\documentclass[11pt]{articlefederico}

\textwidth=5.3in

\usepackage{fullpage}

\usepackage{todonotes,xypic}
\usepackage{amsthm}
\usepackage{amsmath}
\usepackage{amssymb}
\usepackage{color}
\usepackage{enumerate}
\usepackage{bbm}

\usepackage{graphicx}

\newtheorem{theorem}{Theorem}[section]
\newtheorem{lemma}[theorem]{Lemma}
\newtheorem{remark}[theorem]{Remark}
\newtheorem{question}[theorem]{Question}

\newtheorem{definition}[theorem]{Definition}
\newtheorem{proposition}[theorem]{Proposition}

\newtheorem{example}[theorem]{Example}

\makeatletter
\newtheorem*{rep@theorem}{\rep@title}
\newcommand{\newreptheorem}[2]{%
\newenvironment{rep#1}[1]{%
 \def\rep@title{#2 \ref{##1}}%
 \begin{rep@theorem}}%
 {\end{rep@theorem}}}
\makeatother

\newreptheorem{theorem}{Theorem}
\newreptheorem{corollary}{Corollary}


\renewcommand{\AA}{\mathbb{A}}

\newcommand{\kk}{{\mathbbm{k}}}
\renewcommand{\k}{{\mathbbm{k}}}

\renewcommand{\a}{{\textbf{a}}}

\newcommand{\B}{{\cal B}}
\newcommand{\C}{{\cal C}}
\newcommand{\D}{{\cal D}}

\newcommand{\II}{{\mathcal{I}}}

\renewcommand{\L}{{\widetilde{L}}}

\newcommand{\PP}{\mathbb{P}}

\newcommand{\RR}{\mathbb{R}}

\newcommand{\X}{{\widetilde{X}}}
\newcommand{\ZZ}{\mathbb{Z}}

\newcommand{\Tor}{\mathrm{Tor}}

\newcommand{\conv}{\mathrm{conv }}
\newcommand{\supp}{\mathrm{supp }}

\newcommand{\bideg}{\mathrm{bideg \,\, }}
\newcommand{\multideg}{\mathrm{mdeg \,\, }}

\usepackage{pgf}
\usepackage{tikz}
\usetikzlibrary{arrows,automata}

\newcommand{\blue}[1]{#1}
\newcommand{\bblue}[1]{{\color{blue} #1}}

\DeclareMathOperator{\initial}{in}
\DeclareMathOperator{\codim}{codim}

\title{\textsf{The closure of a linear space in a product of lines}}
\author{\textsf{Federico Ardila\footnote{\textsf{San Francisco State University, San Francisco, USA; Universidad de Los Andes, Bogot\'a, Colombia. federico@sfsu.edu}} \qquad Adam Boocher\footnote{\textsf{University of Edinburgh, Edinburgh, UK.  adam.boocher@ed.ac.uk \newline Ardila was partially supported by the US National Science Foundation CAREER Award DMS-0956178 and, the SFSU-Colombia Combinatorics Initiative. Boocher was partially supported by an NSF Graduate Research Fellowship.}
}}}
\date{}
\begin{document}
\maketitle

\begin{abstract}
Given a linear space $L$ in affine space $\mathbb{A}^n$, we study its closure $\widetilde{L}$ in the product of projective lines $(\mathbb{P}^1)^n$. We show that the degree, multigraded Betti numbers, defining equations, and universal Gr\"obner basis of its defining ideal $I(\widetilde{L})$ are all combinatorially determined by the matroid $M$ of $L$.  
We also prove $I(\widetilde{L})$ and all of its initial ideals are Cohen-Macaulay with the same Betti numbers, and can be used to compute the $h$-vector of $M$.
This variety $\widetilde{L}$ also gives rise to two new objects with interesting properties: the \emph{cocircuit polytope} and the \emph{external activity complex} of a matroid.
%
\end{abstract}

\section{\textsf{Introduction.}}
If $L\subset \mathbb{A}^n$ is a $d$-dimensional linear space in affine space $\mathbb{A}^n$ over an infinite field $\kk$,  its usual closure 
in $\mathbb{P}^n$ is one of the simplest projective varieties. 
It  is trivially a projective linear space, and its defining ideal 
is generated by $n-d$ linear forms.
However, this
is only one of many possible closures!  

In this paper we study the next simplest possibility. Choose a frame $F =\{\langle e_1\rangle, \ldots, \langle e_n\rangle \}$ where the $e_i$ form a basis of $n$-space and $\langle\  \rangle$ denotes linear span. This frame gives rise to an embedding $\mathbb{A}^n \hookrightarrow (\PP^1)^n$, and we consider the closure $\widetilde{L} \subset (\PP^1)^n$ of $L$ in this product of projective lines. This case is already quite interesting; several algebraic, combinatorial, and geometric invariants of $\widetilde{L}$ are determined purely combinatorially. There is a matroid $M$ which encodes the relative position of $L$ with respect to the frame $F$. Our main result is that this matroid, which in principle only knows linear information about $L$, actually determines much of the structure of $\widetilde{L}$:

\begin{theorem}
Let $L\subset \mathbb{A}^n$ be a linear space and let $\widetilde{L}$ be its closure via the embedding $\mathbb{A}^n \hookrightarrow (\mathbb{P}^1)^n$.
The following invariants depend only on the matroid of $L$:  the $\mathbb{Z}^n$--multidegree of $\widetilde{L}$, the multigraded Betti numbers of $I(\widetilde{L})$ and all its initial ideals, the number of minimal generators of the defining ideal $I(\widetilde{L})$, and the set of initial ideals of $I(\widetilde{L})$.  Furthermore, $I(\widetilde{L})$ and all of its initial ideals are Cohen-Macaulay with the same Betti numbers. 
\end{theorem}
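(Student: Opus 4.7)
The plan is to exhibit a multihomogeneous generating set of $I(\widetilde{L})$ indexed by the circuits of $M$, prove it is a universal Gröbner basis, identify every initial ideal with the Stanley--Reisner ideal of a matroidal shellable complex, and deduce every listed invariant from this combinatorial picture. For each circuit $C$ of $M$ there is a unique-up-to-scalar linear form $\ell_C = \sum_{j\in C} c_j x_j$ vanishing on $L$. Multihomogenizing it for the $\mathbb{Z}^n$-grading of $(\mathbb{P}^1)^n$ yields the \emph{circuit polynomial}
$$f_C \;=\; \sum_{j\in C} c_j\, x_j \prod_{i\in C\setminus\{j\}} y_i$$
of multidegree $\mathbf{1}_C = \sum_{i\in C} \mathbf{e}_i$. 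I would first prove $I(\widetilde{L}) = (f_C : C \text{ a circuit})$: one inclusion is immediate from $f_C|_{y=1} = \ell_C$ together with the density of $L$ in $\widetilde{L}$; the reverse follows from a standard multihomogenization argument using that the affine ideal of $L$ is generated by the $\ell_C$.

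The central technical step is to show $\{f_C\}$ is a \emph{universal} Gröbner basis. For any term order $\prec$, each $\mathrm{in}_\prec(f_C)$ is a single squarefree monomial $x_{p(C)}\prod_{i\in C\setminus p(C)}y_i$, where $p(C) \in C$ is the element selected by $\prec$; let $J_\prec$ denote the monomial ideal they generate. I would prove $\mathrm{in}_\prec I(\widetilde{L}) = J_\prec$ by comparing multigraded Hilbert series: standard monomials modulo $J_\prec$ should be enumerated combinatorially by matroidal data (pairs of the form (independent set, externally active set)) and matched multidegree by multidegree with a natural basis of the multihomogeneous coordinate ring of $\widetilde{L}$. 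Carrying out this identification amounts to defining the \emph{external activity complex} of $M$ (the new combinatorial object promised in the abstract) and showing its Stanley--Reisner ring has the correct multigraded Hilbert series; I expect this to be the main obstacle, since the combinatorics must work uniformly in $\prec$ and the squarefree initial monomials must be shown to already account for all leading terms in the ideal.

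Once the universal Gröbner basis is in hand, everything else follows. The set of initial ideals is combinatorial because $J_\prec$ depends only on $(M, \prec|_{[n]})$, and the number of minimal generators of $I(\widetilde{L})$ equals the number of circuits of $M$. For Cohen--Macaulayness, I would show that the external activity complex is pure and shellable by a matroid-activity shelling, giving CM-ness of $R/J_\prec$ for every $\prec$; CM-ness of $R/I(\widetilde{L})$ then follows by the standard projective dimension semicontinuity / Auslander--Buchsbaum argument. For the equality of multigraded Betti numbers, I would construct an explicit multigraded free resolution of $R/J_\prec$ supported on the external activity complex and lift it along the Gröbner degeneration to a resolution of $R/I(\widetilde{L})$ of the same multigraded shape. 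The $\mathbb{Z}^n$-multidegree can then be extracted from the top multigraded Betti numbers, completing the matroid invariance of every listed quantity.
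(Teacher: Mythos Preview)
Your overall architecture---introduce the external activity complex, show the homogenized circuit polynomials form a universal Gr\"obner basis, and read off the invariants---matches the paper's. But the order of operations and the specific tools differ in ways that matter, and two of your proposed steps are genuine gaps.

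First, your claim that $I(\widetilde{L}) = (f_C)$ follows from ``a standard multihomogenization argument using that the affine ideal of $L$ is generated by the $\ell_C$'' is precisely the nontrivial direction. For general varieties it is false that homogenizing a generating set of $I(X)$ yields generators of $I(\widetilde{X})$; the paper gives an explicit counterexample. The paper does \emph{not} prove generation first. Instead it computes the multidegree of $\widetilde{L}$ \emph{geometrically}, by intersecting with generic coordinate subspaces $Z_\Delta$, before knowing anything about generators; this gives $\mathrm{mdeg}\,\widetilde{L} = \sum_B t_{b_1}\cdots t_{b_r}$ over bases. Then for any term order $<$ the paper computes the primary decomposition of the candidate initial ideal $\initial_< G$ purely combinatorially (this is where the external activity complex enters), reads off its multidegree, observes it equals $\mathrm{mdeg}\,\widetilde{L} = \mathrm{mdeg}\,\initial_< I(\widetilde{L})$, and concludes $\initial_< G = \initial_< I(\widetilde{L})$ because the smaller ideal is reduced and equidimensional of the same multidegree. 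Your proposed Hilbert series comparison would require an independent description of a basis of the coordinate ring of $\widetilde{L}$, which is circular unless you already know the ideal.

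Second, you propose to obtain Cohen--Macaulayness by proving the external activity complex is shellable. The paper does \emph{not} do this; shellability of $B_<(M)$ is explicitly listed as an open question in the Future Directions section. Instead the paper proves a short lemma: the sequence $x_1-y_1,\ldots,x_n-y_n$ is regular on $S/\initial_< I(\widetilde{L})$, and quotienting by it sends the initial ideal to the Stanley--Reisner ideal of the independence complex $IN(M^*)$. That complex has known Betti numbers via Hochster's formula and Bj\"orner's computation of its homology, and these are concentrated in a single homological degree for each multidegree, so no consecutive cancellation can occur in the Gr\"obner degeneration. This simultaneously yields the explicit Betti numbers, their equality across all initial ideals, and Cohen--Macaulayness (projective dimension $=$ codimension $=r$). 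Your plan to build and lift an explicit cellular resolution supported on $B_<(M)$ might work, but it is substantially harder and not what the paper does.

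(Minor: the paper's matroid convention is dual to yours---it calls the supports of the $\ell_C$ the \emph{cocircuits} of its matroid $M$, which has rank $n-d$. This is harmless but worth noting when comparing.)
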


In fact, when we state this result more precisely in Theorem \ref{longthm cases}, we will see that several important matroid invariants are realized as algebro-geometric invariants of the projective variety $\widetilde{L}$. For instance, we can use $\widetilde{L}$ to compute algebraically the $h$-vector of the matroid of $L$, and the internal activities of its bases under any order. 
This variety also gives rise to two new objects with interesting properties: the \emph{cocircuit polytope} and the \emph{external activity complex} of a matroid.

%

The paper is organized as follows. In Section \ref{sec:closures} we define our main subject of study: the closure of a linear space $L \subset \AA^n$ in $(\PP^1)^n$. We state our main algebraic and combinatorial theorems in Sections \ref{sec:results} and Section \ref{sec:matroidresults} respectively, illustrating them in an example.  Section \ref{sec:related} discusses related work. In Sections \ref{sec:prelimcombin} and \ref{sec:prelimalg} we collect the basic facts from matroid theory and commutative algebra that we will need. In Section \ref{sec:polytope} and \ref{sec:complex} respectively, we introduce and study two combinatorial objects that arise naturally in our work: the cocircuit polytope and the external activity complex of a matroid. We study their combinatorial properties, which may be  interesting in their own right, but also play a key role in the proof of our main result, Theorem \ref{longthm cases}. 
We carry out this proof in Section \ref{sec:proofs}. Finally, in Section \ref{sec:affine} we extend our results 
to affine linear spaces. In that case the invariants of $\widetilde{L}$ are controlled by two matroids, and Las Vergnas's Tutte polynomial of a morphism of matroids plays an interesting role.

\subsection{\textsf{Closures of linear spaces.}}\label{sec:closures}

Choose a frame $F = \{\langle e_1\rangle, \ldots, \langle e_n\rangle \}$ where the $e_i$ form a basis of $\kk^n$ and $\langle\  \rangle$ denotes linear span. This allows us to identify $\mathbb{A}^n$ with $\mathbb{A}^1\times \cdots \times \mathbb{A}^1$. The usual embedding of $\mathbb{A}^1$ into $\mathbb{P}^1$ by adding a single point at infinity then gives us an embedding 
$\AA^n \hookrightarrow (\PP^1)^n$. 

\begin{definition}
If $X$ is an affine variety in affine space $\AA^n$, we let $\X$ denote the scheme-theoretic closure $\X$ of $X$ in $(\PP^1)^n$ induced by this embedding $\AA^n \hookrightarrow (\PP^1)^n$. If $I=I(X)$ is the ideal of polynomials vanishing at $X$, we let $\widetilde{I} = I(\X)$ be the ideal of polynomials vanishing at $\X$. 
\end{definition}
%

For the remainder of the paper, we fix a choice of coordinates, and let $S = \kk[x_1,\ldots,x_n]$. The ideals $I(X) \subset S$ 
and 
$I(\widetilde{X})\subset \kk[x_1,\ldots,x_n, y_1,\ldots, y_n]$ of $\X$ are related by
$$I(\widetilde{X}) := (f^h \ | \ f \in I),$$
where $f^h$ is the total homogenization of $f$, obtained by substituting $x_i$ with $\blue{x_i}/y_i$ in $f$ and clearing denominators. 

For general $X$, it does not suffice to only homogenize a set of generators of $I$ to cut out $\widetilde{X}$.  It seems quite difficult to find a canonical presentation of the ideal $I(\widetilde{X})$, or to determine its algebraic invariants, such as the degree, number of generators, or multigraded Betti numbers. However, we show that when $X=L$ is a linear subspace (resp., an affine subspace), all of these questions have elegant answers in terms of the matroid of $L$ (resp., the morphism of matroids), which encodes the relative position of the subspace $L$ with respect to the chosen frame $F$. Let us describe this matroid in two ways.

Our linear space $L$ corresponds to a point in $\mathrm{Gr}(d,n)$, the Grassmannian of $d$-subspaces of $\kk^n$. The choice of a basis $\{e_1, \ldots, e_n\}$ gives an embedding $\pi: \mathrm{Gr}(d,n) \to \mathbb{P}(\wedge^d \kk^n)$ which maps a vector subspace $L$ of $\kk^n$ to its Pl\"ucker vector $\pi(L)$ in $\mathbb{P}(\wedge^d \kk^n)$. Although the coordinates of $\pi(L)$ depend on the choice of basis, the set of coordinate hyperplanes containing $\pi(L)$ only depends on the frame $F$. This set can be identified with the \emph{matroid} $M$ of $L$: for a $d$-subset $S$ of $[n]$, the hyperplane $H_S$ contains $\pi(L)$ if and only if $[n]-S$ is not a basis of $M$. 

More explicitly, if $A$ is an $(n-d) \times n$ matrix whose rows generate the ideal $I=I(L)$ when regarded as linear forms, then the bases of the matroid $M$ are the linearly independent $(n-d)$-subsets of columns of $A$.\footnote{Sometimes the dual choice is made: one may also associate to $L$ the dual matroid of rank $d$, whose bases are the $d$-subsets $S \subset [n]$ such that $H_S$ contains $\pi(L)$. These two choices are equivalent, and we have chosen the one that is more convenient for us.} This matroid will play a key role in what follows.

\subsection{\textsf{Our results on closures of linear spaces.}}\label{sec:results}

%

Given a linear space $L \subset \AA^n$, we are interested in computing various invariants of the closure $\widetilde{L} \subset (\PP^1)^n$ and its ideal $I(\widetilde{L}) \subset \kk[\blue{x_1}, \ldots, \blue{x_n}, y_1, \ldots, y_n]$.  We consider two gradings of $\kk[\blue{x_1}, \ldots, \blue{x_n}, y_1, \ldots, y_n]$ which make the ideal $I(\widetilde{L})$ homogeneous: the \emph{bidegree} with 
\[
\bideg \blue{x_i} = (1,0), \qquad  \bideg y_i = (0,1) \qquad (1 \leq i \leq n) 
\]
and 
the $\ZZ^n$-\emph{multidegree} 
given by
\[
\multideg \blue{x_i}  = \multideg y_i = e_i \qquad (1 \leq i \leq n) 
\]
where $e_i$ is the $i$th unit vector in $\ZZ^n$.

The following theorem shows that the structure of the matroid $M$ of $L$ determines several important geometric, algebraic, and combinatorial invariants of $I(\widetilde{L})$. Conversely, it offers a geometric context where Tutte's basis activities and other matroid invariants appear very naturally. 
We will discuss in detail all the relevant definitions in Section \ref{sec:prelimcombin}.

\begin{theorem}\label{longthm cases} Let $L\subset \AA^n$ be a $d$-dimensional linear space and let $\L \subset (\PP^1)^n$ be the closure of $L$ induced by the embedding $\AA^n \hookrightarrow (\PP^1)^n$. Let $M$ be the matroid of $L$; it has rank $r=n-d$. Then: 
\begin{enumerate}[(a)]
\item The \emph{homogenized cocircuits} of $I(L)$ minimally generate the ideal $I(\widetilde{L})$.
\item The \emph{homogenized cocircuits} of $I(L)$ form a universal Gr\"obner basis for $I(\widetilde{L})$, which is reduced under any term order.  
\item The $\mathbb{Z}^n$-multidegree of $\widetilde{L}$ is
$\sum\limits_{B} t_{b_1}\cdots t_{b_{r}}$ summing over all \emph{bases} $B = \{b_1,\ldots, b_{r}\}$ of $M$.
\item The bidegree of $\widetilde{L}$ is $t^r h_{M}(s/t)$ where $h_M$ is the \emph{$h$-polynomial} of $M$.
\item There are at most $r!\cdot b$ distinct initial ideals of $I(\widetilde{L})$, where $b$ is the number of bases of $M$.
\item 
The initial ideal $\initial_< I(\widetilde{L})$ is the Stanley--Reisner ideal of the external activity complex $B_<(M^*)$ of the dual matroid $M^*$. Its primary decomposition is:
$$\initial_< I(\widetilde{L}) = \bigcap_{B \textrm{ basis }} \left< \, \blue{x_e} \, : \, e  \in IA_<(B)\,  , \, y_e \, : \, e \in IP_<(B) \right>$$
where $B = IA_<(B)\,  \sqcup \, IP_<(B)$ is the partition of $B$ into internally active and passive elements with respect to $<$.
%
\end{enumerate}
\end{theorem}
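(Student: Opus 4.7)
The plan is to prove part (f) first and deduce the other statements from it, since (f) contains the structural heart of the theorem. Fix a term order $<$ on $\kk[x_1,\ldots,x_n,y_1,\ldots,y_n]$ and let $J_<$ be the ideal generated by the leading terms $\initial_<(C^h)$ of the homogenized cocircuits $C^h$ of $I(L)$. The strategy is: (i) identify $J_<$ combinatorially as the Stanley--Reisner ideal of the external activity complex $B_<(M^*)$, and (ii) upgrade the automatic inclusion $J_< \subseteq \initial_< I(\widetilde{L})$ to equality via a multigraded Hilbert series comparison.

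For step (i), a cocircuit $C$ of $M$ gives a linear form supported on $C$; its total homogenization is a polynomial in the variables $\{x_e,y_e : e \in C\}$ whose monomials are indexed by subsets $T \subseteq C$, with $x_e$ used for $e \in T$ and $y_e$ for $e \in C \setminus T$. A careful case analysis, using the combinatorics of the cocircuit polytope from Section~\ref{sec:polytope} to determine which $T$ is selected by $<$, shows that the set of leading monomials coincides precisely with the generators of the Stanley--Reisner ideal of the external activity complex $B_<(M^*)$ defined in Section~\ref{sec:complex}. The stated primary decomposition of $\initial_< I(\widetilde{L})$ then reflects the facet decomposition of $B_<(M^*)$ indexed by bases, via the partition into internally active and passive elements $IA_<(B)$ and $IP_<(B)$. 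For step (ii), one uses the Cohen--Macaulayness and $h$-vector computation of $B_<(M^*)$ established previously to compute the bigraded Hilbert series of $S/J_<$. Independently, the Hilbert series of $S/\initial_< I(\widetilde{L})$ equals that of $S/I(\widetilde{L})$, which is determined geometrically; matching these series forces $J_< = \initial_< I(\widetilde{L})$.

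The other parts follow readily. Since $J_<$ is generated by the leading terms of the homogenized cocircuits under any term order $<$, parts (a) and (b) follow: the cocircuits form a universal Gr\"obner basis, and reducedness is checked directly from the fact that distinct cocircuits involve distinct leading monomials (again controlled by activities). The minimality of the generators in (a) follows because each $\initial_<(C^h)$ is a minimal generator of the Stanley--Reisner ideal of $B_<(M^*)$. Parts (c) and (d) are obtained by reading the multidegree and bidegree off the primary decomposition in (f): summing the monomials $\prod_{e \in B} t_e$ over the components gives the basis generating polynomial, which specializes to $t^r h_M(s/t)$ once the activity partition is bidegree-collapsed. Part (e) is a count of distinct initial ideals: each is determined by the induced activity structure, bounded by the $r!\cdot b$ combinations of basis and internal ordering.

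The main obstacle will be the combinatorial bookkeeping in step (i): pinning down exactly which term of each homogenized cocircuit is selected by an arbitrary term order, and verifying that these leading monomials are precisely the minimal non-faces of $B_<(M^*)$. This identification is where the Tutte-style activity combinatorics interact with term-order data, and it is also what makes the whole proof work for \emph{every} term order simultaneously, giving both the universal Gr\"obner basis statement and the invariance of Betti numbers across initial ideals.
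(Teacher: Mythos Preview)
Your overall architecture—identify the ideal $J_<$ of leading terms of homogenized cocircuits combinatorially, then upgrade the inclusion $J_< \subseteq \initial_< I(\widetilde{L})$ to equality—matches the paper's. But you have inverted the logical dependence between (c) and (f), and this creates a genuine gap in your step (ii).

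You propose to compute the Hilbert series of $S/I(\widetilde{L})$ ``geometrically'' and match it against that of $S/J_<$. But you do not say how to obtain this Hilbert series (or even the multidegree) of $\widetilde{L}$ without already knowing generators for $I(\widetilde{L})$. The paper resolves this by proving (c) \emph{first}, via a direct intersection-theoretic argument: for each $r$-subset $\Delta \subset [n]$ one intersects $\widetilde{L}$ with the generic linear space $Z_\Delta$ and checks, using only one homogenized cocircuit at a time, that the intersection is a single reduced point when $\Delta$ is a basis and empty otherwise. This gives $\mathrm{mdeg}\,\widetilde{L} = \sum_B t_{b_1}\cdots t_{b_r}$ with no prior knowledge of the full generating set. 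Only then can one compare: $\mathrm{mdeg}\,J_<$ is computed from the primary decomposition of $J_<$ (your step (i), which is Theorem~\ref{thm:primarydecomp}), it agrees with $\mathrm{mdeg}\,\initial_< I(\widetilde{L}) = \mathrm{mdeg}\,I(\widetilde{L})$, and since $J_<$ is reduced and equidimensional the inclusion is forced to be an equality. Your plan to derive (c) from (f) is circular unless you supply an independent route to the Hilbert data of $\widetilde{L}$; invoking ``Cohen--Macaulayness established previously'' does not help, as that is proved in the paper only \emph{after} Theorem~\ref{longthm cases}.

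Two smaller points. First, your description of the monomials of a homogenized cocircuit $C^h$ as indexed by \emph{subsets} $T \subseteq C$ is incorrect: each monomial of $C^h$ has exactly one $x$-variable and the rest $y$'s, so they are indexed by \emph{elements} of $C$. Once one normalizes the term order so that $w(y_i)=0$, the leading term is simply $x_{\min_< D}\prod_{d \in D \setminus \min D} y_d$; no appeal to the cocircuit polytope is needed here (the paper uses that polytope only for the bound in (e)). Second, identifying these leading monomials with the minimal non-faces of $B_<(M^*)$ is then immediate from the definition of the external activity complex, and the primary decomposition follows from Theorem~\ref{thm:simplicialcomplex}; the ``careful case analysis'' you anticipate is the content of that theorem, which the paper establishes separately.
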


\begin{remark}\label{rem:order}
A remark is in order about Theorem \ref{longthm cases}(f). An initial ideal $\initial_< I(\widetilde{L})$ is determined by a term order $<$ on $\kk[\blue{x_1}, \ldots, \blue{x_n}, y_1, \ldots, y_n]$. In turn, $<$ leads to a linear order on $[n]$ which we also denote $<$, is defined by $i<j$ for $i, j \in [n]$ whenever $\blue{x_i}y_j > \blue{x_j}y_i$ (or, more revealingly, $\blue{x_i}/y_i > \blue{x_j}/y_j$) in the term order $<$. This is the linear order $<$ with respect to which $IA(B)$ and $IP(B)$ are defined.
\end{remark}


%

We find it remarkable that the matroid $M$, which only contains linear information about $L$, determines so many invariants of the projective variety $\widetilde{L}$. Perhaps this becomes less surprising once we know (a) and (b), which tell us that the form of the defining equations for $\widetilde{L}$ is determined by the matroid. However, in our proof of (a) and (b), we rely on having already computed (using geometric and combinatorial arguments) the invariants of $\widetilde{L}$ and its degenerations in (c) and (f).

\begin{theorem}\label{thm:bettinumbers}
Let $L$ be a linear $d$-space in $\AA^n$, and $I(\widetilde{L})$ the ideal of its closure in $(\mathbb{P}^1)^n$. 
The non-zero multigraded Betti numbers of $S/I(\widetilde{L})$ 
are precisely:
\[
\beta_{i,\a} (S/I(\widetilde{L})) = 
|\mu(F, \widehat{1})|
\]
for each flat $F$ of $M$, where $i=r-r(F)$, and $\a = e_{[n]-F}$.
Here $\mu$ is the M\"obius function of the lattice of flats of $M$.  Furthermore, all of the initial ideals have the same Betti numbers: 
$$\beta_{i,\a} (S/I(\widetilde{L})) = \beta_{i,\a} (S/(\initial_< I (\widetilde{L})))$$
for all $\a$ and for every term order $<$.  
\end{theorem}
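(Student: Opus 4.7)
The strategy is to first establish the formula for an initial ideal $\initial_< I(\widetilde{L})$ using Hochster's formula, and then transfer the result to $I(\widetilde{L})$ itself via a multigraded Hilbert-series comparison.

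\textbf{Step 1 (Setup).} By Theorem \ref{longthm cases}(f), $\initial_< I(\widetilde{L})$ is the Stanley--Reisner ideal of the external activity complex $B_<(M^*)$ in $\kk[x_1,\ldots,x_n,y_1,\ldots,y_n]$, with explicit primary decomposition indexed by bases of $M$. In particular, its facets can be read off directly. I would apply Hochster's formula to obtain the $\ZZ^{2n}$-graded Betti numbers
\[
\beta_{i,\mathbf{b}}(S/\initial_< I(\widetilde{L})) = \dim_{\kk} \widetilde{H}_{|\mathbf{b}|-i-1}\bigl(B_<(M^*)|_{\supp \mathbf{b}};\kk\bigr)
\]
for squarefree $\mathbf{b}\in\ZZ^{2n}$, and then coarsen to the $\ZZ^n$-grading by summing over the fibers of the projection $\pi:\ZZ^{2n}\to\ZZ^n$ that identifies the degree vectors of $x_i$ and $y_i$.

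\textbf{Step 2 (M\"obius identification).} The central combinatorial claim is that the surviving $\ZZ^n$-graded contributions are precisely those in multidegrees $\mathbf{a}=e_{[n]-F}$ for flats $F$ of $M$, concentrated in homological degree $r-r(F)$, and that the total contribution equals $|\mu(F,\widehat{1})|$. I would prove this by relating the restricted complexes $B_<(M^*)|_W$ to (suspensions of) order complexes of open intervals $(F,\widehat{1})$ in the lattice of flats of $M$, likely using the shellability or explicit acyclic matching on $B_<(M^*)$ developed in Section \ref{sec:complex}. The homological input is Folkman's theorem: the open interval $(F,\widehat{1})$ in the geometric lattice $L(M)$ has reduced homology concentrated in top degree, of dimension $|\mu(F,\widehat{1})|$. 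Restrictions whose support does not project to the complement of a flat should be shown to be acyclic.

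\textbf{Step 3 (Transfer to $I(\widetilde{L})$).} The standard inequality $\beta_{i,\mathbf{a}}(S/I(\widetilde{L})) \leq \beta_{i,\mathbf{a}}(S/\initial_< I(\widetilde{L}))$ and Step 2 together force $\beta_{i,\mathbf{a}}(S/I(\widetilde{L}))=0$ unless $\mathbf{a}=e_{[n]-F}$ and $i=r-r(F)$ for some flat $F$. Since the map $F\mapsto e_{[n]-F}$ is injective on flats, at most one homological degree contributes per multidegree. The multigraded Hilbert series of $I(\widetilde{L})$ and $\initial_< I(\widetilde{L})$ are equal, so their $K$-polynomials $\sum_{i,\mathbf{a}} (-1)^{i} \beta_{i,\mathbf{a}} \mathbf{t}^{\mathbf{a}}$ coincide. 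Reading off the coefficient of $\mathbf{t}^{e_{[n]-F}}$ then forces $\beta_{r-r(F),\,e_{[n]-F}}(S/I(\widetilde{L})) = |\mu(F,\widehat{1})|$, simultaneously giving the claimed Betti-number formula and the equality of Betti numbers between $I(\widetilde{L})$ and each initial ideal.

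\textbf{Main obstacle.} The principal difficulty is the topological computation in Step 2: identifying the restricted subcomplexes of $B_<(M^*)$ with pieces of the lattice of flats of $M$ in order to extract the M\"obius values. The external activity complex depends on the term order through the basis activities, so the argument must ultimately be order-independent on the level of Betti numbers; this independence, while forced a posteriori by Step 3, must be verified directly in Step 2 and will require a careful matroid-theoretic analysis of the facet and restriction structure of $B_<(M^*)$ established in Section \ref{sec:complex}.
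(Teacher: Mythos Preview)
Your Step 3 matches the paper's final argument (concentration in one homological degree per multidegree rules out consecutive cancellations, forcing equality of Betti numbers). But your route to the Betti numbers of the initial ideal is genuinely different from the paper's, and Step 2 is where the gap lies.

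The paper does \emph{not} analyze the topology of restrictions of $B_<(M^*)$ at all. Instead it proves a short lemma: the sequence $\{x_1-y_1,\ldots,x_n-y_n\}$ is regular on $S/\initial_< I(\widetilde{L})$. This is checked directly from the primary decomposition in Theorem \ref{longthm cases}(f), since no associated prime contains both $x_i$ and $y_i$ and no minimal generator is divisible by $x_iy_i$. Modding out by this regular sequence sends $\initial_< I(\widetilde{L})$ to the Stanley--Reisner ideal of the independence complex $IN(M^*)$ in $\kk[x_1,\ldots,x_n]$, and preserves all multigraded Betti numbers. The Betti numbers of $IN(M^*)$ are then read off from Hochster's formula together with Bj\"orner's computation of the homology of matroid independence complexes (Theorem \ref{thm:Bjorner}), which is already stated in Section \ref{sec:complexes} as Theorem \ref{thm:onlynonzeroBetti}. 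No new topological work on $B_<(M^*)$ is needed.

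Your proposal, by contrast, asks for a direct Hochster computation on $B_<(M^*)$ in $2n$ variables, summed over all squarefree $\mathbf{b}\in\ZZ^{2n}$ lying over a fixed $\mathbf{a}\in\ZZ^n$. You would need to show that each restriction $B_<(M^*)|_W$ has the correct reduced homology and that the sum over the fiber collapses to $|\mu(F,\widehat{1})|$. You gesture at shellability or acyclic matchings ``developed in Section \ref{sec:complex},'' but that section only identifies the facets and minimal non-faces of $B_<(M^*)$; no shelling, matching, or homotopy type is established there. So Step 2 as written is a hope, not an argument, and you correctly flag it as the main obstacle. The regular-sequence trick is precisely the missing idea that makes this obstacle disappear: it replaces an unknown complex on $2n$ vertices by a classical one on $n$ vertices whose homology is already in the literature.
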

As a corollary we obtain the following result.  
\begin{theorem}\label{CM}
If $L$ is a linear space then the ideal $I(\widetilde{L})$ and all of its initial ideals are Cohen-Macaulay. 
\end{theorem}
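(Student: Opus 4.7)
The plan is to derive Cohen-Macaulayness of $S/I(\widetilde{L})$ directly from the Betti number computation in Theorem~\ref{thm:bettinumbers} via the Auslander-Buchsbaum formula, where $S = \kk[x_1,\ldots,x_n,y_1,\ldots,y_n]$ is the ambient bihomogeneous coordinate ring of $(\PP^1)^n$, so $\dim S = 2n$.

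First I would pin down the projective dimension of $S/I(\widetilde{L})$. Since $\widetilde{L}\subset(\PP^1)^n$ is $d$-dimensional, we have $\dim S/I(\widetilde{L}) = n+d$ and $\codim I(\widetilde{L}) = r$, which forces $\mathrm{pd}(S/I(\widetilde{L})) \geq r$ by the codimension inequality. On the other hand, Theorem~\ref{thm:bettinumbers} says that $\beta_{i,\a}(S/I(\widetilde{L}))$ can be nonzero only when $i = r - r(F)$ for some flat $F$ of $M$, and $r(F) \geq 0$ gives $i \leq r$. So $\mathrm{pd}(S/I(\widetilde{L})) = r$ exactly.

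Next, by Auslander-Buchsbaum applied to the graded $S$-module $S/I(\widetilde{L})$,
$$\mathrm{depth}(S/I(\widetilde{L})) \;=\; \dim S - \mathrm{pd}(S/I(\widetilde{L})) \;=\; 2n - r \;=\; n + d \;=\; \dim S/I(\widetilde{L}),$$
so $S/I(\widetilde{L})$ is Cohen-Macaulay. For any initial ideal $\initial_< I(\widetilde{L})$, Theorem~\ref{thm:bettinumbers} further guarantees that the multigraded Betti numbers agree with those of $S/I(\widetilde{L})$, hence the projective dimension is again $r$. Since Gr\"obner degeneration preserves the Hilbert function, one has $\dim S/\initial_< I(\widetilde{L}) = n+d$ as well, and a second application of Auslander-Buchsbaum yields Cohen-Macaulayness of every initial ideal.

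All genuine content is already built into the cited Theorem~\ref{thm:bettinumbers}: once the shape and length of the resolution have been controlled, the Cohen-Macaulay conclusion drops out from a short numerical count, so the only real obstacle is in the Betti number theorem itself, not in this corollary. The two numerical inputs used here, namely $\mathrm{pd} = r$ and $\dim = n+d$, combine to match $\dim S = 2n$, which is precisely the Cohen-Macaulay balance.
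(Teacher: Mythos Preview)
Your proof is correct and follows essentially the same route as the paper. Both arguments extract $\mathrm{pd}=r$ from Theorem~\ref{thm:bettinumbers} and match it against $\codim=r$; the paper phrases this as ``codimension equals projective dimension'' and reads off the codimension from the Stanley--Reisner description of the initial ideal (facets of $B_<(M^*)$ have $2n-r$ elements), whereas you phrase it via Auslander--Buchsbaum and read off the dimension $n+d$ geometrically, but these are the same computation.
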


Before stating the relevant definitions in Section \ref{sec:prelimcombin}, we now briefly introduce them while we discuss an example in detail.

\subsubsection{\textsf{An example.}}

\begin{example}\label{ex} Let $L$ be the subspace of $\AA^6$ cut out by the linear ideal 
\[
I= \left< x_1+x_2+x_6, \,\, x_2-x_3+x_5, \,\, x_3+x_4\right>.
\]
This ideal is given by $r=3$ independent equations in $n=6$ variables, and the corresponding linear subspace $L$ has dimension $d=n-r=3$.

 \end{example}

Consider the $r \times n$ matrix whose rows correspond to our 3 equations:
\[
A
= \begin{bmatrix}
  1 & 1 & 0 &  0 & 0 & 1 \\
  0 & 1 & -1 &  0 & 1 & 0 \\
  0 & 0 & 1 &  1 & 0 & 0
 \end{bmatrix}.
 \]
We regard the columns of $A$ 
as a point configuration in 
$\PP^{r-1} = \PP^2$, respectively, as shown in Figure \ref{fig:points}. 
The affine dependence relations among the points correspond to the linear dependence relations among the columns of the matrix. A different generating set for $I$ would give a different point configuration with the same affine dependence relations.

%

\begin{figure}[ht]
 \begin{center}
  \includegraphics[scale=1]{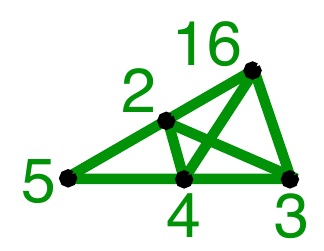}
  \caption{ \label{fig:points} 
A point configuration $A \subset \PP^2$ 
corresponding to the linear ideal $I$.}
 \end{center}
\end{figure}

It is known \cite[Prop. 1.6]{Sturmfels} that the minimal universal Gr\"obner basis of $I$ is given by the \emph{cocircuits} of $I$: the linear forms in $L$ using an inclusion-minimal set of variables.
\[
I=\left< x_1+x_2+x_6, \,\, x_1+x_3-x_5+x_6,\,\, x_1-x_4-x_5+x_6,\,\, x_2-x_3+x_5, \,\,x_2+x_4+x_5, \,\,x_3+x_4\right>
\]
We identify the cocircuits with their support sets 
\[
\D = \{126,\,\, 1356,\,\, 1456,\,\, 235,\,\, 245, \,\,34\} 
\]
They are the complements of the \emph{hyperplanes} $345, 24, 23, 146, 136,$ and $1256$ spanned by subsets of $A$.  Theorem \ref{longthm cases}(a,b) says that the homogenized cocircuits minimally generate $\widetilde{I}$, and give a universal Gr\"obner basis:
\[
\widetilde{I} = \left< \bblue{x_1}y_2y_6+y_1\bblue{x_2}y_6+y_1y_2\bblue{x_6},  \,\, \bblue{x_1}y_3y_5y_6+y_1\bblue{x_3}y_5y_6-y_1y_3\bblue{x_5}y_6+y_1y_3y_5\bblue{x_6}, \ldots,\,\bblue{x_3}y_4+y_3\bblue{x_4}\right>.
\]
%


The \emph{bases} of $A$ are the maximal independent sets of $A$; they correspond to the non-zero maximal minors of $A$, and hence to the 
non-zero Pl\"ucker coordinates of $L$. In Figure \ref{fig:points} they correspond to triples of non-collinear points. 
The $13$ bases of $A$ are 
\[
\B = \{123, 124, 134, 135, 145, 234, 235, 236, 245, 246, 346, 356, 456\}.
\]
Theorem  \ref{longthm cases}(c) then states that the multidegree of $\widetilde{L}$ is 
\[
\textrm{mdeg } \widetilde{L} = t_1t_2t_3 + t_1t_2t_4 + t_1t_3t_4 + 
\cdots
+ t_4t_5t_6.
\]

The \emph{$f$-vector} $f=(1, 6, 14, 13)$ counts the number $f_i$ of independent sets of size $i$ for $i = 1, \ldots, r$. The \emph{$h$-polynomial} is $h_0x^r+h_1x^{r-1}+ \cdots + h_rx^0 = f_0(x-1)^r + f_1(x-1)^{r-1}+\cdots + f_r(x-1)^0$; in this case it equals $x^3+3x^2+5x+4$. Thus Theorem \ref{longthm cases}(d) predicts that 
\[
\bideg \widetilde{L} = 
s^3 + 3s^2t + 5st^2 + 4t^3.
\]

Theorem \ref{longthm cases}(e) says that $I(\widetilde{L})$ has at most $(6-3)! \cdot 13 = 78$ initial ideals. Using the software Gfan \cite{gfan} one can check that it actually has $72$ initial ideals.

Theorem \ref{longthm cases}(f) tells us the primary decomposition of the initial ideal $I_<=in_<I(\widetilde{L})$ with respect to any linear order $<$. If $\bblue{x_1}/y_1 > \cdots > \bblue{x_6}/y_6$, which leads to the natural order $1 < 2 < \cdots < 6$ on the elements of the matroid, we get
%
%
%
\begin{eqnarray*}
I_< &=& \left<\bblue{x_1}y_2y_6, \,\, \bblue{x_1}y_3y_5y_6, \,\,\bblue{x_1}y_4y_5y_6, \,\, \bblue{x_2}y_3y_5, \,\,\bblue{x_2}y_4y_5, \,\,\bblue{x_3}y_4 
\right>
\\\
&=& 
\left<\bblue{x_1},\bblue{x_2},\bblue{x_3}\right> \, \cap \, 
\left<\bblue{x_1},\bblue{x_2},y_4\right> \, \cap \, 
\left<\bblue{x_1},y_3,y_4\right> \, \cap \, 
\left<\bblue{x_1},\bblue{x_3},y_5\right> \, \cap \, 
\left<\bblue{x_1},y_4,y_5\right> \, \cap \, \\
&&
\left<y_2,y_3,y_4\right> \, \cap \, 
\left<y_2,\bblue{x_3},y_5\right> \, \cap \, 
\left<x_2,\bblue{x_3},y_6\right> \, \cap \, 
\left<y_2,y_4,y_5\right> \, \cap \, 
\left<\bblue{x_2},y_4,y_6\right> \, \cap \, \\
&&
\left<y_3,y_4,y_6\right> \, \cap \, 
\left<\bblue{x_3},y_5,y_6\right> \, \cap \, 
\left<y_4,y_5,y_6\right>.
\end{eqnarray*}
We have a primary component $\left<z_b \, : \, b \in B \right>$ for each basis $B$, where $z_b$ equals $\bblue{x_b}$ or $y_b$ depending on whether $b$ is \emph{internally active} or \emph{passive} in $B$. 
For each $b \in B$ consider the cocircuit $D(B,b)$, which consists of the points not on the hyperplane spanned by $B-b$. If $b$ is the smallest element of $D(B,b)$ then $b$ is said to be \emph{active} in $B$, and $z_b=\bblue{x_b}$. Otherwise, $b$ is \emph{passive} in $B$ and $z_b=y_b$.

For example, the basis $235$ contributes the primary component $\left<y_2, \bblue{x_3}, y_5\right>$ because $2$ is internally passive ($2$ is not the smallest element in $D(235,2) = 126$), $3$ is internally active ($3$ is smallest in $D(235,3) = 34$), and $5$ is internally passive ($5$ is not smallest in $D(235,5) = 1456)$.

Note that, from the primary decomposition of $I_<$ above, we can read off the multidegree and bidegree immediately. Each component contributes a monomial, where terms $\bblue{x_i}$ and $y_i$ respectively contribute factors of $s$ and $t$ to the bidegree, and a factor of $t_i$ to the multidegree. Therefore, if one is able to compute this primary decomposition, one immediately gets the list of bases and the $h$-polynomial of the matroid. From this point of view, it is surprising that when we choose different orders $<$ we get the same bidegree.

\begin{figure}[ht]
 \begin{center}
  \includegraphics[scale=.5]{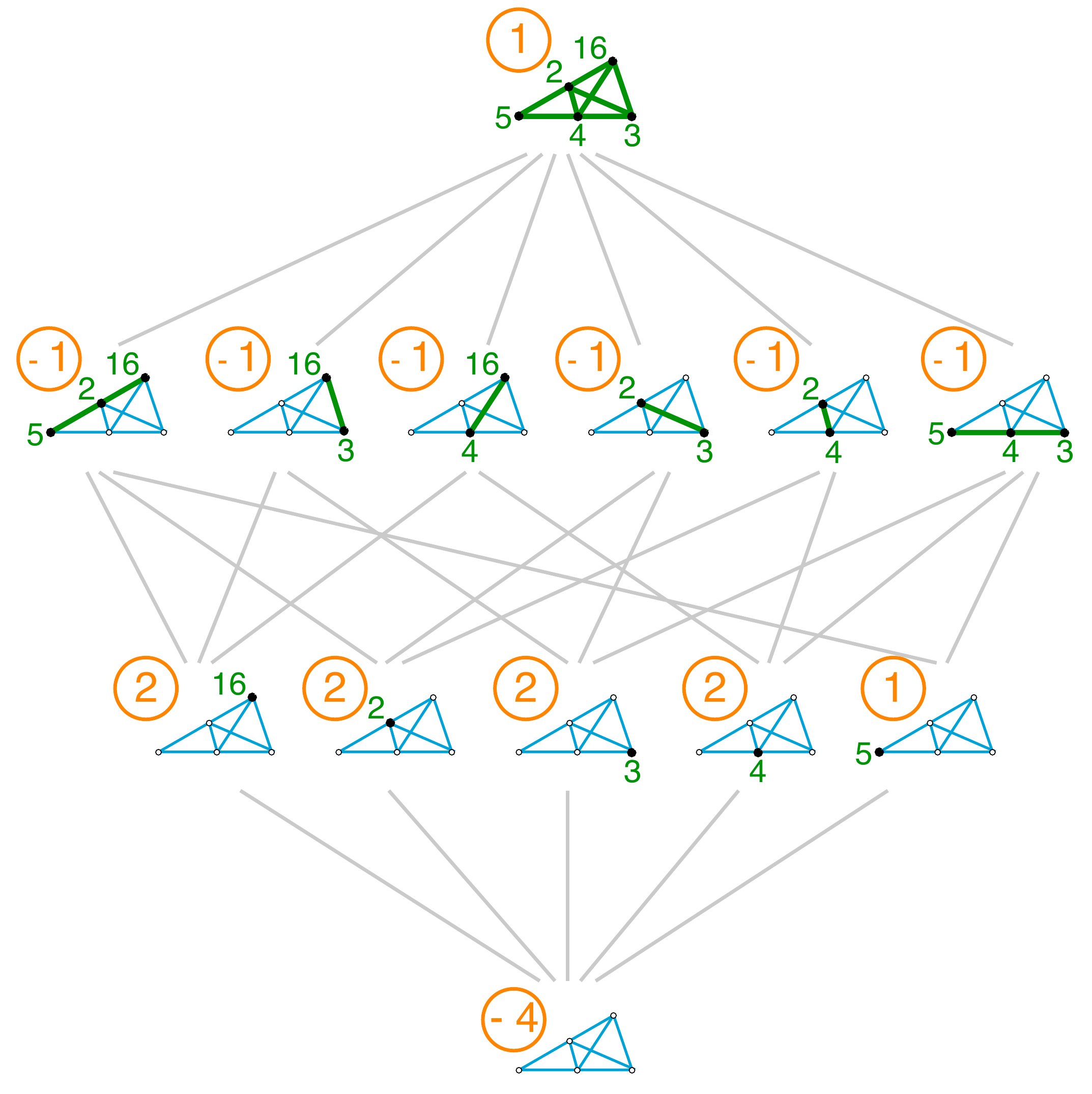}
   \caption{ \label{fig:lattice}
The M\"obius function $\mu(F, \widehat{1})$ of the lattice of flats $M$ encodes the non-zero multigraded Betti numbers of $I(\widetilde{L})$.}
 \end{center}
\end{figure}

\medskip

Theorem \ref{thm:bettinumbers} is best understood pictorially. The \emph{flats} of $M$ are the affine subspaces spanned by the points in $A$. They are partially ordered by inclusion. Recursively define the numbers $\mu(F, \widehat{1})$ by $\mu(\widehat{1}, \widehat{1}) = 1$ and $\mu(F,\widehat{1}) = - \sum_{G > F} \mu(G, \widehat{1})$ for $G \neq \widehat{1}$, where $\widehat{1}$ is the maximal flat. These numbers are shown circled in Figure \ref{fig:lattice}, and they give the non-zero multigraded Betti numbers of $S/I$:
\begin{eqnarray*}
& \beta_{0, \mathbf{\emptyset}} = 1\\
& \beta_{1, \bf{34}} = \beta_{1, \bf{245}} =\beta_{1, \bf{235}} =\beta_{1, \bf{1456}} =\beta_{1, \bf{1356}} =\beta_{1, \bf{126}} = 1 \\
& \beta_{2, \bf{2345}} = \beta_{2, \bf{13456}} = \beta_{2, \bf{12456}} = \beta_{2, \bf{12356}} = 2, \qquad \beta_{2, \bf{12346}} = 1 \\
& \beta_{3,\bf{123456}} = 4
\end{eqnarray*}

\noindent From this we can immediately read off the Betti numbers 
\[
\beta_0 = 1, \quad \beta_1 = 6, \quad \beta_2 = 9, \quad \beta_3 = 4
\]
of $S/I$, as well as the standard $\mathbb{Z}$-graded Betti table of $S/I$, whose $(i,j)$ entry is $\beta_{i,i+j} = \sum\limits_{|\a|=i+j} \beta_{i, \a}$: 
$$\begin{array}{c|cccc}
I & 1 & 6 & 9 & 4\\
\hline 
&1 & - & - & -  \\
& - & 1 & - & -  \\
& - & 3 & 2 & - \\ 
& - & 2 & 7 & 4
 \end{array}$$


In view of Theorem \ref{longthm cases}(b), the equality for $i=1$ in Theorem \ref{thm:bettinumbers} follows from the fact that $I(\widetilde{L})$ is robust; that is, it is minimally generated by a universal Gr\"obner basis. 
For example, the Betti number $\beta_{1, \mathbf{34}}=1$ corresponds to the generator $\bblue{x_3}y_4 + y_3\bblue{x_4}$ of $\widetilde{I}$.



%
%
%

All of these results have generalizations to affine subspaces of $\AA^n$. We delay the precise statements and proofs until Section \ref{sec:affine}.

\subsection{\textsf{Our results on matroids.}}\label{sec:matroidresults}

Our analysis of the closure $\widetilde{L}$ of a linear space $L \subset \AA^n$ in $(\PP^1)^n$ gives rise to some constructions and results in matroid theory  of independent interest.

Fix a basis $e_1, \ldots, e_n$ of $\RR^n$ and let $\Delta = \conv\{e_1, \ldots, e_n\}$ be the standard simplex in $\RR^n$. For each subset $S \subseteq [n]$ consider the indicator vector $e_S  = \sum_{s \in S} e_s$ and the face $\Delta_S = \conv\{e_s \, : \, s \in S\}$ of $\Delta$. 
For a matroid $M$ on $[n]$ consider the \emph{cocircuit polytope}
\[
O_M = \sum_{D \textrm{ cocircuit of }M} \Delta_D,
\]
where the \emph{Minkowski sum} of $P, Q \subset \RR^n$ is $P+Q := \{p + q \, : \, p \in P, q \in Q\}$.

\begin{theorem}\label{thm:polytope}
If a matroid $M$ on $[n]$ has rank $r$, then the cocircuit polytope $O_M$
\begin{enumerate}
\item[(a)]
is given by the equation $\sum_{i=1}^n x_i = D([n])$ and the inequalities $\sum_{i \in S} x_i \leq D(S) \textrm{ for } S \subseteq [n]$, where $D(S)$ is the number of cocircuits intersecting $S$,
\item[(b)]
has dimension $n-c$ where $c$ is the number of connected components of $M$,
\item[(c)]
has the matroid polytope $P_M = \conv\{e_B \, : \, B \textrm{ basis}\}$ as a Minkowski summand,
\item[(d)]
has at most $r! \cdot b$ vertices, where $b$ is the number of bases. 
\end{enumerate}
\end{theorem}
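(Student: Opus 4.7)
I propose to prove the four parts in the order (a), (c), (b), (d), since (b) follows from (c) and (d) is a refinement of the normal-fan argument for (c).

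For part (a), I plan to invoke the theory of generalized permutohedra: each simplex $\Delta_D$ is a generalized permutohedron with submodular defining function $f_D(S) = \mathbf{1}[D\cap S\neq\emptyset]$, and Minkowski addition of generalized permutohedra corresponds to addition of their submodular defining functions. A short case-check confirms each $f_D$ is submodular, and therefore so is the sum $D(S) = \sum_D f_D(S)$, presenting $O_M$ as the generalized permutohedron $\{x \in \RR^n : x([n]) = D([n]),\ x(S)\leq D(S) \text{ for all } S\subseteq [n]\}$.

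The heart of the argument is (c), a normal-fan comparison. For generic $w\in \RR^n$, the $w$-maximal vertex of $O_M$ is $\sum_D e_{\phi_w(D)}$ with $\phi_w(D):=\arg\max_{d\in D} w_d$, while the $w$-maximal vertex of $P_M$ is $e_{B(w)}$ where $B(w)$ is the $w$-maximizing basis (computed greedily). My plan is to establish the identity
\[
B(w) \;=\; \{\phi_w(D) : D \text{ a cocircuit of } M\},
\]
which shows that the cocircuit selection $\phi_w$ determines $B(w)$, so the normal fan of $O_M$ refines that of $P_M$, whence $P_M$ is a Minkowski summand of $O_M$. The inclusion $\supseteq$ is immediate: if $b\in B(w)$ and $\phi_w(C^*(b,B(w)))\neq b$, a basis swap contradicts maximality of $B(w)$. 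For $\subseteq$: if $e=\phi_w(D)\notin B(w)$, the circuit--cocircuit orthogonality $|C(e,B(w))\cap D|\neq 1$ produces some $b'\neq e$ in the intersection, and swapping $b'$ for $e$ yields a basis with strictly larger $w$-weight, again contradicting maximality. Setting up this orthogonality-based exchange cleanly is the main obstacle.

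Parts (b) and (d) then follow quickly. For (b), since every cocircuit of $M$ lies in a single connected component $E_j$, each $\Delta_D$ sits inside $\{x(E_j) = 1,\, x_i = 0 \text{ for } i\notin E_j\}$, so $O_M$ lies in the codimension-$c$ affine subspace $\{x(E_j) = D(E_j) : j=1,\dots,c\}$, yielding $\dim O_M \leq n-c$; the reverse inequality follows from (c) together with the standard fact $\dim P_M = n-c$. For (d), I refine (c): the normal cone of $P_M$ at $e_B$ is $\sigma_B = \{w : w_b > w_e \text{ for all } b\in B,\ e\in C^*(b,B)\setminus b\}$, and inside $\sigma_B$ each $\phi_w(D)$ equals the $w$-largest element of $D\cap B$. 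Consequently, the maximal chambers of $O_M$'s normal fan inside $\sigma_B$ are indexed by the restrictions to $B$ of the linear order on $[n]$ induced by $w$, of which there are at most $r!$, yielding at most $r!\cdot b$ vertices of $O_M$.
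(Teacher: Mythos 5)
Your proof of part (a) is correct and identical in spirit to the paper's: both express $O_M$ as $P_n(\{D(I)\})$ by additivity of the parametrization of generalized permutahedra by submodular functions. Your part (d) is also essentially the paper's argument, phrased with $\arg\max$ rather than $\arg\min$: the key identity $B(w) = \{\phi_w(D) : D \text{ cocircuit}\}$ is exactly the ``blue rule'' of Tarjan that the paper cites, and your derivation of it via the basis exchange axiom and circuit--cocircuit orthogonality (Lemma \ref{lemma:circuitcocircuit}) is valid. Your part (b) differs mildly from the paper (which analyzes edge directions directly), but the inequality $\dim O_M \leq n-c$ is fine; the reverse inequality as you set it up depends on part (c).

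The genuine gap is in part (c). From the identity $B(w) = \{\phi_w(D)\}$ you correctly deduce that the normal fan of $O_M$ refines the normal fan of $P_M$. But this only shows that $P_M$ is a \emph{weak} Minkowski summand of $O_M$ --- i.e.\ a summand of some positive dilate $\lambda O_M$. It does not give the theorem's actual conclusion. Normal fan refinement carries no metric information: for instance, $\Delta$ and $2\Delta$ have identical normal fans, yet $2\Delta$ is not a Minkowski summand of $\Delta$. To conclude that $P_M$ itself is a summand of $O_M$, one must additionally verify an edge-length (or, equivalently, support-function) condition. For generalized permutahedra this is cleanly packaged as: $P_n(\{z'_I\})$ is a summand of $P_n(\{z_I\})$ if and only if $z - z'$ is again submodular. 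The paper closes this gap by proving directly that $q(I) = D(I) - r(I)$ is submodular (via a local submodularity check and a hyperplane argument), which is the step your proposal omits. Note also that, because your part (b) invokes (c) for the lower bound on dimension, that dependency would need to be rerouted (e.g.\ via the paper's edge-direction argument) once (c) is repaired.
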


We are also led to the study of an interesting simplicial complex, which we call the \emph{external activity complex}. Let $M$ be a matroid and let $<$ be a linear order on the ground set $S$. Consider the $2|E|$-element set $\{\blue{x_e}, y_e \, : \, e \in E\}$, and identify subsets and monomials, and write
\[
\blue{x_A}y_B := \{\blue{x_a} \, : a \in A\} \cup \{y_b \, : \, b \in B\}.
\]

Every basis $B$ of $M$ gives rise to a partition of $[n]-B$ into the sets $EA_<(B)$ and $EP_<(B)$ of externally active and passive elements. These sets, which will be defined in Section \ref{sec:prelimcombin}, are  similar and related to the sets $IA_<(B)$ and $IP_<(B)$ of Theorem \ref{longthm cases}(e).

\begin{theorem}
Let $M$ be a matroid and $<$ a linear order on its ground set. There is a simplicial complex $B_<(M)$ on $\{\blue{x_e}, y_e \, : \, e \in E\}$, called the \emph{external activity complex}, such that
\begin{enumerate}
\item
The minimal non-faces are $\blue{x_{\min C}}  y_{C-\min C}$ for each circuit $C$.
\item
The facets of $B_<(M)$ are the sets $\blue{x_{B \cup EP(B)}}  y_{B \cup EA(B)}$ for each basis $B$.
\end{enumerate}
\end{theorem}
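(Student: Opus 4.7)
My plan is to take property (1) as the definition of $B_<(M)$, declaring it to be the simplicial complex on $\{x_e, y_e : e \in E\}$ whose minimal non-faces are the sets $x_{\min C}\,y_{C - \min C}$ as $C$ ranges over circuits of $M$. Then (1) is automatic and the task is to prove (2): that the facets are precisely $\sigma_B := x_{B \cup EP(B)}\,y_{B \cup EA(B)}$ as $B$ ranges over bases of $M$. I would split this into three subclaims: (I) each $\sigma_B$ is a face; (II) each $\sigma_B$ is inclusion-maximal; (III) every face is contained in some $\sigma_B$. Together with the observation that $|\sigma_B| = 2|B| + |EA(B)| + |EP(B)| = n + r$ is constant, these three statements identify the facets as claimed.

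For (I), constancy of $|\sigma_B|$ together with the recovery formula $B = \{e : x_e \in \sigma_B\} \cap \{e : y_e \in \sigma_B\}$ yields pairwise incomparability. To show $\sigma_B$ contains no minimal non-face, assume for contradiction $x_{\min C}\,y_{C - \min C} \subseteq \sigma_B$, let $e := \min C$, and induct on $k := |C \cap EA(B)|$. The base case $k = 0$ forces $C - e \subseteq B$, hence $C = C(B, e)$, placing $e$ in $EA(B)$ and contradicting $e \in B \cup EP(B)$. For $k \geq 1$, pick $f \in C \cap EA(B)$; since $f > e$ and $C(B, f) \subseteq B \cup \{f\}$ has $\min C(B, f) = f$, circuit elimination on $C$ and $C(B, f)$ at $f$ yields a circuit $C'$ containing $e$ with $\min C' = e$, $C' - e \subseteq B \cup EA(B)$, and $|C' \cap EA(B)| \leq k - 1$, closing the induction. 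For (II), given $z \notin \sigma_B$ the fundamental circuit $C(B, e)$ (where $z \in \{x_e, y_e\}$) provides a forbidden monomial inside $\sigma_B \cup \{z\}$: if $z = x_e$ with $e \in EA(B)$ then $x_e\,y_{C(B, e) - e} \subseteq \sigma_B \cup \{z\}$, and the case $z = y_e$ with $e \in EP(B)$ is symmetric, using that $\min C(B, e) \in B$.

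The harder step is (III). Given a face $\tau$ with $X = \{e : x_e \in \tau\}$ and $Y = \{e : y_e \in \tau\}$, one first notes that $X \cap Y$ is independent, for any circuit contained in it would produce a forbidden monomial inside $\tau$. My plan is to choose a basis $B \supseteq X \cap Y$ that is extremal for a compound objective --- first maximize $|B \cap Y|$, then minimize $|B \cap X|$, then apply an index-based tiebreaker --- and argue via circuit exchange that any violation of the desired properties (some $e \in X \setminus B$ lying in $EA(B)$, or some $e \in Y \setminus B$ lying in $EP(B)$) either permits a swap $B \to B - f + e$ strictly improving the objective (contradicting extremality) or directly exhibits a circuit $C$ with $x_{\min C}\,y_{C - \min C} \subseteq \tau$ (contradicting $\tau$ being a face). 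With such $B$ in hand, $\tau \subseteq \sigma_B$, completing the identification.

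I expect the main obstacle to lie in (III): calibrating the compound objective and its tiebreaker so that every possible bad swap is strictly improving in some layer. Swaps involving elements of $E - (X \cup Y)$ are the subtlest, since they are invisible to the first two priorities and must be caught either by the tiebreaker or by directly extracting a circuit certificate from the fundamental circuit $C(B, e)$ that witnesses the violation.
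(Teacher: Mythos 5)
Your reduction to three subclaims (I), (II), (III) is sound, and the constancy-of-size and recovery observations correctly let you conclude the facet identification once all three are established. Claim (I), which you prove by induction on $|C \cap EA(B)|$ via strong circuit elimination, is a valid argument and genuinely different from the paper's: the paper instead splits according to whether $\min C$ lies in $B$ or in $EP(B)$ and in each case uses the fundamental cocircuit together with the circuit--cocircuit lemma $|C \cap D| \neq 1$ to produce an element of $C - \min C$ outside $B \cup EA(B)$. Your induction trades that lemma for strong elimination and works fine. Claim (II) is handled the same way in both.

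The genuine gap is (III), which you explicitly leave unfinished. The paper closes this step not by a bespoke optimization but by invoking Crapo's partition theorem (Theorem \ref{thm:Crapo}): every $T \subseteq E$ lies in exactly one interval $[B - IA(B),\, B \cup EA(B)]$, i.e.\ can be written $T = B \cup E' - I$ with $E' \subseteq EA(B)$ and $I \subseteq IA(B)$. Applied to $Y$ (the $y$-support of your face $\tau$), this produces a basis $B$ with $Y \subseteq B \cup EA(B)$ \emph{and} the crucial extra property $B - Y \subseteq IA(B)$. Then if some $s \in X$ were externally active for $B$, the fundamental circuit $C(B,s)$ would have $\min C(B,s) = s \in X$, while every $d \in C(B,s) - s \subseteq B$ must lie in $Y$: otherwise $d \in IA(B)$, whence $d = \min D(B,d)$ together with $s \in D(B,d)$ forces $s > d$, contradicting $s = \min C(B,s)$. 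That would exhibit a forbidden monomial inside $\tau$. Your proposed compound objective with tiebreakers is essentially an attempt to rediscover Crapo's canonical basis from scratch; it may ultimately be made to work, but the calibration issues you flag (especially swaps touching $E - (X\cup Y)$) are exactly what Crapo's theorem is designed to sidestep, and as written the proposal does not constitute a proof of (III).
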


%
%

\subsection{\textsf{Motivation and related results.}}\label{sec:related}

The closures we study are similar to the reciprocal varieties of linear spaces considered in \cite{PS}. A reciprocal variety may be thought of as the closure arising from the homogenization $x_i \mapsto 1/y_i$. Proudfoot and Speyer proved that the reciprocal variety $L^\perp$ of a linear space has a universal Gr\"obner basis defined by circuit polynomials.  They also proved that its degree can be computed in terms of the Tutte polynomial evaluated at $(1,0)$.  In one sense, our results can be viewed as a proper homogenization of the reciprocal variety.  Indeed, upon setting $\blue{x_i} = 1$ we obtain precisely the equations obtained in \cite{PS}.  Upon substitution the minimality of the generators is not preserved, nor is the property that all monomial degenerations have the same Betti numbers.  The bidegree of $I(\widetilde{L})$ is a homogenised $h$-polynomial whereas the degree of the reciprocal variety is equal to the constant term $h(0)$.  Thus it seems that the added homogeneity enjoyed by the closure in $(\mathbb{P}^1)^n$ captures more of the matroid structure.

We originally became interested in closures of linear spaces because of this universal Gr\"obner basis property and a well-known result for toric ideals:  If $X$ is any affine toric variety, then its closure $\widetilde{X}$ in $(\mathbb{P}^1)^n$ is called the Lawrence lifting of $X$.  If $X$ is toric then $I(\widetilde{X})$ is minimally generated by a universal Gr\"obner basis (see \cite{Sturmfels}).  In general, we wanted an answer to the following 

\begin{question}If $X\subset \mathbb{A}^n$ is a variety, and $I(\widetilde{X})$ is the ideal of its closure in $(\mathbb{P}^1)^n$, when does 
$$\beta_i (S/I(\widetilde{X})) = \beta_i (S/\initial_< I(\widetilde{X})) \mbox{ for all $<$ }?$$
\end{question}
Sturmfels' result says that equality holds if $X$ is defined by a toric ideal and $i = 1$, and our result says that if $L$ is a linear space then equality holds for all $i$.

Originally we hoped that such a result might hold more generally, but little is true.  Even for toric ideals, the question has a negative answer if $i\geq 2$ and the following example 
shows that even for $i=1$ the situation is quite subtle.   
This example also illustrates that, contrary to the case of closures in $\mathbb{P}^n$, there is no simple numerical relationship between the number of generators of $I(X)$ and $I(\widetilde{X})$, even in terms of Gr\"obner bases.

\begin{example}\label{counterex}  Let $I = I(X) = (x_1 + x_2 + x_3, \,\, x_1 + x_3 + x_4, \,\, x_1^2 + x_2^2+ x_1x_4)$
$$\begin{array}{|r|ll|}
\hline
&  I(X)\,\,\, &  I(\widetilde{X}) \\
\hline
\mbox{number of generators}  &  3 & 12 \\
\mbox{size of a reduced Gr\"obner basis}  & 3 & 14 \mbox{ or } 15\\
\mbox{size of a universal Gr\"obner basis}& 8 & 21\\
\hline
\end{array}$$
\end{example}
The closures we study also arise in work of Aholt, Sturmfels, and Thomas \cite{AST} where they study maps induced by products of linear projections $V \to V/V_i$ when $\dim V = 4$.   Recently, Li has extended the results of \cite{AST} to arbitrary vector spaces.  In \cite{Li} he computes the defining ideal and multi-degree for the closure of the image of such maps and proves that they are determined combinatorially.

Ideals minimally generated by universal Gr\"obner bases, called \emph{robust ideals} in \cite{BR}, are by no means a common occurrence.  Even in the toric case, this condition is very strong, yet a complete classification is unknown.  Nonetheless, robust ideals have cropped up in many classical situations; see \cite{B, BR, CGD, CHT,PS, SZ}.


\section{\textsf{Preliminaries from matroid theory.}}\label{sec:prelimcombin}

The toolkit of matroid theory is ideally suited to study the geometric and algebraic invariants in this project. Matroid theory can be approached from many equivalent points of view. This can make the theory confusing at first, as different papers often use very different definitions of a matroid. However, in the long run, the existence of these ``cryptomorphic" definitions is an extremely powerful feature of the theory.
This project illustrates this point very well; many different matroid theoretic concepts appear naturally, as Example \ref{ex} shows. In this section we introduce these concepts in more detail; they will play a fundamental role in what follows. For a more thorough introduction, see \cite{Bj, Oxley}.


\subsection{\textsf{One definition of a matroid.}}

\begin{definition}
A \emph{matroid} $M=(E, \mathcal{I})$ consists of a ground set $E$ and a family $\mathcal{I}$ of sets of $E$, called the \emph{independent sets} of $M$, which satisfy the following axioms:

\noindent
(I1) The empty set is independent.

\noindent
(I2) A subset of an independent set is independent.

\noindent
(I3) If $X$ and $Y$ are independent and $|X|<|Y|$, then there exists $y \in Y-X$ such that $X \cup y$ is independent.
\end{definition}

Matroid theory can be thought of as a combinatorial theory of independence. The prototypical example is the family of \emph{linear} or \emph{realizable matroids}, which arise from linear independence. If $E$ is a set of vectors in a vector space $V$, then the linearly independent subsets of $E$ form a matroid. 

In a matroid $M$, a \emph{circuit} is a minimal dependent set. 
A \emph{basis} is a maximal independent set. All bases of $M$ have the same size, which is called the \emph{rank} $r(M)$ of $M$. Similarly, all maximal independent subsets of any set $S\subseteq E$ have the same size, which is called the rank $r(S)$. There are equivalent definitions of matroids in terms of circuits, bases, and rank functions, among others.

In our running Example \ref{ex}, the bases and circuits are:
\begin{eqnarray*}
\B &=& \{123, 124, 134, 135, 145, 234, 235, 236, 245, 246, 346, 356, 456\},\\
\C &=& \{16, 125, 256, 345, 1234, 2346\}.
\end{eqnarray*}
The independent sets are the subsets of the bases.

\subsection{\textsf{The $f$-vector, $h$-vector, and Tutte polynomial.}}

The \emph{$f$-vector} $f_M=(f_0, \ldots, f_r)$ of a matroid $M$ records the number $f_i$ of independent sets of size $i$ for $0 \leq i \leq r$. This information is equivalently recorded in the \emph{$h$-polynomial}
\[
h_M(x) = h_0x^r+h_1x^{r-1}+ \cdots + h_rx^0 = f_0(x-1)^r + f_1(x-1)^{r-1}+\cdots + f_r(x-1)^0.
\]
The reverse polynomial $h_rx^r+\cdots + h_0$ is known as the \emph{shelling polynomial} of $M$. The vector $h_M = (h_0, \ldots, h_r)$ is called the \emph{$h$-vector} of $M$. 

In our running Example \ref{ex} we already saw that there are 13 bases. All sets of size 0, 1, and 2 are independent except for the pair 16, so the $f$-vector is $(1, 6, 14, 13)$. The $h$-polynomial is then
\[
h_M(x) = (x-1)^3 + 6(x-1)^2+14(x-1)+13 = x^3 + 3x^2 + 5x + 4.
\]

The $h$-polynomial is an evaluation of the most important enumerative invariant of a matroid, the \emph{Tutte polynomial}:
\[
T_M(x,y) = \sum_{A \subseteq E} (x-1)^{r-r(A)} (y-1)^{|A|-r(A)}.
\]
A straightforward computation shows that 
\[
h_M(x) = T_M(x,1).
\]

\subsection{\textsf{Duality and minors.}}
If $\B$ is the collection of bases of a matroid $M$ on $E$, then $\B^*:=\{E-B \, : \, B \in \B\}$ is also the collection of bases of a matroid, called the \emph{dual matroid} $M^*$. If $M$ is the matroid of a set $A$ of $n$ vectors which generate $\kk^d$, then one can find a set of $n$ vectors which generate $\kk^{n-d}$ whose matroid is $M^*$. Figure \ref{fig:dual} shows a point configuration dual to the one of Example \ref{ex}. The reader may check that the bases of $A^*$ are precisely the complements of the bases of $A$.

A \emph{circuit} of $M^*$ is called a \emph{cocircuit} of $M$. It can also be characterized as a minimal set $D$ whose removal decreases the rank of $M$; i.e., $r(E-D)<r$. The cocircuits of $A$ in Example \ref{ex} are
\[
\D = \{34, 126, 235, 245, 1356, 1456\}.
\]
They are the complements of the hyperplanes spanned by subsets of $A$. 
In $A^*$ they are the circuits. 




\begin{figure}[ht]
 \begin{center}
  \includegraphics[scale=1]{dualvectorconfig}
  \qquad \qquad \qquad 
  \includegraphics[scale=1]{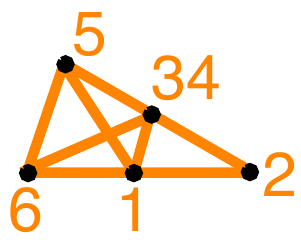} 
  \caption{ \label{fig:dual} 
The point configuration $A \subset \PP^{r-1} = \PP^2$ 
and a dual configuration $A^* \subset \PP^{n-r-1} = \PP^2$.}
 \end{center}
\end{figure}

%
%


The following technical lemma will be very useful to us.

\begin{lemma}\cite{Oxley}\label{lemma:circuitcocircuit}
If $C$ is a circuit and $D$ is a cocircuit of $M$, then $|C \cap D| \neq 1$.
\end{lemma}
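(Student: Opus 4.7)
The plan is to argue by contradiction, converting the statement into one about hyperplanes and closures. Assume for contradiction that $C \cap D = \{e\}$ for some single element $e \in E$.

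First I would recall the standard cryptomorphism between cocircuits and hyperplanes: a set $D \subseteq E$ is a cocircuit if and only if its complement $H := E - D$ is a hyperplane of $M$, i.e., a flat of rank $r(M) - 1$. This is the right reformulation because hyperplanes are closed sets, which lets us exploit the circuit property that $e$ is in the closure of $C - e$. This fact is alluded to in the running example of the paper (``they are the complements of the hyperplanes spanned by subsets of $A$'') and is a standard matroid cryptomorphism.

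Next, I would translate the assumption into the language of $H$. Since $C \cap D = \{e\}$, we have $e \in D$, so $e \notin H$; and $C - e \subseteq E - D = H$. Now I would invoke the circuit axiom that $e \in \mathrm{cl}(C - e)$, together with monotonicity of closure: since $C - e \subseteq H$, we get $e \in \mathrm{cl}(C-e) \subseteq \mathrm{cl}(H) = H$, where the last equality uses that $H$ is a flat. This contradicts $e \notin H$ and completes the proof.

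The main obstacle is really just a matter of bookkeeping: one has to be comfortable switching between the circuit/cocircuit axiomatization and the flat/hyperplane axiomatization. Since all of this is standard (and indeed is catalogued in \cite{Oxley}, to which the paper defers), there is no serious difficulty. An alternative route would be to use the strong circuit elimination axiom directly, or to count: if $|C \cap D| = 1$, then $C$ would be an independent set in the contraction $M/(E-D-e)$ that also contains the loop $e$, which is absurd. I would prefer the closure argument because it is the cleanest and makes the geometric content transparent.
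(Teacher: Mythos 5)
The paper does not prove this lemma itself; it simply cites Oxley, so there is no in-paper argument to compare against. Your proof is correct and is in fact the standard textbook argument (cocircuit complement is a hyperplane, hence a flat; $e \in \mathrm{cl}(C-e) \subseteq \mathrm{cl}(H) = H$ contradicts $e \notin H$), so it matches what one finds in Oxley.
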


If $M$ is a matroid on $E$ and $A \subset E$ then there are matroids $M\backslash A = M|_{E-A}$ and $M/A$ on $E-A$, called the \emph{deletion} and \emph{contraction} of $A$ in $M$,  whose independent sets are
\begin{eqnarray*}
\II(M\backslash A) &=& \{I \in \II(M) \, : \, I \subseteq E-A\}\\
\II(M/A) &=& \{I-B_A \, : I \in \II(M), B_A \subseteq I\}
\end{eqnarray*}
where $B_A$ is a basis of $A$. Any sequence of deletions and contractions commutes. A \emph{minor} of $M$ is any matroid obtained from $M$ by deletions and contractions.

Deletion and contraction are dual operations:
\[
(M/A)^* = M^* \backslash A.
\]
If $M$ comes from a set $S$ of vectors in a vector space $V$, then $M \backslash A$ corresponds to deleting the vectors in $A$, while $M/A$ corresponds to the images of those vectors in $V/\textrm{span}(A)$.

\subsection{\textsf{The matroid of a linear ideal.}}

Fix a choice of a standard basis for $\kk^n$. Let $L$ be an $r$-dimensional linear subspace of $\kk^n$ and let $I(L) \subset \kk[x_1,\ldots, x_n]$ be its defining linear ideal. 
There is one particularly useful generating set for $I(L)$, which we now describe.  For each linear form $f$ in $I(L)$ consider its \emph{support} $\supp(f) \subseteq [n]$ consisting of those $i$ such that $x_i$ has a nonzero coefficient in $f$. Among these, consider the set $\D$ of inclusion-minimal supports; these are called the \emph{cocircuits} of $I(L)$. They are the cocircuits of a matroid $M(L)$, called the \emph{matroid of $L$}.\footnote{Sometimes the dual convention is chosen, and the matroid of $L$ is defined to be the dual matroid $M(L)^*$.}
Notice that for each cocircuit $D$ there is a unique linear form $f$ (up to scalar multiplication) in $I(L)$ with $\supp(f)=D$, so there is no ambiguity in calling this form $f$ a cocircuit as well.  

\begin{proposition}\label{prop:Grobnerlinear}\cite[Prop. 1.6]{Sturmfels}
The cocircuits of the linear ideal $I(L)$ form a universal Gr\"obner basis for $I(L)$.
\end{proposition}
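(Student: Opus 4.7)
The plan is to fix an arbitrary term order $<$ and show that the set of cocircuit forms is a Gröbner basis of $I(L)$ with respect to $<$. Because $I(L)$ is generated by linear forms, a standard reduction shows that a set $G$ of linear forms in $I(L)$ is a Gröbner basis if and only if the set of leading variables $\{\mathrm{lt}_<(g) : g \in G\}$ equals the set $\{\mathrm{lt}_<(h) : 0 \neq h \in I(L) \cap S_1\}$. So it suffices to prove the following: for every $j \in [n]$ which occurs as the $<$-maximum of $\supp(h)$ for some nonzero linear form $h \in I(L)$, there exists a cocircuit of $I(L)$ whose $<$-maximum support element is also $j$.

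Given such a $j$, consider the family
\[
\mathcal{F}_j = \bigl\{h \in I(L) \cap S_1 \,:\, j \in \supp(h) \subseteq \{i \in [n] : i \leq_< j\}\bigr\},
\]
which is nonempty by hypothesis, and choose $h \in \mathcal{F}_j$ with $|\supp(h)|$ minimal. I claim $\supp(h)$ is a cocircuit of $M(L)$. Since cocircuits are exactly the inclusion-minimal supports of linear forms in $I(L)$, $\supp(h)$ must contain a cocircuit $D$; if $D = \supp(h)$, we are done. Otherwise let $c \in I(L)$ be the (scalar-unique) cocircuit form with $\supp(c) = D \subsetneq \supp(h)$. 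If $j \in D$, then $c \in \mathcal{F}_j$ has strictly smaller support than $h$, contradicting the choice of $h$. If $j \notin D$, pick any $k \in D \subseteq \supp(h)$ and let $h' = h - \lambda c$, where $\lambda$ is the scalar chosen to kill the coefficient of $x_k$. Then $\supp(h') \subseteq \supp(h) \setminus \{k\}$, and since $j \notin \supp(c)$, the coefficient of $x_j$ in $h'$ equals that in $h$ and is still nonzero; hence $h' \in \mathcal{F}_j$, again contradicting minimality.

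Therefore $\supp(h)$ is a cocircuit whose $<$-maximum is $j$, so its associated cocircuit form has leading variable $x_j$, as required. Since $<$ was arbitrary, the cocircuits of $I(L)$ form a universal Gröbner basis. The only step that requires outside input is the initial reduction from arbitrary polynomials in $I(L)$ to linear forms, which rests on the standard fact that for an ideal generated in degree one the initial ideal is generated by the initial terms of its degree-one part; beyond that, the argument is a direct support-minimization and I do not anticipate a significant obstacle.
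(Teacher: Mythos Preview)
Your argument is correct. The reduction to degree one is standard (a linear ideal has a Gr\"obner basis of linear forms, so $\initial_< I(L)$ is generated by the leading variables of $I(L)\cap S_1$), and your support-minimization in $\mathcal{F}_j$ cleanly produces a cocircuit with the required leading variable; the two cases $j\in D$ and $j\notin D$ are handled properly, and in the second case $\supp(c)=D\subseteq\supp(h)$ guarantees $\supp(h')\subseteq\supp(h)\setminus\{k\}$ with $j$ still the $<$-maximum.

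As for comparison: the paper does not give its own proof of this proposition but simply cites it from Sturmfels' \emph{Gr\"obner Bases and Convex Polytopes}. Sturmfels' argument there is close in spirit to yours: he observes that for any term order the \emph{reduced} Gr\"obner basis of a linear ideal consists of linear forms, and then shows each such form must be a cocircuit (otherwise one could reduce it against the cocircuit contained in its support, contradicting reducedness). Your version inverts the logic slightly---rather than starting from the reduced Gr\"obner basis and showing its elements are cocircuits, you start from an arbitrary leading variable and build a cocircuit realizing it---but the core mechanism (support minimization via subtraction of a contained cocircuit) is the same.
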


Linear matroids are precisely the matroids of linear ideals. As we explained in Example \ref{ex}, if $B$ is a matrix whose rows generate $I(L)$, one may easily check that the linear matroid on the columns of $B$ equals the matroid of $L$.

Matroid duality can then be seen as a generalization of duality of subspaces. Our chosen basis for $\kk^n$ determines a dual basis for the dual vector space $(\kk^n)^*$.
If $L^\perp \subset (\kk^n)^*$ is the orthogonal complement of our vector space $L$, then the matroid of $L^\perp$ is dual to the matroid of $L$.

\subsection{\textsf{Basis activities.}}
\begin{proposition}\cite{Crapo}
Given a basis $B$ and an element $x \notin B$, there is a unique circuit $C=C(B,x)$ contained in $B \cup x$. It is called the \emph{fundamental circuit} of $B$ and $x$, and is given by:
\[
C(B,x) = \{y \in E \, : \, (B \cup x) -  y \textrm{ is a basis}\}
\]
Notice that $x \in C(B,x)$.

Given a basis $B$ and an element $y \in B$, there is a unique cocircuit $D=D(B,y)$ contained in $E-B \cup y$. It is called the \emph{fundamental cocircuit} of $B$ and $y$, and is given by:
\[
D(B,y) = \{x \in E \, : \, (B-y) \cup x \textrm{ is a basis}\}
\]
Notice that $y \in D(B,y)$.
\end{proposition}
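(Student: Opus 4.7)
The plan is to prove the circuit statement directly from the matroid axioms (I1)--(I3), then deduce the cocircuit statement by matroid duality. For existence of $C(B,x)$, I would note that $B \cup x$ has size $r+1 > r(M)$, so by maximality of $B$ the set $B \cup x$ is dependent and therefore contains some circuit; any such circuit must contain $x$, because otherwise it would be a dependent subset of the independent set $B$, violating (I2). In particular this immediately gives $x \in C(B,x)$.

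For uniqueness, I would invoke the circuit elimination property (a standard consequence of (I1)--(I3)): if $C_1, C_2 \subseteq B \cup x$ were two distinct circuits, both would contain $x$ by the argument above, and then $(C_1 \cup C_2) - x$ would be a nonempty dependent subset of $B$, contradicting (I2). To establish the explicit description $C(B,x) = \{y : (B \cup x) - y \text{ is a basis}\}$, I would argue both inclusions. If $y \in C(B,x)$, then $(B \cup x) - y$ is a set of size $r$ that cannot contain a circuit, since the only candidate $C(B,x)$ is excluded from $(B \cup x) - y$; hence $(B \cup x) - y$ is independent and therefore a basis. Conversely, if $(B \cup x) - y$ is a basis, cardinality forces $y \in B \cup x$, and if $y \notin C(B,x)$ then $C(B,x) \subseteq (B \cup x) - y$ would exhibit a circuit inside a basis, a contradiction.

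Finally, I would deduce the cocircuit statement by applying the proved circuit statement to the dual matroid $M^*$, using that bases of $M^*$ are the complements of bases of $M$, and that cocircuits of $M$ are by definition the circuits of $M^*$. With $B^* := E - B$ a basis of $M^*$ and $y \in B$ corresponding to $y \notin B^*$, the fundamental circuit $C(B^*, y)$ of $M^*$ is the desired cocircuit $D(B,y)$; translating the basis-exchange characterization through complementation yields $D(B,y) = \{x : (B-y) \cup x \text{ is a basis of } M\}$, and $y \in D(B,y)$ is inherited from $y \in C(B^*, y)$. The only nontrivial background fact used is circuit elimination; deriving it cleanly from (I1)--(I3) is the main (and standard) obstacle, and everything else is straightforward bookkeeping.
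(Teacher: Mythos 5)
The paper does not prove this proposition; it simply cites Crapo and treats the fundamental circuit and cocircuit as standard matroid facts. Your argument is correct and is essentially the textbook proof: existence follows because $B\cup x$ is dependent and any circuit in it must contain $x$; uniqueness follows from circuit elimination (if $C_1\neq C_2$ were both circuits in $B\cup x$, both contain $x$, and elimination produces a circuit inside $(C_1\cup C_2)-x\subseteq B$, contradicting independence of $B$); the explicit description and the cocircuit half then follow by the uniqueness just proved and by duality, respectively. You correctly identify circuit elimination as the one nontrivial ingredient that would itself need a derivation from (I1)--(I3). The rest is routine bookkeeping, and your duality translation $(B^*\cup y)-x$ is a basis of $M^*$ iff $(B-y)\cup x$ is a basis of $M$ is exactly right.
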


Consider a linear order $<$ on the ground set of $M$. Let $B$ be a basis of $M$. We say that an element $e \notin B$ is \emph{externally active} if it is the smallest element in $C(B,e)$, and it is \emph{externally passive} otherwise. Let $EA_<(B)$ and $EP_<(B)$ be the sets of externally active and externally passive elements with respect to $B$ and $<$.\footnote{When the choice of the order $<$ is clear, we will omit the subscript and write simply $EA(B)$ and $EP(B)$.}

Similarly, we say that an element $i \in B$ is \emph{internally active} if it is the smallest element in $D(B,i)$, and it is \emph{internally passive} otherwise. We write $IA_<(B)$ and $IP_<(B)$ for the sets of internally active and internally passive elements with respect to $B$ and $<$. Notice that matroid duality reverses internal and external activity: $IA_{M,<}(B) = EA_{M^*,<}([n]-B)$ and $IP_{M,<}(B) = EP_{M^*,<}([n]-B)$. 

We will need the following result by Crapo:

\begin{theorem}\cite{Crapo}\label{thm:Crapo}
Let $M$ be a matroid on the ground set $S$ and let $<$ be a linear order on $S$. Every subset $A$ of $S$ can be uniquely written in the form $A=B \cup E - I$ for some basis $B$, some subset $E \subseteq EA(B)$, and some subset $I \subseteq IA(B)$. 
Equivalently, the intervals $[B - IA(B), B \cup EA(B)]$ form a partition of the poset $2^S$ of subsets of $S$ ordered by inclusion.
\end{theorem}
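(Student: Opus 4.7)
The plan is to construct an explicit map $\phi\colon 2^S \to \mathcal{B}(M)$ whose fibers are exactly the intervals $[B - IA(B),\, B \cup EA(B)]$. For $A \subseteq S$, I define $\phi(A)$ in two phases: \emph{Phase 1 (peel)} processes the elements of $A$ in decreasing $<$-order, keeping each one iff it maintains independence, producing an independent set $A' \subseteq A$; \emph{Phase 2 (extend)} then processes the elements of $S - A'$ in increasing $<$-order, adding each one iff it maintains independence, producing a basis $B$. That each phase is well-defined, and that the output $B$ is indeed a basis, follows from standard matroid greedy theory.

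The core claim is that $\phi(A) = B$ if and only if $A - B \subseteq EA(B)$ and $B - A \subseteq IA(B)$. For the forward direction, suppose $e \in A - B$: at the moment Phase 1 discarded $e$, the certifying circuit $C$ lay inside the set consisting of $e$ together with the elements of $A'$ greater than $e$. Since $A' \subseteq B$, this forces $C \subseteq B \cup \{e\}$, so $C$ must equal the fundamental circuit $C(B,e)$, and all its other elements exceed $e$, giving $e = \min_< C(B,e) \in EA(B)$. A dual argument on Phase 2 uses the fact that any $x < e$ with $(B - e) \cup x$ a basis would have been added to the current set before $e$ was reached, establishing $e \in B - A \Rightarrow e = \min_< D(B,e) \in IA(B)$. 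For the reverse direction, given $A$ with $B - IA(B) \subseteq A \subseteq B \cup EA(B)$, I trace the algorithm and argue that Phase 1 retains exactly $A \cap B$ and Phase 2 extends exactly to $B$; the essential step is to show that for $e \in A - B$, the fundamental circuit $C(B,e)$ sits entirely inside $(A \cap B) \cup \{e\}$ at the moment of processing, so dependence is detected at the right time.

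The main technical obstacle is precisely this last step. The potential obstruction is an element $g \in C(B,e) - e$ lying in $B - A \subseteq IA(B)$ rather than in $A \cap B$. The remedy is Lemma \ref{lemma:circuitcocircuit}: since $C(B,e) \subseteq B \cup \{e\}$ and $D(B,g) \subseteq (S-B) \cup \{g\}$, their intersection is forced to equal $\{g, e\}$, placing $e$ into $D(B,g)$. Then $g = \min_< D(B,g)$ forces $g \leq e$, while $e = \min_< C(B,e)$ forces $g > e$, an impossibility. This circuit-cocircuit intersection argument is the crux; once it is in hand, the remaining verifications reduce to routine bookkeeping about the greedy procedure, and the two inclusions combine to establish the partition $2^S = \bigsqcup_B [B - IA(B),\, B \cup EA(B)]$, yielding the unique decomposition $A = B \cup E - I$ with $E \subseteq EA(B)$, $I \subseteq IA(B)$.
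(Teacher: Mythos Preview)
The paper does not supply its own proof of this theorem; it is quoted as a result of Crapo with the citation \cite{Crapo} and is used as a black box in the proof of Theorem~\ref{thm:simplicialcomplex}. So there is no ``paper's proof'' to compare against.

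Your argument is correct in substance. The two--phase greedy construction of $\phi$ is a clean way to exhibit the partition, and the crucial observation---that the circuit--cocircuit inequality $|C\cap D|\neq 1$ forces $e\in D(B,g)$ whenever $g\in C(B,e)$, yielding the order contradiction between $e=\min C(B,e)$ and $g=\min D(B,g)$---is exactly the right engine. A couple of small points are worth tightening. First, in the forward direction you silently use $A'=A\cap B$; this deserves a sentence: anything discarded in Phase~1 lies in a circuit inside $A'\cup\{e\}\subseteq B\cup\{e\}$, so it cannot be re-added in Phase~2, and conversely anything in $A\cap B$ must have been kept in Phase~1. Second, in the reverse direction you single out the Phase~1 discard step as ``the essential step'', but the Phase~2 rejection of an element $e\in S-B$ needs the same circuit--cocircuit trick (to ensure every $g\in C(B,e)\cap(B-A)\subseteq IA(B)$ satisfies $g<e$, hence has already been added). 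Once you note that the identical argument applies there, the bookkeeping really is routine and the fibres of $\phi$ are precisely the claimed intervals.
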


This can be used to prove:

\begin{theorem} \cite{Crapo} \label{thm:Crapo}
Let $M$ be a matroid on the ground set $S$ and let $<$ be a linear order on $S$. Then the Tutte polynomial and $h$-polynomial of $M$ are given by
\[
T_M(x,y) = \sum_{B \, \textrm{basis}} x^{|IA_<(B)|} y^{|EA_<(B)|}, \qquad 
h_M(x) = \sum_{B \, \textrm{basis}} x^{|IA_<(B)|}. 
\]
\end{theorem}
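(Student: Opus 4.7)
The plan is to use the partition of $2^S$ established in the immediately preceding Crapo theorem to reorganize the defining sum of the Tutte polynomial. Since every $A \subseteq S$ lies in a unique interval $[B - IA(B),\, B \cup EA(B)]$, I can write $A = (B - I) \cup E$ uniquely with $I \subseteq IA(B)$ and $E \subseteq EA(B)$, so that $|A| = r - |I| + |E|$. Substituting into $T_M(x,y) = \sum_{A} (x-1)^{r-r(A)} (y-1)^{|A|-r(A)}$ should reduce the theorem to a binomial summation once $r(A)$ is computed.

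The main step, and the one I expect to be the principal obstacle, is the rank identity $r(A) = r - |I|$ for every $A$ in the interval attached to $B$. The lower bound is immediate since $B - I \subseteq B$ is independent. For the upper bound, my plan is to show that every $e \in E \subseteq EA(B)$ already lies in $\mathrm{cl}(B - I)$, by arguing that the fundamental circuit $C(B,e)$ contains no internally active element. Suppose for contradiction that some $i \in C(B,e) - e$ is internally active. By the standard fundamental-circuit/cocircuit correspondence ($i \in C(B,e) \iff e \in D(B,i)$ for $i \in B$ and $e \notin B$), we get $e \in D(B,i)$; internal activity of $i$ then gives $i < e$, while external activity of $e$ gives $e \leq i$, a contradiction. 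Hence $(C(B,e) - e) \cap IA(B) = \emptyset$, so $C(B,e) - e \subseteq B - I$, yielding $e \in \mathrm{cl}(B - I)$ and therefore $r(A) = r(B - I) = r - |I|$.

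With the rank identity in place, the rest is routine: summing $(x-1)^{r-r(A)}(y-1)^{|A|-r(A)} = (x-1)^{|I|}(y-1)^{|E|}$ over the partition gives
\[
T_M(x,y) = \sum_B \left(\sum_{I \subseteq IA(B)} (x-1)^{|I|}\right)\left(\sum_{E \subseteq EA(B)} (y-1)^{|E|}\right) = \sum_B x^{|IA(B)|}\, y^{|EA(B)|}
\]
by the binomial theorem applied to each inner sum. The $h$-polynomial identity then follows by setting $y=1$ and invoking the earlier identity $h_M(x) = T_M(x,1)$.
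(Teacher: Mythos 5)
Your proof is correct and follows exactly the route the paper indicates: the paper cites Crapo and simply remarks that the preceding interval-partition theorem ``can be used to prove'' this formula, and your argument supplies the standard derivation along those lines. The key rank identity $r(A) = r - |I|$ (via showing each externally active $e$ satisfies $C(B,e)-e \subseteq B - IA(B)$) is the right lemma, and your fundamental circuit/cocircuit argument for $(C(B,e)-e) \cap IA(B) = \emptyset$ is sound.
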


This beautiful result implies, in particular, the nontrivial fact that the right hand side of each equation does not depend on the chosen linear order. 

\subsection{\textsf{Lattice of flats and M\"obius function.}}

A \emph{flat} $F$ of a matroid $M$ is a subset which is maximal for its rank; that is, a set such that $r(F \cup f) = r(F)+1$ for all $f \notin F$. The flats of rank $r-1$ are called \emph{hyperplanes}. In the case that interests us, when $M$ is the linear matroid of a set of vectors $E \subset \kk^n$, the flats of $M$ correspond to the subspaces spanned by subsets of $E$. 
The \emph{lattice of flats} $L_M$ is the poset of flats ordered by containment; it is in fact a lattice, graded by rank. The flats in Example \ref{ex} are
\[
L_M = \{\emptyset, 16, 2, 3, 4, 5, 1256, 136, 146, 23, 24, 345, 123456\},
\]
as illustrated in Figure \ref{fig:lattice}.


The \emph{M\"obius function} of $L_M$ is the map $\mu: Int(L_M) \rightarrow \ZZ$ from the intervals of $L_M$ to $\ZZ$ characterized by $\mu(x,x) = 1$ for all $x \in L_M$ and $\sum_{x \leq z \leq y} \mu(z,y) = 0$ for all $x < y$.\footnote{It is more common to demand that $\sum_{x \leq z \leq y} \mu(x,z) = 0$ for all $x<y$; these two conditions are equivalent.} The \emph{M\"obius number} of $M$ is $\mu(M) = \mu(\widehat{0}, \widehat{1})$, where $\widehat{0}$ and $\widehat{1}$ are the minimum and maximum elements of $L_M$. Figure \ref{fig:lattice} shows the value of $\mu(F,\widehat{1})$ next to each flat $F$ of $M$.

\subsection{\textsf{Independence complexes and cocircuit ideals.}}\label{sec:complexes}

To a matroid $M$ on the ground set $E$ one associates a simplicial complex
\[
IN(M) = \{I \subseteq E \, : \, I \textrm{ is independent in } M\}
\] 
called the \emph{independence complex} of $M$. For us, the independence complex of the dual matroid $M^*$ is more relevant. These complexes have very simple topology:

\begin{theorem} \label{thm:Bjorner}
 \cite[Theorem 7.8.1]{Bj} If $M$ is a matroid of rank $r$ on $[n]$, then 
 \[
H_i(IN(M^*); \ZZ) = \begin{cases} \ZZ^{\, |{\mu}(M)|}, & \mbox{if } i=n-r-1 \mbox{ and $M$ has no loops}\\ 0 , & \mbox{otherwise}. \end{cases}.
\]
\end{theorem}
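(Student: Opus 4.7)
The plan is a two-step argument: show that $IN(M^*)$ is homotopy equivalent to a wedge of spheres of dimension $n-r-1$, then count those spheres and identify the count with $|\mu(M)|$. I begin by disposing of the degenerate case: if $M$ has a loop $e$, then $e$ lies in no basis of $M$ and so in every basis of $M^*$; that is, $e$ is a coloop of $M^*$, contained in every facet of $IN(M^*)$. Hence $IN(M^*)$ is a cone with apex $\{e\}$, so it is contractible and all reduced homology vanishes, matching the ``otherwise'' branch of the statement.

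Assume now that $M$ is loopless, so $M^*$ is coloopless and $IN(M^*)$ is pure of dimension $n-r-1$. I would invoke the classical theorem that every matroid independence complex is pure shellable; a concrete shelling is obtained by fixing a linear order on $[n]$ and ordering the bases of $M^*$ lexicographically, with the shelling condition following from basis exchange. A pure shellable complex of dimension $d$ is homotopy equivalent to a wedge of $d$-spheres, which forces $\widetilde{H}_i(IN(M^*);\ZZ) = 0$ for $i < n-r-1$ and $\widetilde{H}_{n-r-1}(IN(M^*);\ZZ) \cong \ZZ^{s}$ for some integer $s \geq 0$, namely the number of facets whose attachment in the shelling is along their entire boundary.

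It remains to show $s = |\mu(M)|$, which I would do by computing the reduced Euler characteristic
\[
\widetilde{\chi}(IN(M^*)) \;=\; (-1)^{n-r-1} s \;=\; \sum_{I \in IN(M^*)} (-1)^{|I|-1}.
\]
Rearranging independent sets by their rank and closure collapses the right-hand side to the Tutte polynomial evaluation $T_{M^*}(0,1)$. By the matroid duality $T_{M^*}(x,y) = T_M(y,x)$ this equals $T_M(1,0)$, and Whitney's NBC theorem (equivalently Rota's sign theorem for $\mu$) yields $T_M(1,0) = (-1)^r \mu(\widehat{0},\widehat{1}) = |\mu(M)|$ for loopless $M$. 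Tracking the factor $(-1)^{n-r-1}$ against these identities recovers the stated rank.

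The main obstacle is the last step: matching the alternating sum $\widetilde{\chi}(IN(M^*))$ with $|\mu(M)|$ while keeping all signs correct. Shellability of matroid complexes is well-established, and the cone argument in the looped case is immediate, so the real bookkeeping lies in the Tutte/Möbius identities. The cleanest route is to detour through $T_M$ and invoke Whitney's NBC theorem, rather than attempting to evaluate $\widetilde{\chi}(IN(M^*))$ by direct inclusion--exclusion on the lattice of flats of $M^*$, where the connection to $\mu(M)$ is obscured by the duality swap.
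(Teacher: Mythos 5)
The paper offers no proof of this theorem; it is quoted verbatim from Bj\"orner \cite[Theorem 7.8.1]{Bj}. Your argument --- shellability of the matroid independence complex to get a wedge of top-dimensional spheres, then the reduced Euler characteristic $\widetilde{\chi}(IN(M^*)) = (-1)^{n-r-1}\,T_{M^*}(0,1) \cdot (-1)^{n-r-1}$ together with the duality $T_{M^*}(0,1)=T_M(1,0)$ and the characteristic-polynomial identity $T_M(1,0)=|\mu(M)|$ to count them --- is precisely the standard proof that Bj\"orner gives, and it is sound. Two small points of bookkeeping: the purity of $IN(M^*)$ holds regardless of whether $M^*$ has coloops (it is not a consequence of looplessness of $M$, as your phrasing suggests), and the phrase ``rearranging independent sets by their rank and closure'' undersells how direct the step is --- one simply notes that in $T_{M^*}(x,y)=\sum_{A}(x-1)^{(n-r)-r^*(A)}(y-1)^{|A|-r^*(A)}$ the specialization $x=0$, $y=1$ kills every non-independent $A$ and leaves $\sum_{I \text{ indep.}}(-1)^{(n-r)-|I|}$, which is $(-1)^{n-r-1}\widetilde{\chi}$. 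With these cosmetic clarifications the argument is correct and complete.
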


Recall that the \emph{Stanley-Reisner ideal} of a simplicial complex $\Delta$ on a set $\{x_1,  \ldots, x_n\}$ is the ideal 
\[
I_\Delta = \left<x_{i_1} x_{i_2} \cdots x_{i_k} \, : \, \{i_1, \ldots, i_k\} \textrm{ is not a face of } \Delta\right> \subset \kk[x_1, \ldots, x_n].
\]
The \emph{Stanley-Reisner ring} is $\kk[x_1, \ldots, x_n]/I_\Delta$.
Since the minimal non-faces of $IN(M^*)$ are the circuits of $M^*$, which are the cocircuits of $M$, the Stanley--Reisner ideal of $IN(M^*)$ is the \emph{cocircuit ideal}
\[
I_{IN(M^*)} = \left< \prod_{c \in C} x_c \, : C \textrm{ is a cocircuit of } M\right>.
\]
The components of the primary decomposition of a squarefree monomial ideal $I_\Delta$ are in bijection with the facets of $\Delta$; each facet $F$ corresponds to the primary component $\left<x_f \, : \, f \notin F\right>$.  \cite[Theorem 1.7]{MS} Since the facets of $IN(M^*)$ are the bases of $M^*$, we get that the primary decomposition of $I_{IN(M^*)}$ is
\[
I_{IN(M^*)} = \bigcap_{B \textrm{ basis}} \left<x_b \, : \,  b \in B\right>.
\]

In our running Example \ref{ex} we have 
\begin{eqnarray*}
I_{IN(M^*)} &=& \left< x_1x_2x_6, \,\, x_2x_3x_5, \,\,x_2x_4x_5, \,\,x_3x_4, \,\,x_1x_3x_5x_6, \,\,x_1x_4x_5x_6 \right> \\
&=& 
\left<x_1,x_2,x_3\right> \, \cap \, 
\left<x_1,x_2,x_4\right> \, \cap \, 
\left<x_1,x_3,x_4\right> \, \cap \, 
\left<x_1,x_3,x_5\right> \, \cap \, 
\left<x_1,x_4,x_5\right> \, \cap \, \\
&&
\left<x_2,x_3,x_4\right> \, \cap \, 
\left<x_2,x_3,x_5\right> \, \cap \, 
\left<x_2,x_3,x_6\right> \, \cap \, 
\left<x_2,x_4,x_5\right> \, \cap \, 
\left<x_2,x_4,x_6\right> \, \cap \, \\
&&
\left<x_3,x_4,x_6\right> \, \cap \, 
\left<x_3,x_5,x_6\right> \, \cap \, 
\left<x_4,x_5,x_6\right>.
\end{eqnarray*}

Now we recall Hochster's formula for the Betti numbers of a squarefree monomial ideal:

\begin{theorem}\cite[Corollary 5.12]{MS}
The nonzero Betti numbers of the Stanley--Reisner ring $I_\Delta$ lie only in squarefree degrees $\sigma$, and
\[
\beta_{i-1, \sigma}(I_\Delta) = \dim_\k \widetilde{H}^{|\sigma|-i-1}(\Delta |_\sigma).
\]
\end{theorem}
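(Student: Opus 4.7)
The plan is to compute $\beta_{i-1,\sigma}(I_\Delta) = \dim_\k \Tor_i^S(S/I_\Delta, \k)_\sigma$ (the shift by one coming from the short exact sequence $0 \to I_\Delta \to S \to S/I_\Delta \to 0$) by resolving $\k$ with the Koszul complex $K_\bullet = \bigwedge^\bullet S^n$ of the regular sequence $x_1, \ldots, x_n$, so that the multigraded Tor equals $H_i\bigl((K_\bullet \otimes_S S/I_\Delta)_\sigma\bigr)$. The $\mathbb{Z}^n$-grading is preserved because the Koszul differential is homogeneous with each basis vector $e_i \in K_1$ placed in multidegree $e_i$.

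The core of the proof is to identify this multigraded chain complex with a reduced simplicial chain complex. A $\k$-basis of $(K_p \otimes_S S/I_\Delta)_\sigma$ consists of classes $x^\alpha e_\tau$ with $\alpha + e_\tau = \sigma$, $\supp(\alpha) \in \Delta$, and $|\tau|=p$. For squarefree $\sigma$ this forces $\tau \subseteq \sigma$ and $\rho := \sigma \setminus \tau \in \Delta|_\sigma$, putting the basis in bijection with the faces of $\Delta|_\sigma$ of cardinality $|\sigma|-p$. With compatible orderings chosen on $\sigma$, the Koszul differential $x^\alpha e_\tau \mapsto \sum_{j \in \tau} (-1)^{\mathrm{pos}(j,\tau)} x_j x^\alpha e_{\tau \setminus j}$ corresponds, under $\tau \leftrightarrow \rho$, to the simplicial boundary of $\Delta|_\sigma$, yielding
\[
\Tor_i^S(S/I_\Delta,\k)_\sigma \;\cong\; \widetilde{H}_{|\sigma|-i-1}(\Delta|_\sigma;\k) \;\cong\; \widetilde{H}^{|\sigma|-i-1}(\Delta|_\sigma;\k),
\]
where the second isomorphism uses that reduced homology and cohomology of a finite complex coincide over a field. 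For $\sigma$ not squarefree, an analogous basis analysis with $V = \{i:\sigma_i=1\}$ and $W = \{i:\sigma_i \geq 2\}$ shows that any $\ell \in W$ acts as a cone vertex on the combinatorial object indexing the basis: adding or removing $\ell$ from $\tau$ preserves the condition $W \cup (V \setminus \tau) \in \Delta$. The resulting contractibility forces $(K_\bullet \otimes_S S/I_\Delta)_\sigma$ to be acyclic, so $\beta_{i-1,\sigma}(I_\Delta) = 0$; this establishes the squarefreeness assertion.

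The main obstacle is the sign bookkeeping that turns the Koszul differential into the simplicial boundary under the complementation $\tau \leftrightarrow \rho = \sigma \setminus \tau$. One must orient each $\tau$ and its complement compatibly inside the ambient ordered set $\sigma$ so that the Koszul signs $(-1)^{\mathrm{pos}(j,\tau)}$ and the simplicial boundary signs $(-1)^{\mathrm{pos}(j,\rho \cup j)}$ differ by a uniform sign depending only on $\sigma$, so as to give an honest isomorphism of chain complexes rather than merely a quasi-isomorphism up to a global sign discrepancy. Once this compatibility is verified, the claimed identity of dimensions is immediate.
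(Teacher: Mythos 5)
The paper states this result (Hochster's formula) by citation to Miller--Sturmfels \cite[Corollary~5.12]{MS} and gives no proof of its own, so there is no in-paper argument to compare against. Your Koszul-complex proof is the standard one and is essentially correct. One technical correction: under the bijection $\tau \leftrightarrow \rho = \sigma \setminus \tau$, the Koszul differential $K_p \to K_{p-1}$ removes an element from $\tau$ and hence \emph{adds} an element to $\rho$, so it is the simplicial \emph{coboundary} of $\Delta|_\sigma$, not the boundary. The direct identification is therefore with the reduced cochain complex of $\Delta|_\sigma$ (with reversed grading index), which gives $\Tor_i^S(S/I_\Delta,\k)_\sigma \cong \widetilde{H}^{|\sigma|-i-1}(\Delta|_\sigma;\k)$ on the nose; the field-duality between reduced homology and cohomology that you invoke belongs at the step passing to $\widetilde{H}_{|\sigma|-i-1}$, not the last step as written, though the dimension count is of course unaffected. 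Your acyclicity argument for non-squarefree $\sigma$, using any $\ell$ with $\sigma_\ell \geq 2$ as a cone vertex via a chain homotopy toggling $\ell$ in $\tau$, is correct and standard.
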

Let us apply this formula to $\Delta = IN(M^*)$ and $\sigma = E-A$ for a subset $A \subset E$. We have that $\Delta |_{E-A} = IN(M^*|_{E-A}) = IN(M^* \backslash A)$. 
Notice that $(M^* \backslash A)^* = M/A$ has no loops if and only if $A$ is a flat of $M$. Also $r(M/A) = r - r(A)$ and $\mu(M/A) = \mu(A, \widehat{1})$. Combining these observations with Theorem \ref{thm:Bjorner}, we obtain the following result.

\begin{theorem}\label{thm:onlynonzeroBetti}
The only nonzero Betti numbers of the cocircuit ideal $I_{IN(M^*)}$ of $M$ are
\[
\beta_{r-r(A)-1, e_{E-A}}(I_{IN(M^*)}) = |\mu(A, \widehat{1})| 
\]
for the flats $A$ of $M$.
\end{theorem}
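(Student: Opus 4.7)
The plan is to combine Hochster's formula (stated immediately before the theorem) with Björner's theorem on matroid independence complexes (Theorem \ref{thm:Bjorner}), exploiting the duality relation $(M^* \backslash A)^* = M/A$ and the computations of rank and Möbius function that the paragraph preceding the statement already flagged.

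First, I would parametrize. Hochster's formula forces the nonzero Betti numbers of $I_{IN(M^*)}$ to live in squarefree multidegrees, so I would write every such degree as $\sigma = E - A$ for some $A \subseteq E$, giving
\[
\beta_{i-1,\, e_{E-A}}(I_{IN(M^*)}) = \dim_{\kk} \widetilde{H}^{(n-|A|)-i-1}\bigl(IN(M^*)|_{E-A}\bigr).
\]
Second, I would identify the restricted complex. A subset of $E-A$ is a face of $IN(M^*)|_{E-A}$ iff it is independent in $M^*$ and disjoint from $A$, i.e.\ iff it is independent in $M^* \backslash A$. So $IN(M^*)|_{E-A} = IN(M^* \backslash A) = IN((M/A)^*)$, putting Theorem \ref{thm:Bjorner} in position to be applied to $M/A$.

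Third, I would carry out the three matroid translations. Björner's theorem says the reduced homology of $IN((M/A)^*)$ is concentrated in dimension $(n-|A|) - r(M/A) - 1$ with rank $|\mu(M/A)|$, and vanishes when $M/A$ has a loop. The loopless condition for $M/A$ is exactly the condition that $A$ be a flat of $M$, since an element $e \notin A$ is a loop of $M/A$ precisely when adjoining $e$ to $A$ does not increase the rank. The identity $r(M/A) = r - r(A)$ is standard, and the lattice of flats of $M/A$ is canonically the upper interval $[A,\widehat{1}]$ in the lattice of flats of $M$, hence $\mu(M/A) = \mu(A,\widehat{1})$. Matching the cohomological index in Hochster's formula with the top homological dimension in Björner's theorem (harmless over a field $\kk$, where $\dim\widetilde{H}^j = \dim\widetilde{H}_j$) forces $(n-|A|) - i - 1 = (n-|A|) - r(M/A) - 1$, which gives $i = r - r(A)$ and therefore $i-1 = r - r(A) - 1$, exactly the claimed homological index.

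The main obstacle, modest as it is, is purely bookkeeping: one must keep straight the off-by-one between the grading conventions for the resolution of the ideal $I_\Delta$ versus the quotient ring $S/I_\Delta$, and one must be sure the loopless-to-flat translation and the Möbius identification are cited correctly. None of these are deep, and once they are nailed down the theorem falls out as a direct concatenation of the two cited results.
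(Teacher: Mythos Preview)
Your proposal is correct and follows essentially the same route as the paper: apply Hochster's formula with $\sigma = E - A$, identify the restricted complex as $IN(M^*\backslash A) = IN((M/A)^*)$, and then invoke Bj\"orner's theorem together with the translations (loopless $\Leftrightarrow$ flat, $r(M/A) = r - r(A)$, $\mu(M/A) = \mu(A,\widehat{1})$) already recorded in the paragraph preceding the statement. Your additional care with the index matching and the cohomology/homology identification over a field is a nice touch, but the argument is the paper's own.
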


%

\section{\textsf{Preliminaries from commutative algebra.}}\label{sec:prelimalg}

In this section we briefly review relevant definitions and results from combinatorial commutative algebra.  We refer the reader to \cite{HerzogHibi, MS} for a thorough treatment of all of these topics.

\subsection{\textsf{Free resolutions and Betti numbers.}}\label{sec:freeres}
Recall that $S = \kk[x_1,\ldots,x_n]$. Let $M$ be a graded $S$-module. One important invariant of $M$ is its \emph{minimal free resolution}, which is an exact sequence of maps of $S$-modules
$$\xymatrix{0 \ar[r] &  F_d \ar[r]^{\phi_{d}} & F_{d-1} \ar[r]^{\phi_{d-1}} &  \cdots \ar[r]^{\phi_{1}} &  F_0 \ar[r]^{\phi_{0}} &  M \ar[r] & 0}$$
where the $F_i$ are free modules chosen to have rank as small as possible.  Such a resolution is unique up to isomorphism, and the maps can be chosen so that they are homogeneous with respect to the grading.  Each $F_i$ is a direct sum
$$F_i = \bigoplus_{\mathbf{a}} S(-\mathbf{a})^{\beta_{i,\mathbf{a}}}$$
where $S(-\mathbf{a})$ denotes the free module whose generator lies in degree $\mathbf{a}$.  The ranks of these graded pieces are defined to be the \emph{graded Betti numbers} $\beta_{i,\mathbf{a}}$ and can be computed as dimensions of $\Tor$ modules according to the formula
$$\beta_{i,\mathbf a}(M) =  \dim_k (\Tor^S_i(M,\kk))_{\mathbf a}.$$
The modules $F_i$ in the minimal free resolution are collectively called \emph{syzygies} and encode algebraic relations among the generators of $M$. In what follows we assume that $M = S/I$ for a homogeneous ideal $I$.

\begin{example}
With this notation, the ideal $\widetilde{I}$ in Example \ref{ex} has minimal free resolution 
$$\xymatrix{0 \ar[r] &  
{\begin{array}{c}
S(-\mathbf{123456})^4
\end{array}}
\ar[r] &
{\begin{array}{c}
S(-\mathbf{12346}) \\  \oplus \\
S(-\mathbf{2345})^2 \\  \oplus \\
S(-\mathbf{13456})^2 \\ \oplus \\
S(-\mathbf{12456})^2 \\  \oplus \\
S(-\mathbf{12356})^2
\end{array}}
 \ar[r] &  
{\begin{array}{c}
S(-\mathbf{126}) \\ \oplus \\
S(-\mathbf{34}) \\  \oplus \\
S(-\mathbf{1356}) \\  \oplus \\
S(-\mathbf{1456}) \\  \oplus \\
S(-\mathbf{235}) \\  \oplus \\
S(-\mathbf{245})
\end{array}}
\ar[r] & S \ar[r] &  S/\widetilde{I} \ar[r] & 0},$$
where we denote, for example, the degree $(1,1,0,0,0,1)$ as $\mathbf{126}$ to save space. We have $\beta_{1,\mathbf{34}} = 1$ because $\widetilde{I}$ has one generator in degree $\mathbf{34}$, namely, the homogenized cocircuit $\blue{x_3}y_4+y_3\blue{x_4}$. We have $\beta_{2,\mathbf{2345}} = 2$ because there are two independent $S$-linear relations (\emph{syzygies}) in degree $\mathbf{2345}$ among the six generators of $\widetilde{I}$, namely:
\begin{eqnarray*}
y_2y_5(\bblue{x_3}y_4+y_3\bblue{x_4})+
y_4(\bblue{x_2}y_3y_5 - y_2\bblue{x_3}y_5+y_2y_3\bblue{x_5})
-y_3(\bblue{x_2}y_4y_5 + y_2\bblue{x_4}y_5+y_2y_4\bblue{x_5}) &=& 0, \\
(\bblue{x_2}y_5+y_2\bblue{x_5})(\bblue{x_3}y_4+y_3\bblue{x_4})-
\bblue{x_4}(\bblue{x_2}y_3y_5 - y_2\bblue{x_3}y_5+y_2y_3\bblue{x_5})
-\bblue{x_3}(\bblue{x_2}y_4y_5 + y_2\bblue{x_4}y_5+y_2y_4\bblue{x_5}) &=& 0.
\end{eqnarray*}

\end{example}

A \emph{monomial term order} $<$ on the polynomial ring $S$ is a total order on the set of monomials  that is respected by multiplication.  For each polynomial $f$, the term order determines a \emph{leading term} $\initial_<f$ which is the largest term with respect to $<$.  For an ideal $I\subset S$ we define the \emph{initial ideal} with respect to $<$ to be the ideal generated by the leading terms of all polynomials in $I$:
$$\initial_< I = \left( \initial_<f \ | \ f\in I \right).$$
Initial ideals of can be thought of as flat degenerations (see \cite[Theorem 15.17]{E}) and as such, the Hilbert function of an ideal is equal to that of its initial ideal.  By contrast, Betti numbers may change, but they obey the following inequality:
$$\beta_{i, \mathbf{a}}(S/I) \leq \beta_{i, \mathbf{a}}(S/\initial_< I).$$
The upshot, however, is that if this inequality is strict, then the extra free modules appearing in the minimal free resolution of $S/\initial_< I$ must occur in pairs; the modules in each pair occur in neighboring homological degrees and have the same generating degree.  Such a pair is known as a \emph{consecutive cancellation}, because in the resolution of $S/I$, these modules cancel out. (See \cite{Peeva:Book} and \cite[Remark 8.30]{MS})

\begin{example}
If $J = \left<x^2,xy+y^2\right>\subset \kk[x,y]$ then under the term order determined by $x>y$,  
$$\initial_<(J) = \left<x^2,xy,y^3\right>.$$
Under the usual $\mathbb{Z}$-grading, the ideals $J$ and $\initial_<J$ have minimal free resolutions

$$\xymatrix{0 \ar[r] &
S(-4)^1
 \ar[rr]^{\left(\begin{array}{c}
 xy + y^2 \\ -x^2
 \end{array}\right)}
 &  &
S(-2)^2
\ar[rrr]^{
\left(\begin{array}{cc}
x^2 & xy + y^2 
\end{array}\right)
}
 & & & S \ar[r] & S/J \ar[r] & 0}.$$
and 
$$\xymatrix{0 
\ar[r] &
{\begin{array}{c}
S(-3)^1\\ \oplus \\ S(-4)^1 \end{array}}
 \ar[rr]^{\left(\begin{array}{cc}
 y &0 \\
 -x & y^2\\ 
  0 & -x 
 \end{array}\right)}
 &  &
{\begin{array}{c}
S(-2)^2\\ \oplus \\ S(-3)^1 \end{array}}
\ar[rrr]^{
\left(\begin{array}{ccc}
x^2 & xy & y^3
\end{array}\right)
}
 & & & S \ar[r] & S/\initial_< J  \ar[r] & 0}.$$ 
The consecutive pair of two copies of $S(-3)^1$ is a consecutive cancellation.
\end{example}

\subsection{\textsf{Degree and multidegree.}}\label{sec:deg}

If $X \subset \mathbb{P}^n$ is a projective variety over an algebraically closed field then the \emph{degree} of $X$ is defined to be the number of intersection points of $X$ with a linear subspace $L$ in general position where $\dim X + \dim L = n$.  Since we work in a product of projective spaces, we will consider a finer invariant, called the \emph{multi-degree}.  In a product of projective spaces, the multi-degree captures the number of points of intersection of $X$ with general collections of linear spaces in the different coordinate subspaces.  It is convenient to encode these numbers as the coefficients of a polynomial.  We define multidegree geometrically for varieties inside of $(\mathbb{P}^1)^n$ and refer the reader to \cite{MS} for the more general case, as well as an algebraic definition in terms of free resolutions. 

Let $d = \dim X$ and $r=n-d = \codim X$. For each $r$-subset $\Delta\subset [n]$, consider the linear subspace $Z_\Delta\subset (\PP^1)^n = \prod_{i = 1}^n (\PP^1)_i$  (where we give subindices to the various $\PP^1$s to distinguish them) given by
\[
Z_\Delta = \prod_{i\in \Delta} (\mathbb{P}^1)_i \times \prod_{i\notin \Delta} q_i,
\]
where $q_i\in (\PP^1)_i$ is a general point.  If $X$ is a subvariety of $(\mathbb{P}^1)^n$ of codimension $r$ then denote by $m(Z_\Delta, X)$ the intersection multiplicity of $X$ with $Z_\Delta$.   By the genericity of $Z_\Delta$ this will simply be the number of points in the intersection counted with multiplicity.  Then the \emph{multi-degree} of $X$ is defined to be the polynomial 
$$\textrm{mdeg } X = \sum_{\Delta \in {[n] \choose r} }m(Z_\Delta, X) \, t_{i_1}\cdots t_{i_r},$$
where $\Delta=\{i_1, \ldots, i_r\}$ ranges over all $r$-subsets of $[n]$.

By definition, the multi-degree can only detect information about the highest dimensional components of $X$.  The next result follows easily from the definitions.
\begin{proposition}\label{prop:mdeg}
Let $I\subset \kk[\blue{x_1},\ldots, \blue{x_n}, y_1, \ldots, y_n]$ be an ideal defining a subscheme $X$ in $(\mathbb{P}^1)^n$.  If $X$ has irreducible components of maximal dimension $\{X_1, \ldots, X_k\}$ then 
$$\textrm{mdeg } X  = \sum_{i=1}^k \textrm{mdeg }X_i.$$
If $X$ is defined by a monomial ideal $I = (z_1^{a_1},\ldots, z_c^{a_c})$ where $z_i$ is either $\blue{x_i}$ or $y_i$ and $a_i \in \mathbb{N}$ for each $1 \leq i \leq c$, then 
$$\textrm{mdeg } X = (a_1\cdots a_c) \,\, t_1\cdots t_c.$$
\end{proposition}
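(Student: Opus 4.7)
My plan is to treat the two parts of the proposition as essentially geometric/combinatorial consequences of the definition of multidegree given just above the statement. In both cases, the key observation is that the subspaces $Z_\Delta$ are generic, so they intersect $X$ properly and in a controlled way.

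For part (1), I would fix $\Delta$ with $|\Delta|=r=\codim X$, so $\dim Z_\Delta = d = \dim X$. By the genericity of the points $q_i$ defining $Z_\Delta$, the subspace $Z_\Delta$ is disjoint from every irreducible component $Y$ of $X$ of dimension strictly smaller than $d$: indeed $\dim Y + \dim Z_\Delta < n$, and as $Z_\Delta$ varies in an $(n-d)$-dimensional family that sweeps out $(\PP^1)^n$, only a proper subvariety of parameters can meet $Y$. Hence $Z_\Delta \cap X = \bigsqcup_i (Z_\Delta \cap X_i)$, with each $Z_\Delta$ meeting each maximal component $X_i$ properly and in finitely many (reduced, generically) points. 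Intersection multiplicities are additive under proper intersection with a disjoint union of components, so $m(Z_\Delta,X)=\sum_i m(Z_\Delta,X_i)$. Summing over $\Delta$ yields $\textrm{mdeg } X = \sum_i \textrm{mdeg } X_i$.

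For part (2), the subscheme $X$ defined by $I=(z_1^{a_1},\dots,z_c^{a_c})$ factors as a product scheme
\[
X \;=\; Y_1 \times \cdots \times Y_n \;\subset\; \prod_{i=1}^n (\PP^1)_i,
\]
where $Y_i \subset (\PP^1)_i$ is the length-$a_i$ subscheme cut out by $z_i^{a_i}=0$ for $1\le i\le c$, and $Y_i=(\PP^1)_i$ for $c<i\le n$. Hence $\codim X = c$, so the multidegree is a sum over $c$-subsets $\Delta \subset [n]$. For $Z_\Delta$ to intersect $X$ nontrivially, the generic point $q_i \in (\PP^1)_i$ for $i \notin \Delta$ must lie in $Y_i$; this forces $\{1,\ldots,c\}\subseteq \Delta$, hence $\Delta = \{1,\ldots,c\}$. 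For this single $\Delta$ the intersection $Z_\Delta \cap X$ is the zero-dimensional scheme $Y_1 \times \cdots \times Y_c \times \{q_{c+1}\}\times \cdots \times\{q_n\}$, whose length equals $a_1\cdots a_c$. Therefore $\textrm{mdeg } X = (a_1\cdots a_c)\, t_1\cdots t_c$.

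The only subtle point is the additivity step in part (1), since multidegree was defined geometrically and additivity of proper intersection multiplicities is not spelled out in the excerpt. The cleanest way to handle this is to appeal to the algebraic definition of multidegree via free resolutions in \cite{MS}, where additivity over top-dimensional components is a formal consequence of the behavior of the $K$-polynomial under short exact sequences; alternatively one can invoke that for generic $Z_\Delta$ the scheme-theoretic intersection is a disjoint union of reduced points, one per component, and count. I expect this additivity to be the only step requiring more than bookkeeping; part (2) is then a direct computation once one observes the product structure.
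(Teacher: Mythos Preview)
Your argument is correct and is exactly the kind of direct unwinding of the geometric definition that the paper has in mind: the paper gives no proof at all, stating only that the proposition ``follows easily from the definitions.'' One small slip: the family of $Z_\Delta$'s is parametrized by $\prod_{i\notin\Delta}(\PP^1)_i$ and hence has dimension $n-r=d$, not $n-d$; this does not affect your genericity argument, since the point is only that the incidence locus over a component $Y$ with $\dim Y<d$ projects to a proper subvariety of the $d$-dimensional parameter space.
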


\section{\textsf{The cocircuit polytope.}}\label{sec:polytope}


Our work gives rise to an interesting polytope $O_M$ associated to a matroid $M$, which we call the \emph{cocircuit polytope}. As we will see in the proof of Theorem \ref{longthm cases}(e), when $M$ is the matroid of a linear space $L$ the polytope $O_M$ is affinely isomorphic to the state polytope of the ideal $I(\widetilde{L})$.

Let $\Delta = \conv\{e_1, \ldots, e_n\}$ be the standard simplex in $\RR^n$, and for each $I \subseteq [n]$ let 
\[
\Delta_I = \conv\{e_i \, : \, i \in I\}.
\]
For a matroid $M$ on the ground set $[n]$, let the \emph{cocircuit polytope} $O_M$ be the Minkowski sum
\[
O_M = \sum_{D \textrm{ cocircuit of }M} \Delta_D,
\]
where the \emph{Minkowski sum} of $P, Q \subset \RR^n$ is $P+Q := \{p + q \, : \, p \in P, \, q \in Q\}$.

We will see that these polytopes $O_M$ are related to \emph{matroid (basis) polytopes}, which are much better known and understood; see for example \cite{Edmonds, GGMS}. The vertices of the \emph{matroid polytope} $P_M$ of $M$ are the vectors $e_B = e_{b_1} + \cdots + e_{b_r}$ for each basis $B=\{b_1, \ldots, b_r\}$ of $M$. 
The \emph{connected components} of $M$ are the equivalence classes for the equivalence relation where $a \sim b$ if $a, b \in C$ for some circuit $C$. It is known that $\dim P_M = n - c$ where $c$ is the number of connected components of $M$.

Figure \ref{fig:polytopes} illustrates these polytopes for the matroid $M$ with bases $12$, $13$, $14$, $23$, and $24$. The top left panel shows the standard simplex as a frame of reference. The figure builds up the Minkowski sum $O_M = \Delta_{134} + \Delta_{234} + \Delta_{12}$ one step at a time. It then subtracts the simplex $\Delta_{1234}$ to obtain the signed Minkowski sum $P_M = \Delta_{134} + \Delta_{234} + \Delta_{12} - \Delta_{1234}$, which is  the matroid polytope $P_M$ as shown in \cite{ABD}. 

\begin{figure}[ht]
 \begin{center}
  \includegraphics[scale=0.66]{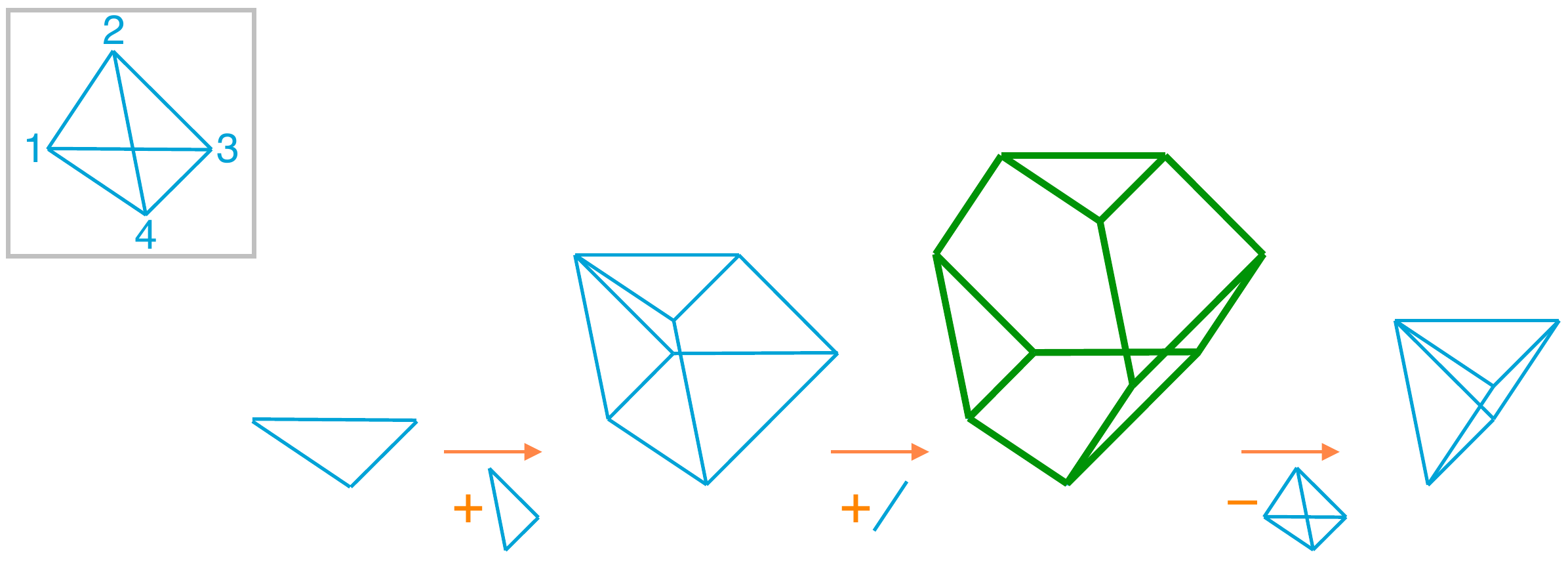}
  \label{fig:polytopes}
  \caption{
  Building up the Minkowski sum
 $\Delta_{134} + \Delta_{234} + \Delta_{12} = O_M$
and the signed Minkowski sum  $\Delta_{134} + \Delta_{234} + \Delta_{12} - \Delta_{1234} = P_M$ one step at a time.}
 \end{center}
\end{figure}

\begin{reptheorem}{thm:polytope}
If a matroid $M$ on $[n]$ has rank $r$, then the cocircuit polytope $O_M$
\begin{enumerate}
\item[(a)]
is given by the equation $\sum_{i=1}^n x_i = D([n])$ and the inequalities $\sum_{i \in S} x_i \leq D(S) \textrm{ for } S \subseteq [n]$, where $D(S)$ is the number of cocircuits intersecting $S$,
\item[(b)]
has dimension $n-c$ where $c$ is the number of connected components of $M$,
\item[(c)]
has the matroid polytope $P_M = \conv\{e_B \, : \, B \textrm{ basis}\}$ as a Minkowski summand,
\item[(d)]
has at most $r! \cdot b$ vertices, where $b$ is the number of bases. 
\end{enumerate}
\end{reptheorem}

Before we prove this theorem, it is useful to recall some basic facts about generalized permutahedra \cite{Postnikov}. The \emph{permutahedron} $\Pi_n$ is the convex hull of the $n!$ permutations of $\{1, \ldots, n\}$ in $\RR^n$; its normal fan is the braid arrangement formed by the hyperplanes $x_i=x_j$ for $i \neq j$. A \emph{generalized permutahedron} is a polytope $P$ obtained from $\Pi_n$ by moving the vertices (possibly identifying some of them) while preserving the edge directions. This is equivalent to requiring that the normal fan of $P$ is a coarsening of the braid arrangement. \cite{PRW}

Every generalized permutahedron is of the form 
\[
P_n(\{z_I\}) = \{(t_1, \ldots, t_n) \in \RR^n : \sum_{i=1}^n t_i =
z_{[n]}, \sum_{i \in I} t_i \leq z_I \textrm{ for all } I
\subseteq [n]\large\}
\]
where $z_I$ is a minimally chosen real number for each $I \subseteq [n]$, and
$z_{\emptyset}=0$. The vector $(z_I)_{I \subseteq [n]}$ is \emph{submodular}; that is, $z_I + z_J \geq z_{I\cup J} + z_{I\cap J}$ for all subsets $I$ and $J$ of $[n]$. Furthermore, this is a bijection between generalized permutahedra and points in the submodular cone in $\RR^{2^n}$ defined by the submodular inequalities. \cite{AA, Morton, Postnikov, Schrijver}.
This shows that generalized permutahedra are essentially the same as \emph{polymatroids}, which predate them.

There is an alternative description of generalized permutahedra. Every Minkowski sum of simplices of the form $\Delta_I$ is a generalized permutahedron \cite{Postnikov} and, conversely, every generalized permutahedron can be expressed as a \emph{signed} sum of such simplices. \cite{ABD} This automatically implies that $O_M$ is a generalized permutahedron. Also, $P_M$ is the generalized permutahedron $P_n(r(I))_{I \subset [n]}$ where $r(I)$ is the rank of $I$ in the matroid $M$. \cite{ABD, Schrijver}

\begin{proof}[Proof of Theorem \ref{thm:polytope}]
For a polytope $P \in \RR^n$ and a linear functional $w \in (\RR^n)^*$, let $P_w$ be the face of $P$ minimized by $w$. 

\medskip
\noindent
(a) Since $O_M$ is a generalized permutahedron, we have $O_M= P_n({z_I})$ for some vector $z_I$. Notice that $\Delta_D = P_n({z^D_I})$ where $z^D_I$ is $1$ if $I \cap D \neq \emptyset$ and $0$ otherwise. Then the result follows from the fact that $P_n(\{z_I\})+P_n(\{z'_I\}) = P_n(\{z_I+z'_I\})$.

\medskip
\noindent
(b) From the Minkowski sum expression for $O_M$ it is clear that the edge directions of $O_M$ are precisely the edge directions of the various $\Delta_D$. These are the vectors of the form $e_c - e_d$ where $c$ and $d$ are in the same cocircuit; that is, in the same connected component of $M^*$. Their span is the subspace given by the equations $\sum_{i \in K_a} x_i = 0$ for the connected components $K_1, \ldots, K_c$ of $M^*$, which are also the connected components of $M$. The result follows.
 
\medskip
\noindent
(c) When $M$ is the matroid of a linear ideal $I$, this claim is related to (but not implied by) Proposition \ref{prop:Grobnerlinear} and the fact that the matroid polytope is a state polytope of $I$. \cite[Proposition 2.11]{Sturmfels} 
We proceed as follows.

We know that $O_M = P_n(\{D(I)\})$ and $P_{M} = P_n(\{r(I)\})$, where $r$ is the rank function of $M$. We claim that $q(I)=D(I) - r(I)$ is a submodular function; it will then follow that $Q = P_n(\{q(I)\})$ is a generalized permutahedron such that $O_M = P_{M}+Q$. 

Let $\delta_q(S, a, b) = -q(S \cup a \cup b) + q(S \cup a) + q(S \cup b) - q(S)$ for $S \subset [n]$ and $a,b \in [n]-S$; define $\delta_D$ and $\delta_{r}$ analogously. We will prove that $\delta_q$ is always non-negative; this property of ``local submodularity" of $q(I)$ implies its submodularity.

Assume contrariwise that $\delta_q(S, a, b) < 0$. Notice that $\delta_D$ and  $\delta_r$ is non-negative because $D$ and $r$ are submodular, and $\delta_{r}$ equals $0$ or $1$ because $r(S \cup s) - r(S) = 0 \textrm{ or } 1$ for $s \notin S$. Therefore, to have $\delta_q(S, a,  b) = \delta_D(S, a, b) - \delta_{r}(S,a,b)<0$, we must have
\begin{equation}\label{eq:submodular}
\delta_D(S, a, b) = 0,  \qquad \delta_{r}(S, a, b) = 1.
\end{equation}
To have $\delta_{r}(S,a,b) = 1$, we must have, for some s, \begin{equation}\label{eq:ranks}
r(S) = s, \qquad r(S \cup a) = r(S \cup b) = r(S \cup a \cup b) = s+1.
\end{equation}
One easily checks that $\delta_D(S, a, b) = 0$ is the number of cocircuits containing $a$ and $b$ and not intersecting $S$. Since hyperplanes are the complements of cocircuits, it follows that every hyperplane $H \supset S$ must contain either $a$ or $b$. If a hyperplane $H \supset S$ contained one and not the other, say $a \in H$ and $b \notin H$, submodularity would imply 
\[
1 = r(H\cup b) - r(H) \leq r(S \cup a \cup b) - r(S \cup a) = 0
\]
by (\ref{eq:ranks}) and the fact that $H$ is a hyperplane. 
Therefore every hyperplane $H \supset S$ must contain both $a$ and $b$, so every hyperplane of $M/S$ contains both $a$ and $b$. This is only possible if $a$ and $b$ have rank $0$ in $M/S$, which contradicts that $r(S \cup a) = r(S) + 1$.
%
%
%

\medskip
\noindent
(d) Since the normal fan of $O_M$ coarsens the braid arrangement, $\{(O_M)_\pi \, : \, \pi \textrm{ is a permutation of } [n]\}$ is a complete list of the vertices of $O_M$, possibly with repetitions.
%
The $\pi$-minimal vertex is
\[
(O_M)_\pi = 
\sum_{D \textrm{ cocircuit of } M} (\Delta_D)_\pi
= \sum_{D \textrm{ cocircuit of } M} e_{\min_\pi(D)}
= (d^\pi_1, \ldots, d^\pi_n)
\]
where
$d^\pi_i$ is the number of cocircuits of $M$ whose $\pi$-smallest element is $i$.
%

Next we observe that the support of any vertex $(O_M)_\pi$ of $O_M$ is a basis of $M$; more specifically,
\begin{equation}\label{eq:supp}
\textrm{supp} (O_M)_\pi = B_\pi
\end{equation}
where $B=B_\pi$ denotes the \emph{$\pi$-minimal basis} of $M$, which minimizes $\sum_{b \in B} \pi(b)$. This basis is unique by the greedy algorithm for matroids. The claim (\ref{eq:supp}) follows from the following variant of the greedy algorithm for matroids due to Tarjan  called the \emph{blue rule}. \cite[Theorem 2.7]{Kozen} To construct the $\pi$-minimum basis $B_\pi$, we start with all elements of $[n]$ uncolored. We successively choose a cocircuit with no blue elements, and color its smallest element blue. We do this repeatedly, in any order, until it is no longer possible. In the end, the set of blue elements is the basis $B_\pi$. Clearly the blue elements are precisely those $i$ such that $d_i^\pi \neq 0$.

Finally, it remains to observe that for each $\pi$, the vertex $(O_M)_{\pi}$ is determined uniquely by $M$ and the relative order of $\pi(B_\pi)$. To see this, notice that $d^\pi_i$ is the number of cocircuits $D$ of $M$ such that $\pi(i)$ is the smallest element of $\pi(B_\pi \cap D)$. This number only depends on the matroid $M$, the basis $B_\pi$, and the relative order of $\pi(B_\pi)$. Since there are $b$ choices for $B_\pi$ and $r!$ choices for the relative order of $\pi(B_\pi)$, the desired result follows.
\end{proof}

\section{\textsf{External activity complexes and the primary decomposition.}}\label{sec:complex}

Let $M$ be a matroid and let $<$ be a linear order on the ground set $E$. We will build a simplicial complex on the $2|E|$-element set $\{\blue{x_e}, y_e \, : \, e \in E\}$ closely related to the basis activities in $M$. 
Basis activities were originally defined by Tutte (for graphs) and Crapo (for matroids) to give a combinatorial interpretation of the coefficients of the Tutte polynomial, as described in Theorem \ref{thm:Crapo}. \cite{Tutte, Crapo} 
Their clever definition manifests itself algebraically in a very natural way iin our work, thanks to  Theorem \ref{longthm cases}(f) and the following result. 

We identify subsets and monomials, and write, for $A, B \subseteq E$,
\[
\blue{x_A}y_B := \{\blue{x_a} \, : a \in A\} \cup \{y_b \, : \, b \in B\}.
\]


\begin{theorem}\label{thm:simplicialcomplex}
Let $M$ be a matroid on $E$ and let $<$ be a linear order on $E$.
There is a simplicial complex $B_<(M)$ on $\{\blue{x_e}, y_e \, : \, e \in E\}$, called the \emph{external activity complex} of $M$ with respect to $<$, such that
\begin{enumerate}
\item
The facets of $B_<(M)$ are the sets $\blue{x_{B \cup EP(B)}}  y_{B \cup EA(B)}$ for each basis $B$, where $EP(B)$ and $EA(B)$ are the sets of externally passive and externally active elements with respect to $B$.
\item
The minimal non-faces are $\blue{x_{\min C}}  y_{C-\min C}$ for each circuit $C$.
\end{enumerate}
\end{theorem}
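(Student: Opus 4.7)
The plan is to define $B_<(M)$ as the simplicial complex whose minimal non-faces are $x_{\min C} y_{C - \min C}$ for each circuit $C$ of $M$; part (2) then holds by construction. To establish (1), I will show that every $F_B := x_{B \cup EP(B)} y_{B \cup EA(B)}$ is a face, and that every face of $B_<(M)$ is contained in some $F_B$. Since the $F_B$ are distinct with common cardinality $|E| + r$, these two facts together force the $F_B$ to be precisely the facets.

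To prove that $F_B$ is a face, I would argue by contradiction: suppose some circuit $C$ with $e := \min C$ satisfies $e \in B \cup EP(B)$ and $C - e \subseteq B \cup EA(B)$. Since $B$ is independent, $C$ must meet $E - B$, and any $f \in (C - e) \cap (E - B)$ lies in $EA(B)$, so $f = \min C(B,f) > e$ and $e \notin C(B,f)$. Weak circuit elimination applied to $C$ and $C(B,f)$ at $f$ produces a circuit $C' \subseteq (C \cup C(B,f)) - f$ with $e \in C'$. One checks that $\min C' = e$ (since every element of $C$ is $\geq e$ and every element of $C(B,f)$ is $\geq f > e$) and $C' - e \subseteq B \cup EA(B)$, so the same hypothesis holds for $C'$ while $|C \cap (E - B)|$ has strictly decreased. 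Iterating, one eventually reaches either a circuit contained in $B$ (contradicting independence of $B$, in the case $e \in B$) or a circuit $C(B,e) \subseteq B \cup e$ with $\min C(B,e) = e$ (contradicting $e \in EP(B)$, in the case $e \in EP(B)$).

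For the containment fact, given a face $G = x_X y_Y$, I would apply Crapo's decomposition theorem (Theorem~\ref{thm:Crapo}) to $Y$ to obtain the unique basis $B$ such that $B - IA(B) \subseteq Y \subseteq B \cup EA(B)$. The inclusion $Y \subseteq B \cup EA(B)$ already places the $y$-part of $G$ inside $F_B$. For the $x$-part, suppose for contradiction $e \in X \setminus (B \cup EP(B)) = X \cap EA(B)$. Then $C := C(B,e) \subseteq B \cup e$ has $\min C = e$, and since $G$ does not contain the non-face $x_e y_{C - e}$, there must exist $f \in (C - e) \setminus Y \subseteq B - Y \subseteq IA(B)$. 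The standard identity $f \in C(B,e) \iff e \in D(B,f)$ (both amount to $(B - f) \cup e$ being a basis) combined with $f = \min D(B,f)$ and $e = \min C(B,e)$ gives $f \leq e \leq f$, so $e = f$; this contradicts $e \notin B$ and $f \in B$.

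The main obstacle I anticipate is the iterative circuit-elimination argument in the first fact: one must carefully maintain the loop invariant ``$\min C = e$ and $C - e \subseteq B \cup EA(B)$'' across each elimination step, track the strictly decreasing quantity $|C \cap (E - B)|$, and handle the two terminal cases $e \in B$ and $e \in EP(B)$ separately with their different contradictions. By contrast, the containment argument collapses to a one-line numerical contradiction once Crapo's decomposition and the fundamental circuit/cocircuit duality $f \in C(B,e) \iff e \in D(B,f)$ are in place.
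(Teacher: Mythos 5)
Your proposal is correct, and your second half (showing every face of $B_<(M)$ is contained in some $F_B$ via Crapo's decomposition applied to $Y$, followed by the fundamental circuit/cocircuit duality $f \in C(B,e) \iff e \in D(B,f)$) is essentially the same argument the paper gives for its backward direction. Your first half, however, takes a genuinely different route. To show that no $x_{\min C}\, y_{C-\min C}$ can sit inside $F_B$, the paper invokes Lemma~\ref{lemma:circuitcocircuit} (a circuit and a cocircuit never meet in exactly one element) applied to $C$ and a well-chosen fundamental cocircuit, which resolves both cases $c\in B$ and $c\in EP(B)$ in a single step each. You instead run an induction that repeatedly applies circuit elimination to shrink $|C\cap(E-B)|$ while preserving the invariant $\min C = e$ and $C-e\subseteq B\cup EA(B)$, terminating at a circuit inside $B$ or at $C(B,e)$. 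Both arguments are valid; the paper's is shorter and avoids the loop invariant bookkeeping, while yours stays entirely within circuit-side reasoning and never appeals to cocircuits or the orthogonality lemma. One correction: the elimination step you invoke, which must produce a circuit $C'\subseteq(C\cup C(B,f))-f$ \emph{containing the chosen element} $e$, is the \emph{strong} circuit elimination axiom, not the weak one; weak elimination alone does not let you keep $e$ in $C'$, and the inductive invariant would break without it.
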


\begin{proof}
We need to prove that, for $S, T \subseteq E$
\begin{center}
$\blue{x_S}y_T \subseteq \blue{x_{B \cup EP(B)}}  y_{B \cup EA(B)}$ for some basis $B$ \\
\smallskip
 if and only if \\
$\blue{x_S}y_T \not\supseteq \blue{x_{\min C}}  y_{C-\min C}$ for all circuits $C$.
\end{center}

First we prove the forward direction. Assume, contrariwise, that $\blue{x_S}y_T \subseteq \blue{x_{B \cup EP(B)}}  y_{B \cup EA(B)}$ for some basis $B$ and $\blue{x_S}y_T \supseteq \blue{x_{\min C}} y_{C-\min C}$ for some circuit $C$. Then
\[
\blue{x_{\min C}}  y_{C-\min C} \subseteq \blue{x_{B \cup EP(B)}}  y_{B \cup EA(B)}.
\]
Let $\min C = c$. Since $c \in B \cup EP(B)$, there are two cases:

If $c \in B$: Let $D=D(B,c)$ be the fundamental cocircuit. Then $c \in C \cap D$ and, since $|C \cap D| \neq 1$, we can find another element $d \in C \cap D$. Since $d \in D(B,c)$, we have $c \in C(B,d)$; and $c<d$ because $c = \min C$, so $d$ is not externally active in $B$. Also, $d \in D(B,c)$ implies that $d \notin B$. Therefore $d \notin B \cup EA(B)$. This contradicts that $C - \min C \subseteq B \cup EA(B)$.

If $c \in EP(B)$: We have $c \notin B$. We can find an element $d \in C(B,c)$ with $d<c$. Now $d \in C(B,c)$ implies $c \in D(B,d)=:D$, so $c \in C \cap D$. Again, this means we can find another $e \in C \cap D$. Since $e \in C$ and $c = \min C$, we have $c<e$, and therefore $d<e$. Now, $e \in D$ implies that $e \notin B$.
Also $e \in D(B,d)$ implies that $d \in C(B,e)$; and $d<e$ then implies that $e \notin EA(B)$. Therefore $e \notin B \cup EA(B)$. Again, this contradicts that $C - \min C \subseteq B \cup EA(B)$. This completes the proof of the forward direction.

\medskip

To prove the backward direction, assume that 
$\blue{x_S}y_T \not\subseteq \blue{x_{B \cup EP(B)}}  y_{B \cup EA(B)}$ for all bases $B$. We need to show that 
$\blue{x_S}y_T \supseteq \blue{x_{\min C}}  y_{C-\min C}$ for some circuit $C$.

By Theorem \ref{thm:Crapo} we can write $T=B \cup E - I$ for some basis $B$, some subset $E \subseteq EA(B)$, and some subset $I \subseteq IA(B)$. Then $T \subseteq B \cup EA(B)$, so $S \not\subseteq B \cup EP(B)$. Therefore we can find $s \in S$ with $s \notin B \cup EP(B)$; that is, $s \in EA(B)$. 

Let $C=C(B,s)$. We claim that $\blue{x_S}y_T \supseteq \blue{x_{\min C}}  y_{C-\min C}$. Since $s \in EA(B)$, $s = \min C$, so $S \supseteq \min C$. It remains to show that $T \supseteq C - \min C$. Assume, contrariwise, that $d \in C-\min C$ but $d \notin T$. Since $d \in C-\min C$, $d \in B$. Since $d \notin T = B \cup E - I$, this implies that $d \in I$, so $d$ is internally active in $B$. Therefore $d$ is the smallest element in $D(B,d)$. But $d \in C(B,s)$ implies that $s \in D(B,d)$, which implies that $s > d$. This contradicts that $s = \min C$. The desired result follows.
\end{proof}

\begin{theorem}\label{thm:primarydecomp}
Let $M$ be a matroid on $E$ and let $<$ be a linear order on $E$. The ideal
\[
C(M,<) = \left< \blue{x_{c_1}} y_{c_2} y_{c_3} \cdots y_{c_k} \, : \, C=\{c_1, \ldots, c_k\} \textrm{ is a circuit of $M$ and $c_1 = \min_<C$} \right>
\]
in $\kk[\blue{x_e}, y_e \, : \, e \in E]$ 
is the Stanley-Reisner ideal of the external activity complex $B_<(M)$. Its primary decomposition is
\[
C(M,<) = \bigcap_{B \textrm{ basis of } M} \left< \, \blue{x_e} \, : \, e  \in EA_<(B)\,  , \, y_e \, : \, e \in EP_<(B) \right>.
\]
\end{theorem}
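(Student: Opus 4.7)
The plan is to deduce both statements directly from Theorem \ref{thm:simplicialcomplex}, using the standard correspondence between Stanley--Reisner ideals and their associated simplicial complexes. The argument has almost no new content beyond bookkeeping about the complement of a facet of $B_<(M)$, so the main task will simply be to verify that the sets match up.

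First I would observe that the generators listed in the definition of $C(M,<)$ are precisely the monomials $\blue{x_{\min C}} y_{C - \min C}$ ranging over circuits $C$ of $M$. By Theorem \ref{thm:simplicialcomplex}(2), these are exactly the minimal non-faces of $B_<(M)$. Since the Stanley--Reisner ideal of a simplicial complex is generated by its (minimal) non-faces, this identifies $C(M,<)$ with $I_{B_<(M)}$.

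Next I would compute the primary decomposition using the standard fact (\cite[Theorem 1.7]{MS}, already invoked in Section \ref{sec:complexes}) that for any simplicial complex $\Delta$ on a vertex set $V$,
\[
I_\Delta = \bigcap_{F \textrm{ facet of } \Delta} \langle v \in V : v \notin F \rangle.
\]
Applying this to $\Delta = B_<(M)$ with $V = \{\blue{x_e}, y_e : e \in E\}$ and the facets $\blue{x_{B \cup EP(B)}} y_{B \cup EA(B)}$ from Theorem \ref{thm:simplicialcomplex}(1), the primary component indexed by a basis $B$ is generated by those $\blue{x_e}$ and $y_e$ not appearing in this facet. The key routine verification is that, because $E - B = EA_<(B) \sqcup EP_<(B)$, we have
\[
E - (B \cup EP_<(B)) = EA_<(B), \qquad E - (B \cup EA_<(B)) = EP_<(B).
\]
Therefore the primary component indexed by $B$ is
\[
\langle \, \blue{x_e} : e \in EA_<(B)\,,\, y_e : e \in EP_<(B)\, \rangle,
\]
which is exactly the component claimed in the theorem. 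Intersecting over all bases $B$ of $M$ then yields the stated primary decomposition.

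I do not anticipate any substantive obstacle: all the combinatorial content has already been packaged into Theorem \ref{thm:simplicialcomplex}, and the remaining step is purely formal. The only minor care needed is to make sure the disjoint partition $E - B = EA_<(B) \sqcup EP_<(B)$ is being used correctly when taking complements of facets inside the doubled vertex set $\{\blue{x_e}, y_e : e \in E\}$.
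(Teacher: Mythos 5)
Your proposal is correct and matches the paper's own (two-line) proof: both derive the Stanley--Reisner identification from the minimal non-faces in Theorem \ref{thm:simplicialcomplex} and then read off the primary decomposition from the facets via \cite[Theorem 1.7]{MS}. Your extra bookkeeping step, checking that the complement of the facet $\blue{x_{B\cup EP(B)}}y_{B\cup EA(B)}$ inside $\{\blue{x_e},y_e:e\in E\}$ is exactly $\{\blue{x_e}:e\in EA(B)\}\cup\{y_e:e\in EP(B)\}$ because $E-B=EA(B)\sqcup EP(B)$, is the right detail to spell out and is implicit in the paper's terse argument.
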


\begin{proof}
By Theorem \ref{thm:simplicialcomplex}.1, $C(M,<)$ is the Stanley-Reisner ideal of $B_<(M)$. Theorem \ref{thm:simplicialcomplex}.2 and \cite[Theorem 1.7]{MS} then imply the desired primary  decomposition.
\end{proof}

The external activity complex and the corresponding monomial ideal are closely related to two important simplicial complexes from matroid theory. If we set $y_i=\blue{x_i}$ we obtain the Stanley-Reisner ideal of the independence complex of $M$, whose facets are the bases of $M$. If we set $\blue{x_i}=1$ we get the Stanley-Reisner ideal of the \emph{broken circuit complex} of $M$, whose facets are the \emph{nbc-bases} of $M$. \cite{Bj}

\section{\textsf{Proofs of our main theorems.}}\label{sec:proofs}

Having built up the necessary combinatorial background, we now use algebraic and geometric tools to complete the proof of our main theorems.

\begin{reptheorem}{longthm cases} Let $L\subset \AA^n$ be a $d$-dimensional linear space and let $\L \subset (\PP^1)^n$ be the closure of $L$ induced by the embedding $\AA^n \subset (\PP^1)^n$. Let $M$ be the matroid of $L$; it has rank $r=n-d$. Then: 
\begin{enumerate}[(a)]
\item The homogenized cocircuits of $I(L)$ minimally generate the ideal $I(\widetilde{L})$.
\item The homogenized cocircuits of $I(L)$ form a universal Gr\"obner basis for $I(\widetilde{L})$, which is reduced under any term order.  
\item The $\mathbb{Z}^n$-multi-degree of $\widetilde{L}$ is
$\sum\limits_{B} t_{b_1}\cdots t_{b_{r}}$ summing over all bases $B = \{b_1,\ldots, b_{r}\}$ of $M$.
\item The bidegree of $\widetilde{L}$ is $t^r h_{M}(s/t)$ where $h_M$ is the $h$-polynomial of $M$.
\item There are at most $r!\cdot b$ distinct initial ideals of $I(\widetilde{L})$, where $b$ is the number of bases of $M$.
\item 
The initial ideal $\initial_< I(\widetilde{L})$ is the Stanley--Reisner ideal of the external activity complex $B_<(M^*)$ of the dual matroid $M^*$. Its primary decomposition is:
$$\initial_< I(\widetilde{L}) = \bigcap_{B \textrm{ basis }} \left< \, \blue{x_e} \, : \, e  \in IA_<(B)\,  , \, y_e \, : \, e \in IP_<(B) \right>$$
where $B = IA_<(B)\,  \sqcup \, IP_<(B)$ is the partition of $B$ into internally active and passive elements with respect to $<$.
\end{enumerate}
\end{reptheorem}

One of our goals is to show that the set $G= \{ f_C^h \}$ of homogenized circuits is a  \emph{universal Gr\"obner basis} (UGB); that is, a Gr\"obner basis for $I(\widetilde{L})$ with respect to any term order.   One key tool is the following:  If two ideals share the same codimension and degree and one contains the other, then under suitably nice conditions we can say they are equal.


\begin{proof}[Proof of (c)] We compute the multi-degree of $\widetilde{L}$ using a geometric argument, recalling the discussion of Section \ref{sec:deg}. For each $\Delta=\{i_1, \ldots, i_r\} \subset [n]$ we let $Z_\Delta = \prod_{i\in \Delta} (\mathbb{P}^1)_i \times \prod_{i\notin \Delta} q_i$, where $q_i$ is a generic point in $(\mathbb{P}^1)_i$ for each $i \notin \Delta$. Then we have
$$\textrm{mdeg } \widetilde{L} = \sum_{\Delta \in {[n] \choose r}} m(Z_\Delta, \widetilde{L}) t_{i_1}\cdots t_{i_r}.$$
where $m(Z_\Delta, \widetilde{L})$ is the number of points in $Z_\Delta \cap \widetilde{L}$ counted with multiplicity. 
We will prove that 
%
%
\begin{equation*}\label{eq:multidegree}
m(Z_\Delta,\widetilde{L}) = \begin{cases}
1 &  \text{if } \Delta \text{ is a basis of $M$, and}\\
0 & \textrm{otherwise,}
\end{cases}
\end{equation*}
from which our formula for $\textrm{mdeg } \L$ will follow.


First let $\Delta$ be a basis., 
Since the points $q_i$ in $Z_\Delta$ are general, we may suppose that $y_i \neq 0$ for $i \notin \Delta$. Now let $i \in \Delta$.
Since $\Delta$ is a basis, there is a cocircuit $D$ containing $i$ whose support is contained in $([n]-\Delta)\cup i$; 
in fact, this is the fundamental cocircuit $D(\Delta, i)$.  If $y_i$ were equal to zero, then the homogenized cocircuit equation $f_D^h = 0$ would force $\blue{x_i} = 0$, which is impossible.  Hence all intersections must occur in the affine patch where no coordinate $y_i$ equals zero; but then we are working in the original affine space, so it is clear that $Z_\Delta \cap \widetilde{L} = Z_\Delta \cap L$ is a single point with no multiplicity. Therefore $m(Z_\Delta, \widetilde{L}) = 1$.

On the other hand, if $\Delta = \{i_1,\ldots,i_r\}$ is not a basis, then there is a cocircuit $D$ which is disjoint from $\Delta$.  This means that $Z_\Delta$ does not meet the hypersurface defined by the homogenized cocircuit $f_{D}^h$.  Hence $\widetilde L$ does not meet $Z_\Delta$ and $m(Z_\Delta, \widetilde{L}) =0$. The desired result follows.
\end{proof}

\begin{proof}[Proof of (b)] 
Let $G=\{f_D^h \, : \, D \textrm{ cocircuit of } M\} \subset I(\widetilde{L})$ denote the set of homogenized cocircuits. 
Let $<$ be any monomial term order on $\kk[\blue{x_1},\ldots,\blue{x_n}, y_1, \ldots, y_n]$, and let $\initial_<G$ denote the ideal generated by the leading terms of the polynomials in $G$.
 We need to show that $\initial_<G = \initial_<I(\widetilde{L})$.
We begin with a remark:
\begin{remark}
If $<$ is any monomial term order, it is sufficient for Gr\"obner computations to assume that $<$ is given by a weight order $w$ on the $2n$ variables  $\blue{x_1},\ldots,\blue{x_n}, y_1,\ldots, y_n$. \cite[Prop. 1.11]{Sturmfels} Since all of the polynomials in $I(\widetilde{L})$ are multi-homogeneous, the term order $w'$ given by 
$$\begin{array}{rcl}w'(\blue{x_i}) &=& w(\blue{x_i}) - w(y_i) \\ w'(y_i) &= &0\end{array}$$
will pick out the same initial terms on $I(\widetilde{L})$ as $w$. Thus we may assume that the weights on the $y$ variables are all zero. The resulting linear order on $1, \ldots, n$ is the reverse\footnote{We reverse it because the initial terms $\initial_<f$ is the largest monomial of $f$, while basis activities are usually defined in terms of the smallest elements of circuits and cocircuits.} of the  linear order that we imposed on $[n]$ in Remark \ref{rem:order}. 
\end{remark}

Notice that each term of a given $f_D^h$ has degree one in the $y$-variables and is homogeneous.  Thus by the remark, the leading term of $f_D^h$ depends only on the linear order on $1, \ldots, n$. Therefore 
\[
\initial_<G = \left< \blue{x_{d_1}} y_{d_2} y_{d_3} \cdots y_{d_k} \, : \, D=\{d_1, \ldots, d_k\} \textrm{ is a cocircuit of $M$ and $d_1 = \min_<D$} \right>.
\]
In other words, 
\[
\initial_<G = C(M^*,<)
\]
is the Stanley-Reisner ideal of the external activity complex $B_<(M^*)$ of $M^*$, as described 
in Theorem \ref{thm:primarydecomp}. Therefore
\begin{equation}\label{eq:D}
\initial_<G = \bigcap_{B \textrm{ basis of } M} \left< \, \blue{x_e} \, : \, e  \in IA_<(B)\,  , \, y_e \, : \, e \in IP_<(B) \right>.
\end{equation}
Applying Proposition \ref{prop:mdeg} to (\ref{eq:D}) then gives
\[
\textrm{mdeg } \initial_<G = \sum\limits_{B \textrm{ basis}} t_{b_1}\cdots t_{b_{r}} = \textrm{mdeg } I(\widetilde{L}). 
\]
We also have that 
\[
 \textrm{mdeg } I(\widetilde{L}) =  \textrm{mdeg } \initial_< I(\widetilde{L}). 
\]
since multi-degree is preserved by flat degenerations. Therefore both ideals in the inclusion
$$\initial_< G \subset \initial_< I(\widetilde{L})$$
 have the same multidegree.   Since the smaller ideal is reduced and equidimensional, this implies that 
 $\initial_< G = \initial_< I(\widetilde{L})$.  \cite[Lemma 1.7.5]{KM}.  Since $<$ was arbitrary, $G=\{f_C^h\}$ is a universal Gr\"obner basis for $I(\widetilde{L})$.    To see that $G$ is reduced for each term order, just notice that no term divides another, because no cocircuit contains another. 
\end{proof}

\begin{proof}[Proof of (f)] 
Now that we know that  $\initial_< G = \initial_< I(\widetilde{L})$, this follows from (\ref{eq:D}).
\end{proof}

\begin{proof}[Proof of (d)] By (f) and Theorem \ref{thm:Crapo}, any initial ideal has bidegree $t^rh_M(s,t)$. The result then follows from  the fact that bidegree is degenerative. \cite{MS}
\end{proof}

\begin{proof}[Proof of (a)] 
Since no term of any generator in $G$ divides any other term, the elements of $G$ are linearly independent over $\k$, and they minimally generate $\left<G\right> = I(\widetilde{L})$. 
\end{proof}

\begin{proof}[Proof of (e)] 
Finally we prove our upper bound for the number of distinct initial ideals of $\initial_< I(\widetilde{L})$. One way to proceed is to invoke \cite[Cor. 2.9]{Sturmfels}: if $G$ is a universal Gr\"obner bases of $I$ which is a reduced Gr\"obner bases with respect to any term order $<$, then the Minkowski sum $\sum_{g \in G} \textrm{New}(g)$ is a state polytope for $I$, so its vertices are in bijection with the initial ideals of $I$. Here $\textrm{New}(g)$ denotes the Newton polytope of $g$. The Newton polytope of each homogenized cocircuit $f_D^h$ is, after translation, equal to $\nabla_I = \conv\{h_i \, : \, i \in I\}$, where $h_d=f_d-g_d$ and $f_1, \ldots, f_n, g_1, \ldots, g_n$ is the standard basis for $\RR^{2n}$. It follows that $\sum_{f_D^h \in G} \textrm{New}(g) =  \sum_{D \textrm{ cocircuit}} \nabla_D$ is affinely isomorphic to $\sum_{D \textrm{ cocircuit}} \Delta_D = O_M$. The result then follows by Theorem \ref{thm:polytope}(d).

We also give a self-contained algebraic proof. If we take an initial ideal of $I(\widetilde{L})$ and set all the $y$-variables equal to $1$, then we obtain an initial ideal of $I(L)$.  Hence we have a map
$$\{\textrm{initial ideals of } I(\widetilde{L})\} \stackrel{\rho}{\to} \{\textrm{initial ideals of } I(L)\}.$$
This map is surjective since the cocircuits and their homogenizations are universal Gr\"obner bases. 

Each initial ideal $\initial_< I(\widetilde{L})$ of $I(L)$ is of the form $I_B = \langle x_{b_1}, \ldots, x_{b_r} \rangle$, where $B=\{b_1,\ldots, b_r\}$ is the $<$-minimum basis of $B$. \cite[Prop. 2.11]{Sturmfels}    Hence the map $\rho$ is surjective onto a set with $b$ elements.  The pre-image of $I_B$ is a set of ideals in the $x$ and $y$ variables.  Now, any term order which determines an ideal in $\rho^{-1}(I_B)$ obviously always selects a term from each homogenized cocircuit with an $x_{b_i}$ for some $i$.  Hence, the relative order of the variables $x_{b_i}$ is sufficient to determine $\initial_< I(\widetilde{L})$.  There are $r!$ such orders, so the number of initial ideals of $I(\widetilde{L})$ is at most $r! \cdot b$.
\end{proof}

\begin{reptheorem}{thm:bettinumbers}
Let $L$ be a linear $d$-space in $\AA^n$, and $I(\widetilde{L})$ the ideal of its closure in $(\mathbb{P}^1)^n$. 
The non-zero multigraded Betti numbers of $S/I(\widetilde{L})$ 
are precisely:
\[
\beta_{i,\a} (S/I(\widetilde{L})) = 
|\mu(F, \widehat{1})|
\]
for each flat $F$ of $M$, where $i=r-r(F)$, and $\a = e_{[n]-F}$.
Here $\mu$ is the M\"obius function of the lattice of flats of $M$.  Furthermore, all of the initial ideals have the same Betti numbers: 
$$\beta_{i,\a} (S/I(\widetilde{L})) = \beta_{i,\a} (S/(\initial_< I (\widetilde{L})))$$
for all $\a$ and for every term order $<$.  
\end{reptheorem}

\begin{proof}
As we already remarked, the initial ideal
\[
\initial_< I(\L) = C(M^*,<) = \left< \blue{x_{d_1}} y_{d_2} y_{d_3} \cdots y_{d_k} \, : \, D=\{d_1, \ldots, d_k\} \textrm{ is a cocircuit of $M$ and $d_1 = \min_<D$} \right>
\]
is closely related to the Stanley-Reisner ideal
\[
I_{IN(M^*)} = \left< x_{d_1} x_{d_2}  \cdots x_{d_k} \, : \, D=\{d_1, \ldots, d_k\} \textrm{ is a cocircuit of $M$}\right>
\]
of the independence complex $IN(M^*)$ of the dual matroid $M^*$. More precisely, the second is obtained from the first by setting $y_i=\blue{x_i}$. In fact, we now show that this substitution is equivalent to taking $C(M^*,<)$ modulo a regular sequence.  This will follow from the primary decomposition of $\initial_< I(\L)$ given by Theorem \ref{thm:primarydecomp}, together with the following lemma:

\begin{lemma}\label{lemma:regsequence}
Let $I$ be a squarefree monomial ideal in $S = \kk[\blue{x_1},\ldots, \blue{x_n}, y_1, \ldots, y_n]$ satisfying
\begin{itemize}
\item[(P1)] For each $i$, no associated prime of $I$ contains both $\blue{x_i}$ and $y_i$, and 
\item[(P2)] No minimal generator of $I$ contains a product of the form $\blue{x_i}y_i$.  
\end{itemize}
Then
$$\{\blue{x_1} - y_1, \ldots, \blue{x_n} - y_n\}$$ is a regular sequence on $S/I$.
\end{lemma}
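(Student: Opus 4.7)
The plan is to proceed by induction on $n$, the number of pairs $\{x_i, y_i\}$. To make the induction run cleanly, I first enlarge the statement slightly: let $T = \kk[x_1, \ldots, x_n, y_1, \ldots, y_n, z_1, \ldots, z_m]$ with $(x_i, y_i)$ distinguished pairs and $z_j$ free, and prove that any squarefree monomial ideal $J$ in $T$ satisfying the analogues of (P1) and (P2) admits $\{x_1 - y_1, \ldots, x_n - y_n\}$ as a regular sequence on $T/J$. The base case $n = 0$ is vacuous. For the first half of the inductive step, I claim $x_1 - y_1$ is a non-zero-divisor on $T/J$: since $J$ is squarefree monomial, its associated primes are its minimal primes, each of the form $(V)$ for some subset $V$ of the variables, and a $\k$-linear combination of variables lies in $(V)$ if and only if each of its monomial terms is divisible by some $v \in V$. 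This forces both $x_1, y_1 \in V$, which (P1) forbids.

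Next, identify the quotient $T_1 := T/(x_1 - y_1)$ with the polynomial ring in $x_1, x_2, \ldots, x_n, y_2, \ldots, y_n, z_1, \ldots, z_m$ via the substitution $y_1 \mapsto x_1$, and let $J_1 = \phi(J)$ be the image of $J$ under $\phi : T \twoheadrightarrow T_1$. By (P2), no generator of $J$ contains the product $x_1 y_1$, so the substitution $y_1 \mapsto x_1$ produces no $x_1^2$ factors and $J_1$ remains squarefree. Property (P2) for the remaining pairs $(x_i, y_i)$ with $i \geq 2$ is inherited immediately, since any minimal generator of $J_1$ containing $x_i y_i$ would lift to a generator of $J$ containing $x_i y_i$. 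Once we also verify (P1) for $J_1$, the inductive hypothesis applied to $J_1$ in $T_1$ yields that $\{x_2 - y_2, \ldots, x_n - y_n\}$ is a regular sequence on $T_1/J_1 \cong T/(J + (x_1 - y_1))$, and composed with the non-zero-divisor $x_1 - y_1$ this completes the induction.

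The main obstacle is verifying (P1) for $J_1$, which I would argue combinatorially via the Stanley--Reisner complexes $\Delta = \Delta(J)$ and $\Delta_1 = \Delta(J_1)$. Suppose for contradiction that a facet $F$ of $\Delta_1$ misses both $x_i$ and $y_i$ for some $i \geq 2$. Lift $F$ to a subset $\widetilde{F}$ of the variables of $T$ by setting $\widetilde{F} = F \cup \{y_1\}$ if $x_1 \in F$, and $\widetilde{F} = F$ otherwise. A direct check using (P2)---that no generator $f$ of $J$ with $y_1 \in \supp(f)$ also contains $x_1$---shows $\widetilde{F}$ is a face of $\Delta$: otherwise $\supp(f) \subseteq \widetilde{F}$ would force $\supp(\phi(f)) = (\supp(f) - \{y_1\}) \cup \{x_1\} \subseteq F$, contradicting that $F$ is a face of $\Delta_1$. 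Extend $\widetilde{F}$ to a facet $F^*$ of $\Delta$; by (P1) for $J$, $F^*$ contains $x_i$ or $y_i$, say $x_i$. A second case check---splitting on whether $x_1 \in F$---then shows that $F \cup \{x_i\}$ is itself a face of $\Delta_1$, contradicting the maximality of $F$. This combinatorial translation of (P1) and (P2) across the substitution is where all the work lies; the remainder of the argument is bookkeeping.
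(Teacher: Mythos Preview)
Your proof is correct and follows the same inductive skeleton as the paper: show $x_1 - y_1$ is regular using (P1), pass to the quotient via the substitution identifying $x_1$ and $y_1$, and verify that (P1) and (P2) persist so that induction applies. Two points of comparison are worth making. First, your enlargement of the statement to allow extra free variables $z_1,\ldots,z_m$ is a genuine improvement in rigor: after one substitution the ring has $n-1$ distinguished pairs plus a leftover variable, and the paper's induction hypothesis as literally stated does not cover this, whereas yours does. Second, the verification of (P1) for the quotient is carried out differently. The paper argues algebraically that the primary decomposition survives the substitution, i.e.\ that $I' = \bigcap P_i'$, by a two-case monomial computation using (P2); from this, (P1) for $I'$ is immediate since each $P_i'$ is the image of a prime already satisfying (P1). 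You instead dualize to the Stanley--Reisner complex and run a lift-and-extend argument on facets. Both arguments hinge on (P2) in the same place (to control what happens to monomials containing $y_1$ under the substitution), and each is about the same length; the paper's version has the mild advantage of identifying the associated primes of the quotient explicitly, while yours stays closer to the combinatorics and avoids the element-chasing in the paper's Case 2.
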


\begin{proof}
Notice that (P1) implies that $\blue{x_1} - y_1$ is a regular element on $S/I$.  We now form the ideal  
$$I' = I\otimes S/(\blue{x_1} -y_1) $$
which we realize as an ideal in the polynomial ring $S/(\blue{x_1})$ via the substitution $\blue{x_1} \mapsto y_1$.  We claim that $I'$ has properties $(P1), (P2)$ and then the proof will be complete by induction.  

First, the minimal generators of $I'$ are precisely the generators of $I$ after the substitution $\blue{x_1} \mapsto y_1$.  Thus (P2) is satisfied.  

Now denote the primary decomposition of $I$ as $ I =  \bigcap P_i.$  Let $P'_i$ denote the ideal obtained from $P_i$ after the substitution $\blue{x_1} \mapsto y_1$.  We claim that 
$$I' = \bigcap P'_i.$$
Substitution is a ring map, and this easily implies the inclusion $I'\subset \bigcap P'_i$. For the opposite conclusion, suppose that $f$ is a minimal generator of $\bigcap P'_i$.  We need to show that $f \in I'$. Notice that $f$ does not involve $\blue{x_1}$.  We have two cases:

Case 1:  $y_1$ does not divide $f$.   In this case, $f \in P'_i$ implies $f \in P_i$ for all $i$, so $f \in I$. But then $f \in I'$ also, 
 since $f$ does not involve $\blue{x_1}$ or $y_1$, the variables that change under our substitution.

Case 2:  $y_1$ divides $f$, say $f=y_1 g$.  Consider the element $h=\blue{x_1}f$.  Since $h$ is divisible by both $\blue{x_1}$ and $y_1$, and since $f$ is in $\bigcap P'_i$, we know $h$ is in fact in each ideal $P_i$.  Thus $h=\blue{x_1} y_1 g\in I$.  But since $I$ has no minimal generators by (P1) divisible by $\blue{x_1}y_1$ we know that either $\blue{x_1}g$ or $y_1g$ must be in $I$.  Under the substitution, both of these elements will be sent to $f$, so that $f\in I'$. 

We conclude that indeed $I' = \bigcap P'_i$, which implies that $I'$ satisfies (P1). This completes the proof by induction.
\end{proof}

With Lemma \ref{lemma:regsequence} at hand, we are now ready to prove Theorem \ref{thm:bettinumbers}. As taking initial ideals is a flat degeneration, we have
\[
\beta_{i,\a} (S/I(\widetilde{L})) \leq \beta_{i,\a} (S/(\initial_< I (\widetilde{L}))).
\]
for all $i$ and $\a$. As explained in Section \ref{sec:freeres}, the only way that this inequality can be strict is due to a consecutive cancellation in the minimal free resolution of $S/\initial_< I(\widetilde{L})$. This requires that $\beta_{i, \a}(S/(\initial_< I(\widetilde{L})))$ is nonzero for some $\a$ and for two consecutive values of $i$.
However, since $\{\blue{x_1}-y_1, \ldots, \blue{x_n}-y_n\}$ is a regular sequence by the previous Lemma, we know that the Betti numbers of $S/(\initial_< I (\widetilde{L}))$ equal those of the independence ideal $\k[x_1, \ldots, x_n]/I_{IN(M^*)}$.   Theorem \ref{thm:onlynonzeroBetti} then shows that no such consecutive cancellations are possible.
\end{proof}

\begin{reptheorem}{CM}
If $L$ is a linear space the ideal $I(\widetilde{L})$ and all of its initial ideals are Cohen-Macaulay. 
\end{reptheorem}

\begin{proof}
An ideal is Cohen-Macaulay if an only if its codimension is equal to its projective dimension.  Since both ideals are of the same codimension and 
\[
\beta_{i,\a} (S/I(\widetilde{L})) = \beta_{i,\a} (S/(\initial_< I (\widetilde{L})))
\]
by Theorem \ref{thm:bettinumbers}, it is sufficient to prove that $\initial_< I (\widetilde{L})$ is Cohen-Macaulay. 

Now, Theorem \ref{thm:bettinumbers} also tells us that the projective dimension of $S/(\initial_< I (\widetilde{L}))$ equals $r$, the rank of $M$. Also since $S/(\initial_< I (\widetilde{L}))$ is the Stanley-Reisner ring of $B_<(M^*)$, whose facets have $2n-r$ elements, its codimension is also $r$. The desired result follows.
%
%
\end{proof}


\section{\textsf{The non-homogeneous case: affine linear spaces.}}\label{sec:affine}
So far, we have assumed that the linear space $L$ was actually a vector subspace of $\kk^n$.  This is a minor assumption, but nonetheless, the nonhomogeneous case has some interesting features. 

In this section, suppose that $L$ is an \textbf{affine} linear subspace defined by the matrix equation $$A\cdot \vec x = \vec b,$$ and let $\widetilde{L}$ be its closure in $(\PP^1)^n$. Now the invariants of $\widetilde{L}$ are controlled by (any two of) the following triple of matroids $(M_{hom}, M, M')$: 

$\bullet$ the matroid $M$ on $[n]$ that we associated to the subspace $A \cdot \vec x = \vec 0$,

$\bullet$ the \emph{augmented matroid} $M_{hom}$ on $[0,n]$ associated to the 
subspace $(A \, | \, (-b)) \cdot \overrightarrow{ (x, x_0) }= \vec 0$,

$\bullet$ the matroid $M'$ on $[n]$ of the subspace obtained from $(A \,  |\,   (-b)) \cdot \overrightarrow{ (x, x_0) }= \vec 0$ by eliminating $x_0$.

\noindent Any two of these matroids determine the third. They are related by 
\[
M = M_{hom} \backslash 0, \qquad M'=M_{hom}/0.
\]
The triple $(M_{hom}, M, M')$ is equivalent to a  \emph{pointed matroid} \cite{Brylawski} or a \emph{semimatroid} \cite{Ar07}.

\subsection{\textsf{Matroid preliminaries: morphisms and Tutte polynomials.}}

The matroids $M$ and $M'$ above can be thought to have the same ground set. They constitute a \emph{morphism of matroids}, denoted $M \rightarrow M'$; this means that   
every flat of $M'$ is a flat of $M$. 
Just as matroids are an abstraction of vector configurations, matroid morphisms are an abstraction of linear maps.


LasVergnas \cite{LasVergnas80} defined the \emph{Tutte polynomial of a morphism $M \rightarrow M'$} to be
\[
T_{M \rightarrow M'}(x,y,z) = \sum_{S \subseteq E} (x-1)^{r'-r'(S)} (y-1)^{|S|-r(S)} z^{rcd_{M,M'}(S)}
\]
where $r$ and $r'$ are the rank functions of $M$ and $M'$, and $rcd_{M,M'}(S) = (r-r') - (r(S) - r'(S))$. 

He also gave an activity interpretation of this polynomial, which we now describe.
For an independent set $X$ of $M$ and $i \notin X$, the set $X \cup i$ contains at most one circuit $C$ of $M$ (which must contain $i$). If $C$ does exist and $i$ is the smallest element in $C$, then we say $i$ is \emph{externally active} with respect to $X$ in $M$.
Dually, for a spanning set $X$ of $M'$, the set $(E-X) \cup i$ contains at most one cocircuit $D$ of $M'$ (which must contain $i$). If $D$ does exist, and $i$ is the smallest element in $D$, then we say $i$ is \emph{internally active} with respect to $X$ in $M'$.

\begin{theorem}\cite{LasVergnas80}\label{th:morphism}
Consider any matroid morphism $M \rightarrow M'$ and any linear order $<$ on the ground set $E$ of $M$ and $M'$. Then
\[
T_{M \rightarrow M'}(x,y,z) = \sum_{S \subseteq E} x^{|IA'(S)|} y^{|EA(S)|} z^{rcd_{M,M'}(S)}
\]
summing over the sets $S$ which are spanning in $M'$ and independent in $M$, where $IA'(S)$ represents the set of internally active elements of $S$ in $M'$, and $EA(S)$ represents the externally active elements with respect to $S$ in $M$. 
\end{theorem}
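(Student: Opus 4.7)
The plan is to generalize the classical activity-expansion proof of Crapo (Theorem \ref{thm:Crapo}) from a single matroid to a matroid morphism. In the single-matroid case, the corank-nullity sum collapses to the activity sum because the Boolean lattice $2^E$ admits a disjoint decomposition into intervals $[B - IA(B),\, B \cup EA(B)]$ indexed by the bases of $M$. The morphism analogue should be a decomposition of $2^E$ indexed by the ``bases of the morphism'', namely the subsets $S \subseteq E$ which are simultaneously independent in $M$ and spanning in $M'$.

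My first step would be to prove the following partition lemma: the intervals
\[
[\, S - IA'(S),\; S \cup EA(S)\,],
\]
as $S$ ranges over the bases of the morphism, partition $2^E$. One does this by a two-phase greedy procedure using the linear order $<$: given $A \subseteq E$, first add smallest elements of $E\setminus A$ to make $A$ spanning in $M'$ (only when doing so strictly increases the $M'$-rank), then remove largest redundant elements to make it independent in $M$. The strong-map condition --- every flat of $M'$ is a flat of $M$, equivalently $r(X) - r'(X) \leq r(Y) - r'(Y)$ for $X \subseteq Y$ --- is precisely what guarantees that the two phases commute and produce a canonical $S(A)$ with $A \in [S(A) - IA'(S(A)),\, S(A) \cup EA(S(A))]$.

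The second step is to evaluate each interval's contribution to the corank-nullity generating function. For a basis of the morphism $S$ and $A = (S \setminus I') \cup E'$ with $I' \subseteq IA'(S)$ and $E' \subseteq EA(S)$, one verifies
\[
r' - r'(A) = |I'|, \qquad |A| - r(A) = |E'|, \qquad rcd_{M,M'}(A) = rcd_{M,M'}(S).
\]
The first two equalities follow from the definitions of internal and external activity (each internally active element $i$ removed drops $r'$ by exactly one, each externally active element $e$ added increases the $M$-nullity by exactly one) together with the strong-map property, which prevents $r'(A)$ from exceeding $r' - |I'|$. The third equality follows because the drops in $r$ and $r'$ along the interval cancel. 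The binomial theorem then collapses the inner sum to
\[
\sum_{I' \subseteq IA'(S)}\sum_{E' \subseteq EA(S)} (x-1)^{|I'|}(y-1)^{|E'|} z^{rcd_{M,M'}(S)} \;=\; x^{|IA'(S)|}\, y^{|EA(S)|}\, z^{rcd_{M,M'}(S)},
\]
which is exactly the summand predicted by the theorem, so summing over $S$ yields the identity.

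The main obstacle will be Step 1. In the single-matroid setting only one rank function enters and Crapo's greedy is transparent; here the greedy must simultaneously respect both $M$ and $M'$, and one must carefully invoke the strong-map inequality to show that the ``span-up in $M'$'' and ``shrink-down in $M$'' operations commute and that distinct bases of the morphism yield disjoint intervals. A cleaner alternative would be induction on $|E|$ by deletion-contraction: both sides of the identity should obey a common recursion whose cases split according to whether $e$ is a loop or coloop in $M$, in $M'$, or ``intermediate'' (e.g.\ a coloop of $M$ but not $M'$, or a loop of $M'$ but not $M$, which are the cases that actually move the $z$-statistic). This bypasses the partition lemma at the cost of carefully tracking how $rcd_{M,M'}$ transforms under each type of deletion or contraction.
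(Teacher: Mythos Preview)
The paper does not prove this theorem; it is quoted from Las Vergnas \cite{LasVergnas80} and used as a black box in the proof of Theorem~\ref{affinelongthm}(d). There is therefore no ``paper's own proof'' to compare your proposal against.

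That said, your outline is essentially Las Vergnas's original strategy: the partition of $2^E$ into intervals $[S - IA'(S),\, S \cup EA(S)]$ indexed by the independent-spanning sets of the morphism is exactly his Proposition~5.1 in \cite{LasVergnas80}, and the binomial collapse in your Step~2 is how he extracts the activity expansion from the corank--nullity sum. Your identification of the strong-map inequality as the mechanism making the two greedy phases compatible is correct; in particular, the key point you would need to nail down is that each $e \in EA(S)$ lies in $\mathrm{cl}_M(S) \subseteq \mathrm{cl}_{M'}(S)$, so adding $E'$ does not disturb $r'$, while removing $I' \subseteq IA'(S)$ drops $r'$ by exactly $|I'|$ because each such element is an isthmus of $M'|_{S \cup E'}$. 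Your deletion--contraction alternative is also viable and appears (in a slightly different form) in Las Vergnas's paper as well. Either route would constitute a correct proof, but since the paper treats this as a citation, no proof is expected here.
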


%
%

\subsection{\textsf{A non-homogeneous example.}}


Before we state and prove our theorems about affine subspaces, we carry out an example in detail which displays most of the interesting features.

\begin{example}\label{non-hom ex}
Consider the affine subspace $L$ of $\AA^6$ given by the linear ideal
\[
I(L)= \left< x_1+x_2+x_6 +a , \,\, x_2-x_3+x_5 + b, \,\, x_3+x_4 + c\right>,
\]
where $a,b,c$ are parameters. 
\end{example}

The matroid $M$ is the same one of Example \ref{ex}, while $M_{hom}$ is the matroid of the ideal 
\[
I(L_{hom})= \left< x_1+x_2+x_6 +a x_0 , \,\, x_2-x_3+x_5 + b x_0, \,\, x_3+x_4 + c x_0\right>.
\]
in seven variables, defining a linear space $L_{hom}$ in $\AA^7$. We can also see $M_{hom}$ as the matroid obtained by adding the point $(a,b,c)$ to our point configuration of columns. 

From now on we assume  $(a,b,c)=(1,0,1)$. Figure \ref{fig:points2} shows the enlarged point configuration, the original point configuration, and the contracted point configuration. They correspond, respectively, to the matroids $M_{hom}$, $M$, and $M'$.

\begin{figure}[ht]
 \begin{center}
  \includegraphics[scale=1]{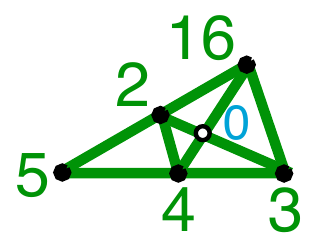}
  \qquad \qquad \qquad 
  \includegraphics[scale=1]{dualvectorconfig} 
  \qquad \qquad \qquad \quad
  \includegraphics[scale=1]{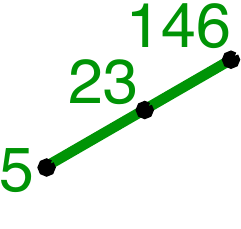}
  \caption{ \label{fig:points2} 
The triple $(M_{hom}, M, M')$ corresponding to the affine subspace $L$.
}
 \end{center}
\end{figure}

Notice that every basis of $M$ is still a basis in $M_{hom}$. Every cocircuit of $M$ gives rise to a cocircuit of $M_{hom}$ by adding $0$ if necessary. As a partial converse, every cocircuit of $M_{hom}$ contains a cocircuit of $M$. Furthermore, if $D$ is a cocircuit of $M_{hom}$ containing $0$, then $D - 0$ is a cocircuit of $M$.
In our example the cocircuits are:
\begin{eqnarray*}
\D &=& \{126,\,\, 1356,\,\, 1456,\,\, 235,\,\, 245, \,\,34\} \\
\D_{hom} &=& \{0126, 01356, 1456, 235, 0245, 034, 12346\} 
\end{eqnarray*}

We will show in Theorem \ref{affinelongthm} that the six \emph{homogenized cocircuits} of $I(L)$, which correspond to the cocircuits $\D$ of $M$, minimally generate $I(\widetilde{L})$ and give a universal Gr\"obner basis:
\[
I(\widetilde{L}) = \left< \bblue{x_1}y_2y_6+y_1\bblue{x_2}y_6+y_1y_2\bblue{x_6}+1y_1y_2y_6, \,\, 
 \ldots,\,
\bblue{x_3}y_4+y_3\bblue{x_4}+1y_3y_4
\right>.
\]
Some of the invariants of $I(\widetilde{L})$ depend only on $M$ as before.
The multidegree of $\widetilde{L}$ is still given by the thirteen bases of the matroid $M$. The multigraded Betti numbers also stay the same as before. 

On the other hand, the initial ideals of $I(\widetilde{L})$ depend on the augmented matroid $M_{hom}$ as well. For  $w(\bblue{x_1}/y_1)> \cdots> w(\bblue{x_6}/y_6) > 0$, the initial ideal $\initial_< I(\widetilde{L})$ is the same as in the homogeneous case:
\begin{eqnarray*}
\initial_< I(\widetilde{L}) &=& \left<\bblue{x_1}y_2y_6, \,\, \bblue{x_1}y_3y_5y_6, \,\,\bblue{x_1}y_4y_5y_6, \,\, \bblue{x_2}y_3y_5, \,\,\bblue{x_2}y_4y_5, \,\,\bblue{x_3}y_4
\right>
\\\
&=& 
\left<\bblue{x_1},\bblue{x_2},\bblue{x_3}\right> \, \cap \, 
\left<\bblue{x_1},\bblue{x_2},y_4\right> \, \cap \, 
\left<\bblue{x_1},y_3,y_4\right> \, \cap \, 
\left<\bblue{x_1},\bblue{x_3},y_5\right> \, \cap \, 
\left<\bblue{x_1},y_4,y_5\right> \, \cap \, \\
&&
\left<y_2,y_3,y_4\right> \, \cap \, 
\left<y_2,\bblue{x_3},y_5\right> \, \cap \, 
\left<x_2,\bblue{x_3},y_6\right> \, \cap \, 
\left<y_2,y_4,y_5\right> \, \cap \, 
\left<\bblue{x_2},y_4,y_6\right> \, \cap \, \\
&&
\left<y_3,y_4,y_6\right> \, \cap \, 
\left<\bblue{x_3},y_5,y_6\right> \, \cap \, 
\left<y_4,y_5,y_6\right>.
\end{eqnarray*}
However, if $0>w(\bblue{x_1}/y_1)> \cdots > w(\bblue{x_6}/y_6)$, we have
\begin{eqnarray*}
\initial_< I(\widetilde{L}) 
&=& \left<y_1y_2y_6, \,\, y_1y_3y_5y_6, \,\,\bblue{x_1}y_4y_5y_6, \,\, \bblue{x_2}y_3y_5, \,\, y_2y_4y_5, \,\, y_3y_4
\right>
\\\
&=& 
\left<\bblue{x_1},y_2, y_3\right> \, \cap \, 
\left<y_1,\bblue{x_2},y_4\right> \, \cap \, 
\left<y_1,y_3,y_4\right> \, \cap \, 
\left<y_1,y_3,y_5\right> \, \cap \, 
\left<y_1,y_4,y_5\right> \, \cap \, \\
&&
\left<y_2,y_3,y_4\right> \, \cap \, 
\left<y_2,y_3,y_5\right> \, \cap \, 
\left<y_2,y_3,y_6\right> \, \cap \, 
\left<y_2,y_4,y_5\right> \, \cap \, 
\left<\bblue{x_2},y_4,y_6\right> \, \cap \, \\
&&
\left<y_3,y_4,y_6\right> \, \cap \, 
\left<y_3,y_5,y_6\right> \, \cap \, 
\left<y_4,y_5,y_6\right>.
\end{eqnarray*}
We will see that the 13 primary components still correspond to the 13 bases of $M$. However, in the primary component $\left<z_i \, : \, i \in B \right>$, we have $z_i =  \bblue{x_i}$ if $i$ is internally active in $B$ \textbf{as a basis of $M_{hom}$}, and $z_i=y_i$ otherwise.

Notice that, in contrast with the linear case, $I(\widetilde{L})$ is no longer bihomogeneous under the bigrading $\bideg \bblue{x_i} = (1,0)$ and $\bideg y_i = (0,1)$. However, some initial ideals still have interesting bidegrees. 
For any term order with $w(\bblue{x_1}/y_1), \cdots, w(\bblue{x_6}/y_6)>0$, we saw in Example \ref{ex} that the bidegree of $\initial_< I(\widetilde{L})$ is $s^3+3s^2t+5st^2+4t^3$. This is essentially the $h$-polynomial of $M$.
We will also show that for any term order with $0>w(\bblue{x_1}/y_1), \cdots, w(\bblue{x_6}/y_6)$, the bidegree of $\initial_< I(\widetilde{L})$ is $3st^2+10t^3$. It is not obvious that all these initial ideals should have the same bidegree; this will follow from the fact that this polynomial is an evaluation of the Tutte polynomial of the matroid morphism $M \rightarrow M'$.

The number of initial ideals also depends on $M_{hom}$.
%
Table \ref{table} shows these numbers for five choices of $(a,b,c)$. In Figure  \ref{fig:points} they correspond, respectively, to adding point $0$ as a loop, as the intersection of lines $23$ and $146$, as a generic point on line $24$, as a generic point on line $136$, or as a generic point in the plane. Somewhat surprisingly, a special choice of $(a,b,c)$ can lead to more initial ideals for $I(\widetilde{L})$ than a generic choice.

\begin{table}[h]
\centering
\begin{tabular}{|c|cc|}
\hline
$(a,b,c)$ &  number of initial \,\,\,\, & number of initial  \\
& ideals of $I(\widetilde{L})$\,\,\,\, & ideals of $I(\widetilde{L}_{hom})$ \\
\hline 
$(0,0,0)$ &  72 & 72 \\
$(1,0,1)$ & 124 & 144 \\
$(2,2,3)$ & 114 & 156\\
$(1,-1,1)$ & 111 & 150 \\
$(1,2,3)$ & 107 & 162 \\
\hline
\end{tabular}
\caption{Number of initial ideals for various choices of $(a,b,c)$\label{table}}
\label{table of inits}
\end{table}

For homogeneous linear spaces $L$, we proved that the number of initial ideals of $I(\widetilde{L})$ is at most $r! \cdot b$ where $r=n-d$ is the codimension of $L$ and $b$ is the number of bases of $M(L)$. This bound is visibly false in the non-homogeneous case, as shown in Table \ref{table of inits}. Instead, we will prove a bound of $r! \cdot b_{hom}$, where  $b_{hom}$ is the number of bases of $M_{hom}$.


The following theorem is the affine analog of Theorem \ref{longthm cases}.

\begin{theorem}\label{affinelongthm} Let $L\subset \AA^n$ be a $d$-dimensional \textbf{affine} space and let $\L \subset (\PP^1)^n$ be the closure of $L$ induced by the embedding $\AA^n \hookrightarrow (\PP^1)^n$. Let $(M_{hom}, M, M')$ be the triple of matroids of $L$.
%
Then: 
\begin{enumerate}[(a)]
\item The homogenized cocircuits of $I(L)$ minimally generate the ideal $I(\widetilde{L})$.
\item The homogenized cocircuits of $I(L)$ form a universal Gr\"obner basis for $I(\widetilde{L})$, which is reduced under any term order.  
\item The $\mathbb{Z}^n$-multi-degree of $\widetilde{L}$ is
$\sum\limits_{B} t_{b_1}\cdots t_{b_{r}}$ summing over all bases $B = \{b_1,\ldots, b_{r}\}$ of $M$.
\item The bidegree of the ideal $I(\widetilde{L})$ is not well-defined unless $L$ is a linear subspace. However: 

1. For every term order $<$ with $x_i > y_i$ for all $i$,
\[
\bideg \initial_< I(\widetilde{L}) = t^r h_{M}(s/t),
\]
where $h_M(x) = T_M(x,1)$ is the $h$-polynomial of $M$ and $T_M(x,y)$ is its Tutte polynomial.

2. For every term order $<$ with $x_i < y_i$ for all $i$,
\[
\bideg \initial_< I(\widetilde{L}) = t^rT_{M \rightarrow M'}(s/t,1,0),
\]
where 
$T_{M \rightarrow M'}(x,y,z)$ is the Tutte polynomial of the matroid morphism $M \rightarrow M'$.
\item There are at most $r!\cdot b_{hom}$ distinct initial ideals of $I(\widetilde{L})$, where $b_{hom}$ is the number of bases of $M_{hom}$.
\item 
The primary decomposition of an initial ideal $\initial_< I(\widetilde{L})$ is given by:
$$\initial_< I(\widetilde{L}) = \bigcap_{B \textrm{ basis of } M} \left< \, \blue{x_e} \, : \, e  \in IA^{hom}_<(B)\,  , \, y_e \, : \, e \in IP^{hom}_<(B) \right>$$
where $B = IA^{hom}_<(B)\,  \sqcup \, IP^{hom}_<(B)$ is the partition of $B$ into internally active and passive elements with respect to $<$, \emph{when regarded as a basis of $M_{hom}$}.
\end{enumerate}
\end{theorem}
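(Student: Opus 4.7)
My plan follows the architecture of the proof of Theorem \ref{longthm cases}, using the linear cone $L_{hom} \subset \AA^{n+1}$ as a bridge via the identification $\widetilde{L} \cong \widetilde{L}_{hom} \cap V(x_0 - y_0)$ in $(\PP^1)^{n+1}$; informally, $I(\widetilde{L})$ is obtained from $I(\widetilde{L}_{hom})$ by specializing $x_0 = y_0 = 1$. This reduces most statements to the homogeneous Theorem \ref{longthm cases} applied to $L_{hom}$, with $M_{hom}$ governing the initial-ideal structure and $M$ governing the generators.

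For (c) I would repeat the geometric argument of Theorem \ref{longthm cases}(c) verbatim: the affine homogenized cocircuit $f_D^h = \sum_{i \in D} a_i x_i \prod_{j \in D,\, j \neq i} y_j + c_D \prod_{i \in D} y_i$ vanishes on $\widetilde{L}$, and for $|\Delta| = r$ a basis of $M$ the fundamental cocircuits pick out a unique reduced point in the affine patch of $Z_\Delta \cap \widetilde{L}$; when $\Delta$ is not a basis, some cocircuit $D$ of $M$ is disjoint from $\Delta$ and $f_D^h$ evaluates to a generic nonzero constant on $Z_\Delta$, forcing empty intersection.

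For (a), (b), and (f) I would compute the leading term of $f_D^h$ under an arbitrary term order after the multihomogeneous reduction $w(y_i)=0$. It equals $x_{i^*}\prod_{j \in D,\, j \neq i^*} y_j$ with $i^* = \arg\max_{i \in D} w(x_i)$, except when $\max_{i \in D} w(x_i) < 0$ and $c_D \neq 0$, in which case it is the constant monomial $\prod_{i \in D} y_i$. Extending $<$ to a linear order on $[0,n]$ by placing $0$ between the indices with $w(x_i) > 0$ and those with $w(x_i) < 0$, this leading monomial coincides with the specialization at $x_0 = y_0 = 1$ of the leading term of the corresponding homogenized cocircuit $f_{\widetilde D}^h$ of $M_{hom}$, where $\widetilde D = D$ if $c_D = 0$ and $\widetilde D = D \cup \{0\}$ otherwise. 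Applying Theorem \ref{thm:primarydecomp} to $M_{hom}$ and noting that primary components indexed by bases $B'$ of $M_{hom}$ containing $0$ become trivial after specialization (since $z_0 \in \{x_0, y_0\}$ is set to $1$), I obtain
\[
\initial_{<} G \;=\; \bigcap_{B \text{ basis of } M} \bigl\langle \, x_e : e \in IA^{hom}_<(B),\ y_e : e \in IP^{hom}_<(B)\, \bigr\rangle.
\]
Proposition \ref{prop:mdeg} then gives $\textrm{mdeg}(\initial_{<} G) = \sum_B t_{b_1}\cdots t_{b_r}$, which equals $\textrm{mdeg}(\initial_{<} I(\widetilde{L}))$ by (c). Equidimensionality and reducedness force equality $\initial_{<} G = \initial_{<} I(\widetilde{L})$, proving (b) and (f); part (a) then follows since distinct cocircuits of $M$ have distinct multidegrees and no leading term divides another.

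For (d), part (f) gives $\bideg(\initial_{<} I(\widetilde{L})) = \sum_B s^{|IA^{hom}(B)|} t^{|IP^{hom}(B)|}$. In Case 1 ($x_i > y_i$ for all $i$, $0$ placed last), appending $0$ to a fundamental cocircuit never shifts its $<$-minimum, so $IA^{hom}(B) = IA^M(B)$ and Theorem \ref{thm:Crapo} gives $t^r h_M(s/t)$. In Case 2 ($y_i > x_i$ for all $i$, $0$ placed first), a direct matroid calculation identifies $M_{hom}$-internal activity with morphism internal activity: $D_{M'}(B, e)$ exists and equals $D_M(B, e)$ iff $c_{D_M(B,e)} = 0$, and the $<$-minimum condition coincides; since every basis of $M$ satisfies $rcd_{M, M'} = 0$ and spans $M'$, Theorem \ref{th:morphism} evaluated at $(s/t, 1, 0)$ yields $t^r T_{M \to M'}(s/t, 1, 0)$. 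For (e) I would adapt the cocircuit polytope argument: the Newton polytope of $f_D^h$ is affinely $\Delta_{\widetilde D}$ up to translation, so the Minkowski sum $\sum_D \textrm{New}(f_D^h)$ is affinely isomorphic to $O_{M_{hom}}$, and Theorem \ref{thm:polytope}(d) applied to $M_{hom}$ bounds the number of vertices, and hence the number of initial ideals, by $r! \cdot b_{hom}$. The main technical obstacle is the primary-decomposition identity above; since specialization does not commute with initial ideals in general, I must verify term-by-term that each leading monomial of a homogenized cocircuit of $M_{hom}$ specializes precisely to the leading monomial of the corresponding affine cocircuit, and that the ``extra'' cocircuits of $M_{hom}$ not descending from $M$ (such as $12346$ in Example \ref{non-hom ex}) contribute specialized leading monomials that are automatically divisible by those already in $\initial_{<} G$ and hence redundant.
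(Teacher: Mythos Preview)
Your proposal is essentially the paper's own proof: reduce to the linear case via $L_{hom}$, apply Theorem~\ref{longthm cases} there, and specialize $x_0 = y_0 = 1$. Your treatment of (c), (a), (b), (f), and (d) matches the paper's architecture almost step for step, including the crucial case analysis on the ``extra'' cocircuits of $M_{hom}$ not arising from $M$, which you correctly single out as the main technical obstacle.

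The one place you diverge is (e). The paper argues more simply: by \eqref{eq:local} every initial ideal of $I(\widetilde{L})$ is the specialization at $x_0=y_0=1$ of some initial ideal of $I(\widetilde{L}_{hom})$, so the count is bounded directly by Theorem~\ref{longthm cases}(e) applied to $L_{hom}$. Your polytope route is viable, but your claim that $\sum_D \mathrm{New}(f_D^h)$ is affinely isomorphic to $O_{M_{hom}}$ is not correct: the sum over cocircuits $D$ of $M$ only produces the simplices $\Delta_{\widetilde D}$ for those cocircuits $\widetilde D$ of $M_{hom}$ of the form $D$ or $D \cup 0$, and misses the extra cocircuits of $M_{hom}$ (such as $12346$ in Example~\ref{non-hom ex}) entirely. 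You therefore get only a Minkowski sub-sum of $O_{M_{hom}}$, followed by the projection $e_0 \mapsto 0$. The bound still follows, since both operations can only coarsen the normal fan, but this should be stated rather than the incorrect isomorphism claim.
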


\begin{remark}\label{rem:order}
Again, a remark is in order about the choice of order in Theorem \ref{longthm cases}(d,f). Now an initial ideal $\initial_< I(\widetilde{L})$ is determined by the relative order of $0$ and the weights of $\blue{x_1}/y_1, \ldots, \blue{x_n}/y_n$. We then assign the opposite order $<$ to $0, 1, \ldots, n$ in $M$ and $M_{hom}$, and this is the linear order $<$ with respect to which $IA_<^{hom}(B)$ and $IP_<^{hom}(B)$ are defined.
\end{remark}

\begin{proof}[Proof of (c).]
The proof of Theorem \ref{longthm cases}(c) carries through unchanged to show that 
$$\mathrm{mdeg}\  \widetilde{L} =  \sum_{b\in B} t_{b_1}\cdots t_{b_k}$$
where the sum is taken over all bases $b = \{b_1,\ldots, b_k\}$ of $M(L)$. 
\end{proof}

\begin{proof}[Proof of (a).] Let
$$\mathcal{D}^h  = \{ f_D^h \ | \ D \in \mathcal{D} \}$$
be the set of homogenized cocircuits of $I(L)$. To see that it is a minimal generating set for $I(\widetilde{L})$, we notice that under the lexicographic monomial order
$\blue{x_1}>\cdots >\blue{x_n} > y_1 > \cdots > y_n$, the initial terms of each
element of $\mathcal{D}^h$ are independent of $\vec b$.  In fact,
they are the same as the leading terms in the case when $\vec b = 0$.
Hence the ideal these monomials generate has a primary decomposition
given by Theorem \ref{longthm cases}(f).  Then the argument of Theorem \ref{longthm cases}(b) shows that $\mathcal{D}^h$ is a Gr\"obner basis under \textbf{this} term order, and in particular it generates $I(\widetilde{L})$. The argument of Theorem \ref{longthm cases}(a) then shows that $\mathcal{D}^h$
is indeed a minimal generating set for $I(\widetilde{L})$. 
\end{proof}

\begin{proof}[Proof of (f).]
Let $<$ be a monomial term order on $\kk[\blue{x_1},\ldots,\blue{x_n},y_1,\ldots,y_n]$. Say $<$ is given by a weight vector on $\blue{x_1}, \ldots, \blue{x_n}, y_1, \ldots, y_n$. Redefining the weights to be $w'(\blue{x_i}) = w(\blue{x_i}) - w(y_i)$ and $w'(y_i)=0$ for all $i$ does not affect the leading terms of polynomials in $I(\widetilde{L})$. This allows us to assume that
$$
w(\blue{x_1}) > w(\blue{x_2})> \cdots >w(\blue{x_n}), \qquad w(y_i) = 0 \mbox{ for all } i.$$
We extend $<$ to a term order on $\kk[\blue{x_0},\ldots,\blue{x_n},y_0,\ldots,y_n]$ by assigning $w(\blue{x_0}) = w(y_0) = 0.$  This  ensures that the initial term computations in $I(\widetilde{L}_{hom})$ mimic exactly those in $I(\widetilde{L})$.



Now let $\mathcal{D}_{hom}^h  = \{ g_D^h \ | \ D \in \mathcal{D}_{hom} \}$ be the set of homogenized cocircuits of $I(\widetilde{L}_{hom})$. This is a universal Gr\"obner basis for $I(\widetilde{L}_{hom})$ by Theorem \ref{longthm cases}(b); let
\[
J:=\initial_< \mathcal{D}^h_{hom} = \initial_< I(\widetilde{L}_{hom}).
\]
We claim that
\begin{equation}\label{eq:local}
\initial_< I(\widetilde{L}) = J(\blue{x_0} = 1, y_0 = 1).
\end{equation}

First notice that any $f \in I(\widetilde{L})$ can be further ``bi-homogenized" to a polynomial $f' \in I(\widetilde{L}_{hom})$ which is homogeneous in the $x$ variables and in the $y$ variables, by multiplying each monomial by a suitable factor of $\blue{x_0}$ or $y_0$. Then one easily checks that 
$\initial_< f  = \initial_< f' |_{\blue{x_0} = y_0 = 1}$. This shows that $\initial_< I(\widetilde{L}) \subset J(\blue{x_0} = 1, y_0 = 1).$

To show the other inclusion it suffices to show that that $J(\blue{x_0} = 1, y_0 = 1)$ and $I(\widetilde{L})$ have the same multidegree, and we can do that using Proposition \ref{prop:mdeg}. 
The primary decomposition of $J$ has components corresponding to the bases of $M_{hom}$, as described in Theorem \ref{longthm cases}(f). In this primary decomposition, setting $\blue{x_0} = y_0 = 1$ is equivalent to ignoring the components that contain $\blue{x_0}$ or $y_0$, which  correspond to the bases of $M_{hom}$ that contain $0$.  Thus the only components that survive are those that correspond to bases of $M$, and 
\begin{equation}\label{eq:in-nonhom}
J(\blue{x_0 = 1}, y_0 = 1) = \bigcap_{B \textrm{ basis of } M} \left< \, \blue{x_e} \, : \, e  \in IA^{hom}_<(B)\,  , \, y_e \, : \, e \in IP^{hom}_<(B) \right>
\end{equation}
where $B = IA^{hom}_<(B)\,  \sqcup \, IP^{hom}_<(B)$ is the partition of $B$ into internally active and passive elements with respect to $<$, when regarded as a basis of $M_{hom}$. It follows that $\textrm{mdeg } J = \sum_B t_{b_1}\cdots t_{b_r}$ where we sum over all bases $B = \{b_1, \ldots, b_r\}$ of $M$. By (c), this is equal to $\textrm{mdeg } \initial_< I(\widetilde{L}) = \textrm{mdeg } I(\widetilde{L})$. This completes the proof of (\ref{eq:local}), and combining it with (\ref{eq:in-nonhom}) gives (f).
\end{proof}

\begin{proof}[Proof of (b).]
Now, to prove that ${\mathcal{D}}^h$ is a universal Gr\"obner basis for $I(\widetilde{L})$, we need to show that $\initial_< {\mathcal{D}^h}$ generates $\initial_< I(\widetilde{L}) = J(\blue{x_0}=1, y_0=1)$ for any $<$. Take a generator $m$ of $J(\blue{x_0}=1, y_0=1)$; by definition this is the initial term of a homogenized cocircuit $g_D^h$ of $I(\widetilde{L}_{hom})$ after setting $\blue{x_0}=y_0=1$; here 
$D \in \mathcal{D}_{hom}$ is a cocircuit of $M_{hom}$.

Let ${D} = \{d_1< \cdots < d_k\}$ so that $m = \blue{x_{d_1}}y_{d_2} \cdots y_{d_k}|_{\blue{x_0}=y_0=1}$.  
If $0 \in D$, then $D-0$ is a cocircuit of $M$ with initial term $m$, so $m \in  \initial_< {\mathcal{D}^h}$. If $0 \notin D$ and $D$ is also a cocircuit of $M$, then $m \in  \initial_< {\mathcal{D}^h}$ automatically. Finally, assume that $0 \notin D$ and $D$ is not a cocircuit of $M$. 
Then one may verify that there is a cocircuit $D' \subseteq D$ of $M$ containing $d_1$ (which must be its smallest element). 
Therefore $\initial_< f^h_{D'}$ divides $\blue{x_{d_1}}y_{d_2} \cdots y_{d_k}= m$ and $m \in \initial_<{\D^h}$ as desired.

Since $<$ was arbitrary, it follows that $\mathcal{D}$ is a universal Gr\"obner basis for $I(\widetilde{L})$. Again, no term in any polynomial in $\mathcal{D}$ divides another, so $\mathcal{D}$ is reduced under any term order. 
\end{proof}

\begin{proof}[Proof of (e).]
Recall from (\ref{eq:local})  that each initial ideal of $I(\widetilde{L})$ is the localization of an initial ideal of $I(\widetilde{L}_{hom})$ at $\blue{x_0}=1, y_0=1$. Since there are at most $r! \cdot b_{hom}$ such ideals, the  result follows.
%
%
\end{proof}

%

\begin{proof}[Proof of (d).] It follows from (f) that
\begin{equation}\label{eq:bideg}
\bideg \initial_<(\widetilde{I}) = \sum_{B \textrm{ basis of M}} s^{|IA^{hom}(B)|}t^{r-|IA^{hom}(B)|}
\end{equation}
where $IA^{hom}(B)$ is the set of internally active elements of $B$ as a basis of $M_{hom}$.

1. If $0$ is the largest element of $M_{hom}$ then it does not affect the internal activity of any basis. Therefore $IA'(B) = IA(B)$ for all $B$ and $\bideg \initial_<(\widetilde{I}) = t^rh_M(s/t)$ by Theorem \ref{thm:Crapo}.

2. Suppose $0$ is the smallest element of $M_{hom}$.
From Theorem \ref{th:morphism} it follows easily that
\[
T_{M \rightarrow M'}(x,y,0) = \sum_{B \textrm{ basis of M}} x^{|IA'(B)|} y^{|EA(B)|}, 
\]
so it remains to show that $IA'(B) = IA^{hom}(B)$ for any basis $B$ of $M$. We prove both inclusions.

Let $i \in IA'(B)$. Then there is a cocircuit $D \subseteq (E-B) \cup i$ of $M'$ whose smallest element is $i$. Now, every cocircuit of $M'=M_{hom}/0$ is a cocircuit of $M_{hom}$, so $i$ is the minimum in the unique cocircuit $D \subseteq (E-B) \cup i$ of $M_{hom}$. Therefore $i \in IA^{hom}(B)$.

Let $i \in IA^{hom}(B)$. Then $i$ is the minimum element in the unique cocircuit $D \subseteq (E-B) \cup i$ of $M_{hom}$. Since $0<i$, we must have $0 \notin D$. But every cocircuit of $M_{hom}$ not containing $0$ is also a cocircuit of $M_{hom}/0=M'$, and hence $i$ is minimum in the unique cocircuit $D \subseteq (E-B) \cup i$ of $M'$. Therefore $i \in IA'(B)$. The desired result follows.
\end{proof}

\begin{theorem}\label{thm:affinebettinumbers}
Let $L$ be an affine linear $d$-space in $\AA^n$, and $I(\widetilde{L})$ the ideal of its closure in $(\mathbb{P}^1)^n$. The non-zero multigraded Betti numbers of $S/I(\widetilde{L})$ 
are precisely:
\[
\beta_{i,\a} (S/I(\widetilde{L})) = 
|\mu(F, \widehat{1})|
\]
for each flat $F$ of $M$, where $i=r-r(F)$, and $\a = e_{[n]-F}$.
Here $\mu$ is the M\"obius function of the lattice of flats of $M$.

Furthermore, all of the initial ideals have the same Betti numbers: 
$$\beta_{i,\a} (S/I(\widetilde{L})) = \beta_{i,\a} (S/(\initial_< I (\widetilde{L})))$$
for all $\a$ and for every term order $<$.  
\end{theorem}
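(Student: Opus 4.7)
The plan is to mirror the proof of Theorem \ref{thm:bettinumbers} from the linear case, reducing the problem to a quotient by the regular sequence $\{x_i - y_i\}_{i=1}^n$ that collapses the initial ideal onto the cocircuit Stanley--Reisner ideal $I_{IN(M^*)}$, whose Betti numbers are already understood via Theorem \ref{thm:onlynonzeroBetti}.

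First I would fix a term order $<$ and invoke the primary decomposition from Theorem \ref{affinelongthm}(f):
\[
\initial_< I(\widetilde{L}) \;=\; \bigcap_{B \text{ basis of } M} \left\langle \,\blue{x_e} : e \in IA^{hom}_<(B),\; y_e : e \in IP^{hom}_<(B)\, \right\rangle.
\]
The crucial structural fact is that $IA^{hom}_<(B) \sqcup IP^{hom}_<(B) = B$ even though the activity is computed in $M_{hom}$ rather than in $M$. From this I would check the two hypotheses of Lemma \ref{lemma:regsequence}. Property (P1) holds because each primary component contains $\blue{x_e}$ or $y_e$ (but never both) only for $e \in B$; in particular, no associated prime contains both $\blue{x_i}$ and $y_i$. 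Property (P2) holds because by Theorem \ref{affinelongthm}(b) the minimal generators of $\initial_< I(\widetilde{L})$ are the initial terms of the homogenized cocircuits of $I(L)$, each of the form $\blue{x_{d_1}} y_{d_2}\cdots y_{d_k}$ for a cocircuit $D = \{d_1 < \cdots < d_k\}$ of $M$; no such monomial contains the product $\blue{x_i}y_i$.

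With (P1) and (P2) in place, Lemma \ref{lemma:regsequence} guarantees that $\{\blue{x_i} - y_i\}_{i=1}^n$ is a regular sequence on $S/\initial_< I(\widetilde{L})$. Substituting $y_i \mapsto \blue{x_i}$ collapses each primary component $\langle \blue{x_e} : e \in IA^{hom}(B),\, y_e : e \in IP^{hom}(B)\rangle$ to $\langle \blue{x_e} : e \in B\rangle$, so the quotient is exactly the cocircuit Stanley--Reisner ring $\kk[x_1,\ldots,x_n]/I_{IN(M^*)}$. Theorem \ref{thm:onlynonzeroBetti} then identifies all nonzero multigraded Betti numbers as $|\mu(F,\widehat{1})|$ for flats $F$ of $M$, concentrated in homological degree $r - r(F)$ and multidegree $e_{[n]-F}$. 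Since quotienting by a regular sequence preserves Betti numbers, the same formula holds for $S/\initial_< I(\widetilde{L})$.

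To transfer the result from the initial ideal to $I(\widetilde{L})$ itself, I would use the semicontinuity inequality $\beta_{i,\a}(S/I(\widetilde{L})) \leq \beta_{i,\a}(S/\initial_< I(\widetilde{L}))$ together with the consecutive cancellation principle. Each nonzero Betti number of $S/\initial_< I(\widetilde{L})$ sits in its own multi-degree $e_{[n]-F}$ indexed by a flat $F$ of $M$, so two nonzero Betti numbers in the same multi-degree but neighboring homological degrees cannot occur; this rules out all consecutive cancellations and forces equality $\beta_{i,\a}(S/I(\widetilde{L})) = \beta_{i,\a}(S/\initial_< I(\widetilde{L}))$ for every $<$. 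The main obstacle is conceptual rather than technical: one must verify that the regular sequence argument still applies despite the activity being taken in $M_{hom}$; the decisive point is that this change does not affect which variables appear in the primary components (only which of $\blue{x_e}$ or $y_e$), so (P1) survives intact, and the collapse after the regular sequence quotient lands back on the same matroid-theoretic Stanley--Reisner ring as in the linear case.
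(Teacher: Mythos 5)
Your proposal is correct and takes essentially the same route as the paper: the paper's proof consists of two sentences saying that Theorem \ref{affinelongthm}(f), Lemma \ref{lemma:regsequence}, and the argument of Theorem \ref{thm:bettinumbers} carry over, while you usefully unpack the verification of (P1) and (P2). One small imprecision: you assert that each minimal generator of $\initial_< I(\widetilde{L})$ has the form $\blue{x_{d_1}}y_{d_2}\cdots y_{d_k}$, but in the affine case (see Example \ref{non-hom ex}) an initial term can also be a pure $y$-monomial $y_{d_1}\cdots y_{d_k}$ coming from the constant term of a cocircuit; since every monomial of a homogenized affine cocircuit $f_D^h$ is either $\blue{x_i}\prod_{j\in D,\,j\neq i} y_j$ or $\prod_{j\in D} y_j$, no monomial contains a factor $\blue{x_i}y_i$ and (P2) is still satisfied, so your conclusion stands.
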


\begin{proof}
In view of Theorem \ref{affinelongthm}(f), Lemma \ref{lemma:regsequence} still applies here, and the proof of Theorem \ref{thm:bettinumbers} extends directly from the linear case to the affine case.
\end{proof}

\begin{theorem}
If $L$ is an affine subspace of $\AA^n$, the ideal $I(\widetilde{L})$ and all of its initial ideals are Cohen-Macaulay. 
\end{theorem}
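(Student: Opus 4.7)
My plan is to adapt the proof of Theorem \ref{CM} (the linear case) to the affine setting, using the affine analogs already established: Theorem \ref{affinelongthm}(f) for the primary decomposition and Theorem \ref{thm:affinebettinumbers} for the equality of Betti numbers. All the ingredients from the linear case have direct affine counterparts, so the argument should go through essentially verbatim.

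First I would reduce to a single initial ideal. Recall that an ideal is Cohen--Macaulay iff its codimension equals its projective dimension. By Theorem \ref{thm:affinebettinumbers}, $S/I(\widetilde{L})$ and $S/(\initial_< I(\widetilde{L}))$ have identical multigraded Betti numbers for every term order $<$, so in particular the same projective dimension. Moreover, taking initial ideals is a flat degeneration and preserves codimension. Hence it suffices to exhibit a single term order $<$ for which $\initial_< I(\widetilde{L})$ is Cohen--Macaulay; the Cohen--Macaulayness of $I(\widetilde{L})$ and of every other initial ideal follows.

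Next I would compute codimension and projective dimension of $\initial_< I(\widetilde{L})$ separately. For the codimension, apply Theorem \ref{affinelongthm}(f):
$$\initial_< I(\widetilde{L}) = \bigcap_{B \textrm{ basis of } M} \left\langle \, \blue{x_e} \, : \, e  \in IA^{hom}_<(B)\,  , \, y_e \, : \, e \in IP^{hom}_<(B) \right\rangle.$$
Since every basis $B$ of $M$ has exactly $r$ elements, each primary component is generated by $r$ variables, so $\initial_< I(\widetilde{L})$ is equidimensional of codimension $r$. Equivalently, the associated Stanley--Reisner complex is pure of dimension $2n-r-1$, with facets of size $2n-r$. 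For the projective dimension, Theorem \ref{thm:affinebettinumbers} tells us that the nonzero Betti numbers of $S/I(\widetilde{L})$ (and hence of $S/(\initial_< I(\widetilde{L}))$) occur precisely in homological degrees $i = r - r(F)$ for flats $F$ of $M$, with the maximum $i = r$ achieved at $F = \emptyset$ (where $|\mu(\emptyset,\widehat{1})|$ is the nonzero M\"obius number of $M$, assuming $M$ has no loops; otherwise one handles loops by the usual reduction).

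Since codimension and projective dimension both equal $r$, $\initial_< I(\widetilde{L})$ is Cohen--Macaulay, and by the reduction above so are $I(\widetilde{L})$ and all of its initial ideals. No substantive obstacle arises here: the entire content is in the already-proven affine versions of the multigraded Betti formula and the primary decomposition, which together were engineered to make this corollary immediate.
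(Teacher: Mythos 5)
Your proof is correct and follows exactly the route the paper intends: the paper's own proof of this theorem is the one-line remark that the linear-case argument (Theorem \ref{CM}) carries over, and your proposal simply spells out that transfer, swapping in Theorem \ref{thm:affinebettinumbers} for the Betti number equality and Theorem \ref{affinelongthm}(f) for the primary decomposition to read off codimension $r$. The only cosmetic difference is that you extract the codimension directly from the primary components rather than from the facet sizes of the Stanley--Reisner complex, but these are the same computation.
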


\begin{proof}
The proof of Theorem \ref{CM} applies here as well.
\end{proof}

\section{\textsf{Future directions.}}
\begin{itemize}
\item
What can be said about the closure of a linear space $L \subset \AA^n$ induced by an embedding $\AA^n \hookrightarrow \PP^{a_1} \times \cdots \times \PP^{a_k}$ where $\{a_1, \ldots, a_k\}$ is a partition of $n$?
\item
Is there a common generalization of our results and the recent work of Li \cite{Li}?
\item
We believe the simplicial complex $B_<(M)$  deserves further study. What is its topology? Is it shellable? How is it related to the active order defined by Las Vergnas \cite{LV} and further studied by Blok and Sagan \cite{BlokSagan}? These questions are the subject of an upcoming project.
\item
The Tutte polynomial of a matroid can be described in terms of the interaction of the internal and external activities of the bases of $M$. In that spirit, is there a simplicial complex extending $B_<(M)$ which simultaneously involves the internal and external activities of the bases of $M$? Ideally we would like it to come from a natural geometric construction.
\item
The polynomial $T_{M \rightarrow M'}(x,1,0)$ might deserve to be called the $h$-polynomial of the matroid morphism $M \rightarrow M'$, in light of Theorem \ref{affinelongthm}(d). Does it satisfy some of the properties of the $h$-polynomial of a matroid, which has been studied extensively?
\end{itemize}

\section{\textsf{Acknowledgments.}}
We would like to thank Lauren Williams for organizing an open problem session at UC Berkeley in the Spring of 2013, where this joint project was born. The first author would also like to thank Lauren and UC Berkeley for their hospitality during the academic year 2012-2013, when part of this work was carried out.  The second author is grateful to David Eisenbud, Binglin Li, and Bernd Sturmfels for helpful conversations concerning this project.

\bibliographystyle{amsalpha}
\bibliography{LinearClosures}

\newcommand{\etalchar}[1]{$^{#1}$}
\providecommand{\bysame}{\leavevmode\hbox to3em{\hrulefill}\thinspace}
\providecommand{\MR}{\relax\ifhmode\unskip\space\fi MR }
\providecommand{\MRhref}[2]{%
  \href{http://www.ams.org/mathscinet-getitem?mr=#1}{#2}
}
\providecommand{\href}[2]{#2}
\begin{thebibliography}{GGMS87}

\bibitem[AA11]{AA}
Marcelo Aguiar and Federico Ardila, \emph{The {H}opf monoid of generalized
  permutahedra}, Preprint (2011).

\bibitem[ABD10]{ABD}
Federico Ardila, Carolina Benedetti, and Jeffrey Doker, \emph{Matroid polytopes
  and their volumes}, Discrete Comput. Geom. \textbf{43} (2010), no.~4,
  841--854. \MR{2610473 (2012b:52026)}

\bibitem[Ard07]{Ar07}
Federico Ardila, \emph{Semimatroids and their {T}utte polynomials}, Revista
  Colombiana de Matem\'aticas \textbf{41} (2007), 39--66.

\bibitem[AST13]{AST}
Chris Aholt, Bernd Sturmfels, and Rekha Thomas, \emph{A {H}ilbert scheme in
  computer vision}, Canad. J. Math. \textbf{65} (2013), no.~5, 961--988.
  \MR{3095002}

\bibitem[Bj{\"o}92]{Bj}
Anders Bj{\"o}rner, \emph{The homology and shellability of matroids and
  geometric lattices}, In Matroid applications, Cambridge Univ. Press, 1992,
  pp.~226--283.

\bibitem[Boo12]{B}
Adam Boocher, \emph{Free resolutions and sparse determinantal ideals}, Math.
  Res. Lett. \textbf{19} (2012), no.~4, 805--821. \MR{3008416}

\bibitem[BR13]{BR}
Adam Boocher and Elina Robeva, \emph{Robust toric ideals},
  http://arxiv.org/abs/1304.0603 (2013).

\bibitem[Bry71]{Brylawski}
T.~Brylawski, \emph{A combinatorial model for series-parallel networks}, Trans.
  Amer. Math. Soc. \textbf{154} (1971), 1--22.

\bibitem[BS05]{BlokSagan}
Rieuwert~J. Blok and Bruce~E. Sagan, \emph{Topological properties of activity
  orders for matroid bases}, Journal of Combinatorial Theory, Series B
  \textbf{94} (2005), no.~1, 101 -- 116.

\bibitem[CHT06]{CHT}
Aldo Conca, Serkan Ho{\c{s}}ten, and Rekha~R. Thomas, \emph{Nice initial
  complexes of some classical ideals}, Algebraic and geometric combinatorics,
  Contemp. Math., vol. 423, Amer. Math. Soc., Providence, RI, 2006, pp.~11--42.
  \MR{2298753 (2008h:13035)}

\bibitem[CNG13]{CGD}
Aldo Conca, Emanuela~De Negri, and Elisa Gorla, \emph{Universal {G}roebner
  bases for maximal minors}, http://arxiv.org/abs/1302.4461 (2013).

\bibitem[Cra69]{Crapo}
Henry~H. Crapo, \emph{The {T}utte polynomial}, Aequationes Mathematicae
  \textbf{3} (1969), no.~3, 211--229 (English).

\bibitem[Edm03]{Edmonds}
Jack Edmonds, \emph{Submodular functions, matroids, and certain polyhedra},
  Combinatorial Optimization (Michael Junger, Gerhard Reinelt, and Giovanni
  Rinaldi, eds.), Lecture Notes in Computer Science, vol. 2570, Springer Berlin
  Heidelberg, 2003, pp.~11--26 (English).

\bibitem[Eis95]{E}
David Eisenbud, \emph{Commutative algebra}, Graduate Texts in Mathematics, vol.
  150, Springer-Verlag, New York, 1995, With a view toward algebraic geometry.
  \MR{1322960 (97a:13001)}

\bibitem[GGMS87]{GGMS}
I.M Gelfand, R.M Goresky, R.D MacPherson, and V.V Serganova,
  \emph{Combinatorial geometries, convex polyhedra, and schubert cells},
  Advances in Mathematics \textbf{63} (1987), no.~3, 301 -- 316.

\bibitem[HH11]{HerzogHibi}
J\"urgen Herzog and Takayuki Hibi, \emph{Monomial ideals}, Graduate Texts in
  Mathematics, vol. 260, Springer-Verlag London Ltd., London, 2011.
  \MR{2724673}

\bibitem[Jen]{gfan}
Anders~N. Jensen, \emph{Gfan, a software system for {G}r\"obner fans and
  tropical varieties}, Available at
  http://home.imf.au.dk/jensen/software/gfan/gfan.html.

\bibitem[KM05]{KM}
Allen Knutson and Ezra Miller, \emph{Gr{\"o}bner geometry of {S}chubert
  polynomials}, Ann. of Math. (2) \textbf{161} (2005), no.~3, 1245--1318.
  \MR{2180402 (2006i:05177)}

\bibitem[Koz91]{Kozen}
Dexter Kozen, \emph{The design and analysis of algorithms}, Springer-Verlag,
  New York, 1991.

\bibitem[Li13]{Li}
Binglin Li, \emph{Images of rational maps of projective spaces},
  http://arxiv.org/abs/1310.8453 (2013).

\bibitem[LV80]{LasVergnas80}
Michel Las~Vergnas, \emph{On the {T}utte polynomial of a morphism of matroids},
  Ann. Discrete Math. \textbf{8} (1980), 7--20, Combinatorics 79 (Proc.
  Colloq., Univ. Montr{\'e}al, Montreal, Que., 1979), Part I. \MR{597150
  (81m:05057)}

\bibitem[LV01]{LV}
\bysame, \emph{Active orders for matroid bases}, European J. Combin.
  \textbf{22} (2001), 709--721.

\bibitem[MPS{\etalchar{+}}09]{Morton}
Jason Morton, Lior Pachter, Anne Shiu, Bernd Sturmfels, and Oliver Wienand,
  \emph{Convex rank tests and semigraphoids}, SIAM J. Discrete Math.
  \textbf{23} (2009), no.~3, 1117--1134. \MR{2538642 (2011b:62126)}

\bibitem[MS05]{MS}
Ezra Miller and Bernd Sturmfels, \emph{Combinatorial commutative algebra},
  Graduate Texts in Mathematics, vol. 227, Springer-Verlag, New York, 2005.
  \MR{2110098 (2006d:13001)}

\bibitem[Oxl92]{Oxley}
J.~G. Oxley, \emph{Matroid theory}, Oxford University Press, New York, 1992.

\bibitem[Pee11]{Peeva:Book}
Irena Peeva, \emph{Graded syzygies}, Algebra and Applications, vol.~14,
  Springer-Verlag London Ltd., London, 2011.

\bibitem[Pos09]{Postnikov}
Alexander Postnikov, \emph{Permutohedra, associahedra, and beyond},
  International Mathematics Research Notices (2009), no.~6, 1026--1106.

\bibitem[PRW08]{PRW}
Alex {Postnikov}, Victor {Reiner}, and Lauren {Williams}, \emph{{Faces of
  generalized permutohedra.}}, {Doc. Math., J. DMV} \textbf{13} (2008),
  207--273.

\bibitem[PS06]{PS}
Nicholas Proudfoot and David Speyer, \emph{A broken circuit ring}, Beitr{\"a}ge
  Algebra Geom. \textbf{47} (2006), no.~1, 161--166. \MR{2246531 (2007c:13029)}

\bibitem[Sch03]{Schrijver}
A.~Schrijver, \emph{Combinatorial optimization - polyhedra and efficiency},
  Springer, New York, 2003.

\bibitem[Stu96]{Sturmfels}
Bernd Sturmfels, \emph{Gr{\"o}bner bases and convex polytopes}, University
  Lecture Series, vol.~8, American Mathematical Society, Providence, RI, 1996.
  \MR{1363949 (97b:13034)}

\bibitem[SZ93]{SZ}
Bernd Sturmfels and Andrei Zelevinsky, \emph{Maximal minors and their leading
  terms}, Adv. Math. \textbf{98} (1993), no.~1, 65--112. \MR{1212627
  (94h:52020)}

\bibitem[Tut54]{Tutte}
W.~T. Tutte, \emph{A contribution to the theory of chromatic polynomials},
  Canad. J. Math. \textbf{6} (1954).

\end{thebibliography}

\end{document}